\documentclass[11pt]{article}

\usepackage[T1]{fontenc}
\usepackage{amsmath,amssymb,amsthm,mathtools}
\usepackage{thmtools}
\usepackage[margin=1in]{geometry}
\usepackage{hyperref}
\usepackage{cleveref}
\usepackage[maxnames=99]{biblatex}
\usepackage{graphicx}
\usepackage{xcolor}
\usepackage{stmaryrd}

\usepackage{thm-restate}
\usepackage[blocks]{authblk}
\usepackage{xpatch}
\makeatletter
\xpatchcmd{\thmt@restatable}% 
{\csname #2\@xa\endcsname\ifx\@nx#1\@nx\else[{#1}]\fi}% 
{\ifthmt@thisistheone\csname #2\@xa\endcsname\ifx\@nx#1\@nx\else[{#1}]\fi\else\csname #2\@xa\endcsname\fi}% 
{}{} %
\makeatother

\newtheorem{theorem}{Theorem}[section]
\newtheorem{proposition}[theorem]{Proposition}

\newtheorem{corollary}[theorem]{Corollary}
\newtheorem{lemma}[theorem]{Lemma}

\theoremstyle{definition}
\newtheorem{definition}[theorem]{Definition}
\newtheorem{question}[theorem]{Question}
\theoremstyle{remark}
\newtheorem{remark}[theorem]{Remark}

\newtheorem{observation}[theorem]{Observation}

\newcommand\qto[1][\operatorname{q}]{\to_{#1}}
\newcommand\incspace{\ensuremath{\mathcal L}}
\newcommand\edgekernel{\ensuremath{\mathcal K}}
\newcommand\diagonalSubspace{\ensuremath{\mathcal D}}
\newcommand{\indfunction}[1]{\llbracket #1\rrbracket}

\newcommand{\C}{\mathbb C}
\newcommand\RE{\mathsf{RE}}
\newcommand{\R}{\mathbb R}
\newcommand{\1}{\mathbf 1}
\newcommand{\End}{\operatorname{End}}
\newcommand{\Proj}{\operatorname{Proj}}
\newcommand{\Span}{\operatorname{span}}
\newcommand{\rank}{\operatorname{rank}}
\newcommand{\Gr}{\operatorname{Gr}}
\newcommand{\Tr}{\operatorname{Tr}}

\newcommand{\qbinom}[2]{\genfrac{[}{]}{0pt}{}{#1}{#2}_{q}}
\DeclareMathOperator\qCSP{CSP_q}

\newcommand{\bzero}{\mathbf{0}}

\newcommand{\bb}{\mathbf{b}}

\newcommand{\bi}{\mathbf{i}}
\newcommand{\bj}{\mathbf{j}}

\newcommand{\bu}{\mathbf{u}}
\newcommand{\bx}{\mathbf{x}}
\newcommand{\bv}{\mathbf{v}}
\newcommand{\by}{\mathbf{y}}

\newcommand{\be}{\mathbf{e}}

\newcommand{\bz}{\mathbf{z}}
\newcommand{\N}{\mathbb{N}}
\newcommand{\F}{\mathbb{F}}

\newcommand\MIP{\ensuremath{\mathsf{MIP}^*}}

\newcommand{\vA}{x}
\newcommand{\vB}{y}
\newcommand{\vR}{\mathbf{R}}
\newcommand{\vY}{z}
\newcommand{\vZ}{w}
\newcommand{\KG}{\operatorname{KG}}

\title{Schrijver--Delsarte rigidity in association schemes and undecidability of quantum graph homomorphism}

\author{Lorenzo Ciardo\textsuperscript{\textdagger}, 
Iris Hebbeker\textsuperscript{\textdagger},
Gideo Joubert\textsuperscript{\ddag}, 
Jana Kreiß\textsuperscript{\ddag},
and Antoine Mottet\textsuperscript{\ddag}\\
\textsuperscript{\textdagger}TU Graz\\
\textsuperscript{\ddag}Hamburg University of Technology
}

\date{}

\begin{document}
\maketitle
%${}$
%\vspace{-5em}

\begin{abstract}
\noindent We prove $\RE$-completeness of the quantum homomorphism problem parameterised by
families of graphs derived from the classic metric association schemes. These include Kneser graphs, $q$-Kneser graphs, and the complements of Johnson, Grassmann, and Hamming graphs.
Our proof develops a spectral method for establishing non-contextuality of quantum polymorphisms. It combines an equality analysis of Roberson's bound on the projective packing number in terms of Schrijver's theta with a structural argument inspired by Erd\H{o}s--Ko--Rado theory.
\end{abstract}
\setcounter{page}{0}\thispagestyle{empty}\clearpage

\newpage
\tableofcontents

%\newpage

\section{Introduction}
\label{sec_intro}

Quantum graph homomorphisms provide a rich mathematical framework in which to study quantum nonlocality and contextuality (in particular, strong violations of Bell's inequality) as well as the computational power afforded by entanglement. Given graphs $F$ and $G$, the homomorphism game involves a classical verifier and two cooperating, computationally unbounded players who seek to convince the verifier that $F$ admits a homomorphism to $G$. The verifier queries the players separately and checks whether their answers are locally consistent with such a homomorphism; the players cannot communicate during the game. A perfect classical strategy exists if, and only if, a homomorphism $F\to G$ exists. Quantumly, however, the players may coordinate their answers by replying to the verifier's queries according to measurements of 
a shared entangled state. In some cases, this  allows them to pass every check with certainty even when no classical homomorphism exists. This phenomenon is known as \textit{quantum pseudo-telepathy}~\cite{galliard2002pseudo,brassard2005quantum}. In this work, we focus on the \textit{finite-dimensional tensor-product oracular} setting, in which perfect quantum strategies for the cooperating provers can be represented by projective measurements over $\C^d$ for some $d\in\N$, indexed by vertices of $F$, whose outcomes are the vertices of $G$, with measurements associated with adjacent vertices commuting~\cite{abramsky2017quantum}.

When both $F$ and $G$ are part of the input, deciding whether a perfect classical strategy $F\to G$ exists is $\mathsf{NP}$-complete. A richer picture emerges in the so-called \textit{non-uniform} setting, where the target graph $G$ is fixed. The Hell--Ne\v{s}et\v{r}il theorem gives a complete dichotomy for this problem: for every finite loopless graph $G$, the problem of deciding whether an input graph admits a homomorphism to $G$ is in $\mathsf P$ when $G$ is bipartite and is $\mathsf{NP}$-complete otherwise \cite{hell1990complexity}.

The quantum side of the complexity landscape of graph homomorphism is far less understood. At the unrestricted level, the $\MIP=\mathsf{RE}$ 
theorem
implies that the entangled problem is $\mathsf{RE}$-complete \cite{ji2021mip}. In the non-uniform setting, however, establishing undecidability requires reductions that remain sound in the presence of noncommuting measurements. At the tractable end, Man\v{c}inska and Roberson proved that quantum homomorphism games with a bipartite target exhibit no pseudo-telepathy~\cite{manvcinska2016quantum}: in this case, quantum feasibility coincides with classical feasibility, and the problem is therefore polynomial-time solvable. At the opposite end, in~\cite{ji2013binary}, Ji introduced the concept of \textit{commutativity gadgets} to lift some of the classical NP-hardness reductions to reductions that remain sound against quantum provers.
Combining this technique with~\cite{slofstra2019set}, Harris proved undecidability of entangled $3$-colouring~\cite{harris2024universality}. 
Culf and Mastel proved that the latter problem is in fact $\RE$-complete by reducing from the protocols in $\MIP=\RE$, and also extended $\RE$-completeness to
a large class of classically hard homomorphism problems  parameterised by Boolean structures as well as so-called non-two-variable-falsifiable structures (which, however, do not capture the setting of loopless undirected graphs)~\cite{culf2025re}.
More recently, Culf, van Dobben de Bruyn, Vernooij, and Zeman constructed oracular commutativity gadgets for $k$-colouring for every $k\geq4$, thus establishing that quantum $k$-colouring is $\RE$-complete for such $k$, too~\cite{culf2025existence}. Their work also singled out odd cycles and odd graphs as natural candidates beyond complete graphs, whose complexity (and, in particular, existence of commutativity gadgets) could not be resolved via the available theory.

A systematic approach to study the question algebraically (in the more general context of homomorphisms between relational structures) was later developed by the first, third, and fifth authors in~\cite{CJM} through the notion of \emph{quantum polymorphisms}. An $m$-ary quantum polymorphism of a graph $G$ is a quantum homomorphism from $G^m$ to $G$,
where $G^m$ denotes the direct power of $G$; i.e., a perfect quantum strategy for the game whose goal is to prove to a verifier the statement that $G^m\to G$. 
This statement is trivially true, so even a classical, honest strategy exists (for instance, any $m$-ary dictator gives one such strategy). Yet, it was shown in~\cite{CJM} that the \textit{space} of these quantum strategies (in particular, the algebraic structure of such space expressed in terms of so-called minor identities) is able to completely determine the complexity of the problem, thus lifting the analogous classical phenomenon underlying the classical CSP dichotomy theory.
Concretely, quantum polymorphisms serve two related purposes. First, they characterise which fixed-template problems admit Ji's commutativity gadgets. Second, they provide an algebraic framework for transferring complexity between nonlocal games, paralleling the role of ordinary polymorphisms in the classical theory of constraint satisfaction problems.

A quantum polymorphism is \emph{non-contextual} if all the projective measurements defining it commute. In this case, the measurements can be simultaneously diagonalised, and the quantum strategy decomposes into a direct sum of classical polymorphisms. The results of~\cite{CJM} imply, in particular, that if $G$ is non-bipartite (i.e., classically NP-complete) and every quantum polymorphism of $G$ is non-contextual, then a categorical power of $G$ serves as a commutativity gadget, and the corresponding quantum homomorphism problem is thus $\RE$-complete. This shifts the main task from constructing an explicit reduction (and a specific commutativity gadget) to proving a commutation statement about quantum operations.
The method used in \cite{CJM} to establish non-contextuality was based on \emph{contextuality bifurcations}. Roughly speaking, noncommutativity among two measurements forces certain non-orthogonality patterns among the projectors of a quantum polymorphism. If these patterns contradict the orthogonality relations required of a perfect strategy, then no contextual quantum polymorphism can exist. This argument, that was sufficient to prove undecidability for odd cycles, is inherently \textit{combinatorial}: the contradiction is obtained from properties of individual neighbourhoods, namely vertex degrees and intersections of neighbourhoods.

In the present work we make progress towards a quantum Hell--Ne\v{s}et\v{r}il classification by developing a complementary, \textit{spectral} approach for achieving non-contextuality of quantum polymorphisms for graphs, based on ideas from spectral graph theory. 
As a consequence of this approach, we prove the following main result.

\begin{theorem}%
\label{thm_RE_completeness_classes}
    Let %
    $G$ be one of the following graphs:
    \begin{itemize}
        \item The Kneser graph $KG(n,k)$ with $n>2k\geq 2$.
        \item The complement $\overline{J(n,k)}$ to the Johnson graph with $n>2k\geq 4$.
        \item The q-Kneser graph $KG_q(n,k)$ with $n>2k\geq 2$ and $q$ a prime power.
        \item The complement $\overline{J_q(n,k)}$ to the Grassmann graph with $n>2k\geq 4$ and $q$ a prime power.
        \item The complement $\overline{H(d,q)}$ to the Hamming graph with $d\geq 2$ and $q\geq 3$.
    \end{itemize}
    Then determining whether an input graph admits a quantum homomorphism to $G$ is $\RE$-complete.
\end{theorem}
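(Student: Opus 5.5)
The plan is to reduce the statement to a non-contextuality result for quantum polymorphisms and then invoke the transfer theorem of~\cite{CJM}. By~\cite{CJM}, if $G$ is non-bipartite and every quantum polymorphism of $G$ (every quantum homomorphism $G^m\to G$, $m\in\N$) is non-contextual, then a categorical power of $G$ acts as a commutativity gadget. Ji's method then lifts the classical $\mathsf{NP}$-hardness reduction, which exists by Hell--Ne\v{s}et\v{r}il, to a reduction sound against entangled provers, and this yields $\RE$-hardness. Membership in $\RE$ is standard: enumerate finite-dimensional projective strategies with algebraic entries, or equivalently search for a certificate. Every listed graph contains a triangle under the stated parameter bounds (for $KG(n,k)$ this needs $n\ge 3k$; otherwise use odd girth), so each is non-bipartite. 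The whole task therefore reduces to proving that all quantum polymorphisms of these graphs are non-contextual.

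For non-contextuality I would use the spectral approach. Each listed $G$ is a union of classes of a metric association scheme (Johnson, Grassmann, or Hamming) and is vertex-transitive. Given a quantum polymorphism $\{P_{\bx,v}\}$, fix a unit vector $\psi$ and, for each $\bx\in V(G^m)$, consider the vectors $P_{\bx,v}\psi$. The orthogonality constraints make $v\mapsto P_{\bx,v}$ and the adjacency structure of $G^m$ give a projective packing of $G^m$ of total rank $\dim$ times $|V(G)|^{m-1}$. Roberson's bound controls the projective packing number by Schrijver's $\vartheta'$ of the complement, and for these schemes Delsarte's LP bound equals $\vartheta'$. That value is the Erd\H{o}s--Ko--Rado bound $\alpha(G)=\binom{n-1}{k-1}$ (respectively its $q$-analogue or $q^{d-1}$), and it is multiplicative over direct powers. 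So the quantum polymorphism attains equality in Roberson's bound.

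The key step, and the main obstacle, is the equality analysis: I need to show that equality forces rigidity, namely that the Gram matrices of the relevant vectors lie in the eigenspaces of the trivial and first eigenvalues of the scheme. This is Schrijver--Delsarte rigidity. I would prove it by complementary slackness in the Delsarte LP. Strict positivity of the dual slack on all higher eigenspaces (which is exactly where the bounds $n>2k$, $k\ge2$ for the complements, and $q\ge3$ enter) kills the other components. Rigidity then says that each projector block behaves like a ``star'' or dictator pattern. Following the EKR-style structural argument, one shows that the only feasible configurations are those in which $P_{\bx,v}$ depends on a single coordinate through canonical maximum independent sets. From that one can deduce that the projectors $P_{\bx,v}$ and $P_{\by,w}$ commute for all $\bx,\by$, first for adjacent ones and then in general by writing each projector as a sum over stars and using transitivity.

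Finally, I would assemble the pieces: non-contextuality for each family, combined with~\cite{CJM}, gives $\RE$-completeness. The degenerate small cases, such as $KG(n,1)=K_n$, are covered directly by~\cite{culf2025existence}, or by the same argument, and $K_3$ by~\cite{culf2025re}.
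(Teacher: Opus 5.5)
Your outer reduction (non-contextuality of all quantum polymorphisms plus the transfer theorem of \cite{CJM}) and your idea of forcing equality in Roberson's bound $\alpha_p\le\vartheta^-$ match the paper. The inner argument, however, has genuine gaps.

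\textbf{The packing does not attain the bound.} For a fixed output $v$, the family $(Q_{\bx,v})_{\bx}$ is a projective packing of $G^m$, but its average value is $|V(G)|^{m-1}$. That is far below $\vartheta^-(G^m)=\alpha(G)|V(G)|^{m-1}$, so nothing is tight. The paper instead aggregates outputs over a uniform cover of $V(G)$ by maximum independent sets $S_p$, the features of a disjointness representation $T$ with every vertex in exactly $r$ of them. It sets $P_{\bx,p}=\sum_{a\in S_p}Q_{\bx,a}$ and uses $\sum_p t_p=r|V(G)|^m$ to force equality for every $p$. At the end it recovers $Q_{\bx,a}=\prod_{p\in T_a}P_{\bx,p}$ by injectivity of $T$.

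\textbf{The bound on powers needs its own certificate.} The statement actually needed is $\vartheta^-(G^m)=\alpha(G)|V(G)|^{m-1}$, via an optimal certificate whose kernel is exactly $\incspace^{[m]}$. This does not follow from any ``multiplicativity'': independence numbers are not multiplicative under direct powers. It requires a tensor-power certificate such as $F_m=\frac{\mu}{\kappa^{m-1}}(B^{\otimes m}+\kappa^{m-1}I)$. The conditions $\kappa>1$ and $-1<\theta_j<\kappa$ are exactly what give strict positivity on the mixed modules $E_{\bj}$.

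\textbf{Wrong subspaces and values for the complement families.} For $\overline{J(n,k)}$, $\overline{J_q(n,k)}$ and $\overline{H(d,q)}$, the rigid subspace is $E_0\oplus\cdots\oplus E_{k-1}$ (respectively $V_0\oplus\cdots\oplus V_{d-1}$), not the trivial plus first eigenspace. Their independence numbers are $n-k+1$, $[n-k+1]_q$ and $q$, not the EKR values you quote.

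\textbf{The main gap: from rigidity to commutation.} Equality in Roberson's bound only gives that $\bx\mapsto P_{\bx,p}$ lies in $\incspace^{[m]}\otimes\End(H)$. That is, $P_{\bx,p}=C_p+\sum_i F_{i,p}(x_i)$ with operator-valued summands; it does not say $P_{\bx,p}$ depends on a single coordinate. The classical argument that a $\{0,1\}$-valued sum of one-coordinate functions is a dictator evaluates pointwise in a common eigenbasis. That presupposes the simultaneous diagonalisability you are trying to prove.

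Your last step also fails. Commutation for adjacent pairs is just the oracular axiom, and it cannot be extended ``by transitivity''. Commutation is not transitive: $[A,B]=[B,C]=0$ does not give $[A,C]=0$. Vertex-transitivity of $G$ does not help either, since it maps one polymorphism to a different one.

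\textbf{What the paper supplies instead.} It first shows that the commutator $(\bx,\by)\mapsto[P_{\bx,p},P_{\by,q}]$ lies in $\edgekernel^{[m]}\otimes\End(H)$. For connected non-bipartite $G$, $\edgekernel^{[m]}=\bigoplus_i\iota_i(\edgekernel)$, and same-point commutation makes each summand constant-diagonal. Then a tameness condition on $\edgekernel$ finishes the argument:
\begin{itemize}
\item Symmetry (R1) forces $[P_{\bx,p},P_{\by,p}]=0$, because the commutator is antisymmetric while the decomposition is symmetric.
\item Injectivity of $\Delta_x$ (R2), combined with the Jacobi identity, gives $[P_{\bx,p},[P_{\bx,p},P_{\by,q}]]=0$. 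Hence $[P_{\bx,p},P_{\by,q}]=0$, since $P_{\bx,p}$ is diagonalisable.
\end{itemize}
Verifying tameness family by family is a substantial part of the proof, and your plan has no counterpart to it. This requires computing each edge kernel, e.g.\ via the cross-sum lemma, the full-column-rank incidence lemma, and the local-support computations for the complements.
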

In particular, this settles the question in~\cite{culf2025existence} on odd graphs.
The starting point of our method is Schrijver's parameter $\vartheta^-$, which gives an upper bound on the independence number of a graph and strengthens the Lov\'{a}sz theta bound~\cite{SchrijverDelsarteLovasz1979}. Crucially, this bound can be \emph{quantised}. Roberson showed that $\vartheta^-(G)$ also bounds the \textit{projective packing number} $\alpha_p(G)$~\cite{roberson2013variations}, which in turn bounds the quantum independence number $\alpha_q(G)$ introduced by Man\v{c}inska and Roberson~\cite{manvcinska2016quantum} as the natural quantum relaxation of the classical independence number. Thus, for every graph $G$,
$$
\alpha(G)\leq\alpha_q(G)\leq\alpha_p(G)\leq\vartheta^-(G).
$$
We also refer to~\cite{wocjan2019spectral} for further quantum extensions of spectral bounds on the independence number.

Our main technical result states that the quantum graph homomorphism problem parameterised by a graph $G$ is $\RE$-complete whenever $G$ meets a specific spectral condition that strengthens the requirement that Schrijver's bound be tight by constraining the shape of a certificate matrix witnessing the tightness of the bound.
If this condition holds, using that, if $\vartheta^-=\alpha$ then $\vartheta^-=\alpha_p$, an equality analysis of Roberson's bound implies that
all quantum polymorphisms of $G$ are non-contextual, so the theory of quantum polymorphisms allows concluding that $G$ admits a commutativity gadget and the corresponding problem is thus $\RE$-complete.
We apply this spectral non-contextuality criterion to graphs derived from symmetric association schemes. The guiding idea is that the semidefinite-programming constraints defining Schrijver-rigidity become more manageable linear-programming constraints in the Bose--Mesner algebra of a scheme; a similar strategy was adopted in~\cite{ciardo2025semidefinite} to study SDP and affine relaxations of approximate graph homomorphism problems. Schrijver proved that, for graphs derived from association schemes, the $\vartheta^-$ bound coincides with Delsarte's LP bound \cite{SchrijverDelsarteLovasz1979}. We prove a rigidity analogue of this result, by showing that a representation of a graph derived from an association scheme is Schrijver-rigid if and only if it is Delsarte-rigid.

In order to show that the equivalent Delsarte LP conditions are met for concrete graph families, we need to have a good picture of the spectral properties of the scheme (in particular, the matrix of eigenvalues, translating between the primal and dual bases of the scheme).
We apply our criterion to graphs arising from the three classical metric association schemes: the \textit{Johnson}, \textit{Grassmann}, and \textit{Hamming} schemes, whose spectral descriptions are well known. In this way, we show that both the maximum-distance graphs and the complements of the distance-one graphs satisfy the non-contextuality condition. This yields $\RE$-completeness for the corresponding quantum homomorphism problems,
thus establishing~\Cref{thm_RE_completeness_classes}.
Using as a black box a result of~\cite{culf2025existence} on the equivalence of oracular and non-oracular commutativity gadgets for $C_4$-free graphs, we also extend $\RE$-completeness to the non-oracular regime for odd graphs.

The next section contains a more detailed overview of our proof strategy. The formal proofs are given in the remaining part of the paper.

\section{Overview of results and techniques}

For a Hilbert space $H$, $\End(H)$ and $\Proj(H)$ denote the sets of 
linear applications and projectors  over $H$, respectively.  All Hilbert spaces appearing in this work are nonzero and
finite-dimensional. 
For a statement $\mathcal P$, we write $\indfunction{\mathcal P}$ for its Iverson bracket, which equals $1$ if $\mathcal P$ is true and $0$ otherwise.
Given a graph $G=(X,E)$ and two vertices $x,y\in X$, we write $x\sim_G y$ (or simply $x\sim y$, if $G$ is clear) to denote that $x$ and $y$ are adjacent.
A \emph{projective measurement} (PVM) over a Hilbert space $H$ with outcome set $B$ is a family $(Q_{b})_{b\in B}$ of projectors on $H$ with the property that $\sum_{b\in B}Q_b = I$.
A \emph{quantum function} from a set $A$ to a set $B$ is a family of PVMs $(Q_{a,b})_{b\in B}$ on a Hilbert space $H$, one PVM for each $a\in A$.

\begin{definition}[\cite{manvcinska2016quantum,abramsky2017quantum}]
\label{def:oracular-quantum-homomorphism}
Let $G=(X,E)$ and $G'=(X',E')$ be graphs. An \emph{(oracular) quantum homomorphism}
$Q\colon G\qto G'$ is
a quantum function $Q\colon X\qto X'$ satisfying the following two conditions:

\begin{itemize}
    \item[$(i)$] $Q_{x_1,x'_1}Q_{x_2,x'_2}=0$ whenever $x_1\sim_G x_2$ and $x_1'\not\sim_{G'} x_2'$;
    \item[$(ii)$]
    $[Q_{x_1,x'_1},Q_{x_2,x'_2}]=0$ for each $x_1',x_2'\in X'$ whenever $x_1\sim_G x_2$.
\end{itemize}
\end{definition}
We write $\qCSP(G')$ for the following decision problem: given a finite graph
$G$ as input, decide whether there exists a quantum
homomorphism $G\qto G'$.
\begin{remark}
The notion above is sometimes referred to as the \textit{oracular} version of quantum homomorphism, in that it requires commutation of the PVMs associated with adjacent vertices to capture oracularisable strategies in the corresponding non-local game, see e.g.~\cite{abramsky2017quantum}. In a \textit{non-oracular} quantum homomorphism, requirement $(ii)$ is dropped~\cite{manvcinska2016quantum}.
\end{remark}
 We refer to~\Cref{subsec_prelimns_multilinear_algebra} for further details on the notation.

\subsection{Schrijver theta and projective packings}

The starting point of our analysis is  Schrijver's theta number, which we define next in the dual SDP formulation.

\begin{definition}[\cite{SchrijverDelsarteLovasz1979}]
\label{defn_schrijver_theta}
A
\emph{dual $\vartheta^-$
certificate for a graph $G$ at value $\beta\in\R$}  is a real symmetric matrix $Q\in\R^{X\times X}$ satisfying
\[
 Q\succeq0,\qquad
 Q_{xx}=\beta-1\quad(x\in X),\qquad
 Q_{xy}\leq-1\quad(x,y\in X,\ x\ne y,\ x\not\sim_Gy).
\]
We let $\vartheta^-(G)$ be the minimum $\beta$ for which a dual $\vartheta^-$ certificate for $G$ at value $\beta$ exists.
\end{definition}
Schrijver showed in~\cite{SchrijverDelsarteLovasz1979} that $\vartheta^-$ upper bounds the independence number $\alpha(G)$ of any graph $G$. 
In this work, we shall be interested in the regime where Schrijver's bound is tight; i.e., $\vartheta^-(G)=\alpha(G)$. 
This regime is interesting in our quantum setting as a consequence of two results, proved by Roberson~\cite{roberson2013variations} and Man\v{c}inska--Roberson~\cite{manvcinska2016quantum}, which sharpen Schrijver's bound to capture non-classical notions of independent sets.
\begin{definition}[\cite{roberson2013variations}]
\label{def:projective-packing}
A \emph{projective packing} of $G$ on a nonzero finite-dimensional
Hilbert space $H$ is a family
$R=(R_x)_{x\in X}\subseteq\Proj(H)$ such that $R_xR_y=0$
holds for all $x\sim_G y$.
Its value is
\[
 \operatorname{val}(R)
   =\frac{1}{\dim H}\sum_{x\in X}\operatorname{rank}(R_x).
\]
The \emph{projective packing number} of $G$ is
\[
 \alpha_p(G)=\sup\{\operatorname{val}(R):
             R\text{ is a projective packing of }G\},
\]
where the supremum is taken over all nonzero finite-dimensional Hilbert
spaces.  
\end{definition}

\begin{definition}[\cite{manvcinska2016quantum}]
    The \emph{quantum independence
number} $\alpha_q(G)$ of a graph $G$ is the maximum $n$ such that there exists a quantum non-oracular homomorphism from the $n$-clique $K_n$ to the complement of $G$.
\end{definition}

It was proved in~\cite{roberson2013variations} that $\vartheta^-$ upper bounds the projective packing number of any graph. 
Furthermore, in~\cite{manvcinska2016quantum}, it was shown that
$\alpha_p(G)$ is an upper bound on $\alpha_q(G)$. Since $\alpha_q(G)\geq\alpha(G)$ (as is clear by considering a 1-dimensional Hilbert space), we have the following chain of inequalities.

\begin{theorem}
[\cite{SchrijverDelsarteLovasz1979,roberson2013variations,manvcinska2016quantum}]
\label{thm_alpha_p_schrijver}
    For every finite graph $G$, $\alpha(G)\leq\alpha_q(G)\leq\alpha_p(G)\leq\vartheta^-(G)$.
\end{theorem}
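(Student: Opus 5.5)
The chain has three links. I would prove each separately, in the order $\alpha\le\alpha_q$, then $\alpha_q\le\alpha_p$, then $\alpha_p\le\vartheta^-$. The hardest is the last one, since it is the only one using the positive semidefinite certificate.

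\emph{First link.} Let $S$ be an independent set of size $n=\alpha(G)$ and fix a bijection $\phi\colon[n]\to S$. On $H=\C$, set $Q_{i,x}=\indfunction{x=\phi(i)}$. These scalar projectors commute, so condition $(ii)$ holds trivially. Condition $(i)$ requires the following: if $i\ne j$ in $K_n$ and $Q_{i,x}Q_{j,y}\neq0$, then $x\sim_{\overline G}y$. This holds because $\phi(i),\phi(j)$ are distinct and non-adjacent in $G$.

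\emph{Second link.} Let $(Q_{i,x})$ be a non-oracular quantum homomorphism $K_n\to\overline G$ on $H$ of dimension $d$. Condition $(i)$ gives $Q_{i,x}Q_{j,y}=0$ whenever $i\ne j$ and $x,y$ are not adjacent in $\overline G$; this covers $x=y$ and $x\sim_G y$. I would build a projective packing on $H\otimes\C^n$ by
\[
R_x=\sum_i Q_{i,x}\otimes E_{ii}
\]
(where $E_{ii}$ is the diagonal matrix unit of $\C^n$) and look at its total rank: $\sum_x\rank R_x=\sum_{i,x}\rank Q_{i,x}=nd$, giving value $n$ on dimension $nd$. However, this block-diagonal $R_x$ is only a packing if $Q_{i,x}Q_{i,y}=0$ for the same $i$, which is automatic from the PVM. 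The cross terms $Q_{i,x}\otimes E_{ii}$ and $Q_{j,y}\otimes E_{jj}$ with $i\ne j$ multiply to zero anyway, so in fact $R_xR_y=\sum_i Q_{i,x}Q_{i,y}\otimes E_{ii}$. That vanishes for $x\ne y$, but this is trivial and ignores the graph, which suggests the construction is wrong.

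The correct construction, following Man\v{c}inska--Roberson, is to take $H$ itself with $R_x=\Proj$ onto the span of the ranges of $Q_{i,x}$ over all $i$. Orthogonality $Q_{i,x}Q_{j,y}=0$ for $x\sim_G y$, across all $i,j$ (with $i=j$ handled by the PVM), gives $R_xR_y=0$. For the rank, the $Q_{i,x}$ with $i$ varying and $x$ fixed are pairwise orthogonal (the $x=y$, $i\ne j$ case), so $R_x=\sum_iQ_{i,x}$. Then $\sum_x\rank R_x=\sum_{i,x}\rank Q_{i,x}=nd$, and the value is $n$.

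\emph{Third link (main step).} Take a packing $R$ on $H$ of dimension $d$, and a certificate $Q$ at value $\beta$. Write $r_x=\rank R_x$, $P=\sum_xR_x$ and $s=\sum_x r_x=\Tr P$.

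1. Use $Q\otimes$ positivity: the matrix $M=\sum_{x,y}Q_{xy}R_x\otimes\overline{R_y}$ is PSD. Pair it with the maximally entangled vector $\Omega=\sum_k e_k\otimes e_k$, using $\langle\Omega,A\otimes\overline B\,\Omega\rangle=\Tr(AB^{*})$. This gives
\[
0\le\sum_{x,y}Q_{xy}\Tr(R_xR_y).
\]

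2. Split the sum by cases on the pair $(x,y)$.
\begin{itemize}
\item Diagonal terms contribute $(\beta-1)s$.
\item Adjacent pairs contribute $0$, since $R_xR_y=0$.
\item Non-adjacent distinct pairs have $\Tr(R_xR_y)\ge0$ and $Q_{xy}\le-1$, so they contribute at most $-\sum_{x\ne y}\Tr(R_xR_y)=-(\Tr P^2-s)$.
\end{itemize}
Hence $\Tr P^2\le\beta s$.

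3. By Cauchy--Schwarz, $s^2=(\Tr P)^2\le d\,\Tr P^2\le d\beta s$, so $s/d\le\beta$.

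Taking the infimum over $\beta$ and the supremum over packings gives $\alpha_p\le\vartheta^-$. The only delicate point is the PSD pairing in step 1. Equivalently, one can view $\Tr(R_xR_y)$ as a Gram matrix, $\langle R_x,R_y\rangle_{HS}$, whose Schur product with $Q\succeq0$ is PSD; its pairing with the all-ones vector is then nonnegative. I would use this Gram/Schur formulation to avoid conjugation issues.
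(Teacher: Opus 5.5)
Your proof is correct and uses the same arguments the paper relies on: the one-dimensional strategy for $\alpha\le\alpha_q$, the Man\v{c}inska--Roberson packing $R_x=\sum_iQ_{i,x}$ for $\alpha_q\le\alpha_p$, and, for $\alpha_p\le\vartheta^-$, Roberson's computation as sketched in the proof of \Cref{lem:operator-theta-rigidity}. There, the rescaled matrix $\bigl(\frac1d\Tr(R_xR_y)\bigr)_{x,y}$ is primal-feasible with $\langle J,\cdot\rangle\ge t$ by Cauchy--Schwarz, and your case split in step~2 is weak duality written out.

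Two corrections are needed.

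First, the claim in step~1 that $M=\sum_{x,y}Q_{xy}R_x\otimes\overline{R_y}$ is PSD is false in general. Take two non-adjacent vertices, the certificate $Q=\left(\begin{smallmatrix}1&-1\\-1&1\end{smallmatrix}\right)$ at value $2$, and the packing given by the two coordinate projectors of $\C^2$; then $M=\operatorname{diag}(1,-1,-1,1)$. Only the $\Omega$-expectation of $M$ is nonnegative, so use the Gram/Schur formulation you already propose. Alternatively, as the paper does, apply the PSD operator $Q\otimes\operatorname{id}_{\End(H)}$ to $\sum_x\mathbf e_x\otimes R_x$.

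Second, delete the abandoned block-diagonal construction in the second link. Its value is $1$, not $n$.
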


In particular, if Schrijver's bound is tight for a graph $G$, all quantities in~\Cref{thm_alpha_p_schrijver} coincide.
It shall be particularly useful, for our purposes, to understand the implication of the fact that $\alpha_p=\vartheta^{-}$. This requires a slight extension of Roberson's argument in~\cite{roberson2013variations}, whose proof we shall give in~\Cref{sec_non_contextuality_schrijver}. 

\begin{restatable}{theorem}{lemoperatorthetarigidity}
\label{lem:operator-theta-rigidity}
Suppose that $\alpha_p(G)=\vartheta^-(G)$, and let this be witnessed by a projective packing $R=(R_x)_{x\in X}\subseteq \Proj(H)$ and a dual $\vartheta^-$ certificate $Q$. Then  the operator-valued function $x\mapsto R_{x}$, seen as an element of $\C^X\otimes\End(H)$, lies
in $\ker Q\otimes\End(H)$; equivalently, each of its scalar
matrix entries belongs to $\ker Q$.
\end{restatable}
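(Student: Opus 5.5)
The plan is to rerun Roberson's proof of $\alpha_p(G)\leq\vartheta^-(G)$ from~\Cref{thm_alpha_p_schrijver} while tracking its slack, and then to use the equality case. Let $R=(R_x)_{x\in X}\subseteq\Proj(H)$ be the projective packing and $Q$ the dual certificate at value $\beta=\vartheta^-(G)$, with $d=\dim H$ and $S=\sum_x\rank(R_x)=\sum_x\Tr(R_x)$. By hypothesis $S/d=\beta$. Equip $\End(H)$ with the Hilbert--Schmidt inner product $\langle A,B\rangle=\Tr(A^*B)$, and consider
\[
v=\sum_{x\in X}e_x\otimes R_x\in\C^X\otimes\End(H).
\]
Since $Q$ is real symmetric and positive semidefinite, $Q\otimes I$ is a self-adjoint positive semidefinite operator on this space. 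This gives
\[
0\leq\langle v,(Q\otimes I)v\rangle=\sum_{x,y}Q_{xy}\Tr(R_xR_y).
\]

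Next I would bound the right-hand side from above, term by term.
\begin{itemize}
\item Diagonal terms contribute $(\beta-1)S$, since $\Tr(R_x^2)=\Tr(R_x)$.
\item Terms with $x\sim_G y$ vanish, because $R_xR_y=0$.
\item For distinct non-adjacent $x,y$, we have $\Tr(R_xR_y)=\|R_xR_y\|_{HS}^2\geq0$ and $Q_{xy}\leq-1$, so $Q_{xy}\Tr(R_xR_y)\leq-\Tr(R_xR_y)$.
\end{itemize}
Adding back the vanishing adjacent terms and the diagonal terms, the right-hand side is at most
\[
\beta S-\sum_{x,y}\Tr(R_xR_y)=\beta S-\Tr(P^2),\qquad P=\sum_xR_x.
\]
Since $P$ is Hermitian, Cauchy--Schwarz gives $\Tr(P^2)\geq(\Tr P)^2/d=S^2/d$. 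Altogether,
\[
0\leq\langle v,(Q\otimes I)v\rangle\leq\beta S-S^2/d=S(\beta-S/d).
\]
This recovers Roberson's bound $\operatorname{val}(R)\leq\beta$.

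Under the hypothesis $S/d=\beta$, the right end of the chain is $0$, so every inequality is an equality. In particular $\langle v,(Q\otimes I)v\rangle=0$. For a positive semidefinite operator $T$, $\langle v,Tv\rangle=\|T^{1/2}v\|^2$, so this forces $(Q\otimes I)v=0$, i.e.\ $v\in\ker(Q\otimes I)=\ker Q\otimes\End(H)$. To get the entrywise formulation, fix a basis of $H$ and apply the coordinate functional $A\mapsto A_{ij}$ on the second factor. Then $(Q\otimes I)v=0$ says exactly that $\sum_yQ_{xy}(R_y)_{ij}=0$ for all $x$, i.e.\ the vector $((R_x)_{ij})_{x\in X}$ lies in $\ker Q$.

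The argument is short, and the only point needing care is the middle step. One must check that all the off-diagonal traces are real and nonnegative, so that the inequality $Q_{xy}\leq-1$ can be applied. One must also observe that passing from ``the quadratic form is zero'' to ``the vector is in the kernel'' uses positive semidefiniteness of $Q\otimes I$ on the complexified space. Both hold because $Q$ is real symmetric and the $R_x$ are orthogonal projectors.
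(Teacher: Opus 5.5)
Your proof is correct and is essentially the paper's argument with the cited ingredients written out. The paper obtains $\sum_{x,y}Q_{xy}\Tr(R_xR_y)=0$ from Roberson's lemma: the matrix $\bigl(\Tr(R_xR_y)/(dt)\bigr)_{x,y}$ is primal-feasible with objective at least $t$, which is your Cauchy--Schwarz step. It then combines this with weak duality and complementary slackness, and together these are exactly your explicit chain of inequalities. The final deduction, that $\sum_x e_x\otimes R_x$ lies in $\ker Q\otimes\End(H)$ because the positive semidefinite operator $Q\otimes\mathrm{id}$ has vanishing quadratic form there, is the same as the paper's, so your version simply has the advantage of being self-contained.
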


The intuition is as follows.
Suppose that $\alpha(G)=\vartheta^-(G)$ (i.e., the classical Schrijver's bound is tight), and let $Q$ be an optimal dual $\vartheta^-$
certificate.
Letting $v_S$ be the indicator vector of a maximum independent set $S$ of $G$, we have
\[
  v_S^{\top} Q\,v_S
  \;=\;\sum_{x,y\in S}Q_{xy}
  \;\leq\;\lvert S\rvert\bigl(\vartheta^-(G)-1\bigr)-\lvert S\rvert\bigl(\lvert S\rvert-1\bigr)
  \;=\;\lvert S\rvert\bigl(\vartheta^-(G)-\lvert S\rvert\bigr)
  \;=\;0 .
\]
Since $Q$ is positive semidefinite, the left-hand side 
vanishes, and therefore $v_S\in\ker Q$.
A special case of this is the classical statement for the Hoffman bound~\cite[Theorem~2.4.2]{godsil2016erdos}, asserting that  the
characteristic vector of any maximum independent set of a regular graph attaining Hoffman bound is a linear combination of the
all-ones vector and an eigenvector relative to the minimum eigenvalue of the graph.
\Cref{lem:operator-theta-rigidity} is an operator-version of this phenomenon, describing
the structure of projective packings attaining equality in Roberson's bound.

At a high level, we aim to use this information as follows. Recall that our final goal (\Cref{thm_RE_completeness_classes}) is to prove $\RE$-completeness of the quantum homomorphism problem parameterised by $G$. To that end, since the homomorphism problem for $G$ is classically hard, it is enough to show that all quantum polymorphisms of $G$ are non-contextual. Our strategy is then to use~\Cref{lem:operator-theta-rigidity}  to force the configurations generated
by the system of projectors in a perfect quantum strategy for the polymorphism game of $G$ to lie in some restricted subspace. This produces nontrivial linear
identities among the projectors, which, in turn, can be used to force commutation across all measurements. In the following subsection, we shall add to $G$ the extra structure that is needed to force this restriction.

\subsection{Schrijver-rigidity and tame disjointness representations}
\label{subsec:rigid-schrijver}
The  rigidity condition following from~\Cref{lem:operator-theta-rigidity} alone is not sufficient to prove commutation. To strengthen it, we  use an idea from the theory of (generalised) Erd\H{o}s--Ko--Rado theorems, see the treatment in
\cite{godsil2016erdos}. Intuitively, we want to have some explicit description of the large independent sets of $G$ that achieve Schrijver's bound. We formalise this idea as follows.
Henceforth, we fix a graph $G=(X,E)$ and let $v=|X|$ be its order.

\begin{definition}
\label{defn_disjointness_representation}
Fix a finite set $P$ (whose elements we call \textit{features}) and an integer $1\leq r\leq |P|$.  A
\emph{$(P,r)$-disjointness representation} of $G$ consists of an injective map
\[
             T\colon X\to\binom Pr,
             \qquad x\mapsto T_x,
\]
such that
\begin{align}
\label{eqn_1051_1408}
             x\sim_Gy\quad\Longrightarrow\quad T_x\cap T_y=\emptyset,
\end{align}
and $P=\bigcup_{x\in X}T_x$.  Equivalently,  the map $T$ is an injective homomorphism
from $G$ to the Kneser graph $\KG(|P|,r)$ with no unused features.
\end{definition}
The above can be viewed
as an injective variant of  a fractional colouring of a graph, see e.g.~\cite[\S~3.1]{scheinerman2011fractional}, or as a weaker version of Kneser representations of graphs in the sense of~\cite{hamburger2009kneser,furedi2018kneser}. The difference from the latter is that a Kneser representation also requires the converse implication of~\cref{eqn_1051_1408} to hold and therefore captures graphs that are induced subgraphs of Kneser graphs.

Fix a $(P,r)$-disjointness representation $T$ of $G$. Given an element
$p\in P$, define the
\textit{feature fibre} of $p$ as the set
$S_p=\{x\in X\mid p\in T_x\}$. Moreover, define the \textit{feature function} of $p$ as 
the indicator function of $S_p$; i.e., 
the function
$\chi_p\colon X\to\{0,1\}$ given by
$\chi_p(x)=\indfunction{p\in T_x}$.%
\begin{observation}
\label{obs_independence_number_disjointness_representation}
    Note that 
    \begin{align}
\label{eqn_1648_0308}
                  \sum_{p\in P}\chi_p=r\1.
\end{align} and, thus, \begin{align*}
    \sum_{p\in P}|S_p|=\sum_{p\in P}\sum_{x\in X}\chi_{p}(x)=\sum_{x\in X}\sum_{p\in P}\chi_p(x)=\sum_{x\in X}r=rv.
\end{align*}
Since it follows from
\Cref{defn_disjointness_representation} that $S_p$ is a nonempty
independent set of $G$ for each $p\in P$, it is clear that, if $G$ admits a $(P,r)$-disjointness representation, then $\alpha(G)\geq\frac{rv}{|P|}$.
\end{observation}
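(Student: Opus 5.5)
The claims in \Cref{obs_independence_number_disjointness_representation} follow directly from \Cref{defn_disjointness_representation}, so the plan is to verify each in turn.

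\textbf{Identity \eqref{eqn_1648_0308}.} I would evaluate both sides at an arbitrary vertex $x\in X$. By definition of the feature functions,
\[
\sum_{p\in P}\chi_p(x)=\sum_{p\in P}\indfunction{p\in T_x}=|T_x|.
\]
Since $T_x\in\binom Pr$, we have $|T_x|=r$, which gives \eqref{eqn_1648_0308}.

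\textbf{Counting identity.} Summing \eqref{eqn_1648_0308} over all $x\in X$ and exchanging the finite double sum gives $\sum_{p}|S_p|=rv$, because $|S_p|=\sum_x\chi_p(x)$. This is exactly the displayed chain.

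\textbf{Fibres are nonempty independent sets.} For nonemptiness, I would use the covering condition $P=\bigcup_x T_x$: every feature $p$ lies in some $T_x$, so $x\in S_p$. For independence, suppose $x,y\in S_p$ with $x\sim_G y$. Then $p\in T_x\cap T_y$. But \eqref{eqn_1051_1408} forces $T_x\cap T_y=\emptyset$, a contradiction. Loops do not arise, since $G$ is a simple graph.

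\textbf{The bound on $\alpha(G)$.} Each $S_p$ is independent, so $|S_p|\le\alpha(G)$ for every $p$. Hence
\[
rv=\sum_{p\in P}|S_p|\le |P|\,\alpha(G),
\]
and $\alpha(G)\ge rv/|P|$ follows.

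No step is a real obstacle. The only point requiring care is to invoke both defining features of a disjointness representation: the covering condition gives nonemptiness, and the disjointness condition gives independence. Injectivity of $T$ is not needed for any of these claims.
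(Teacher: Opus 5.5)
Your proposal is correct and follows essentially the same argument as the paper: $|T_x|=r$ gives \eqref{eqn_1648_0308}, exchanging the double sum gives $\sum_{p\in P}|S_p|=rv$, the disjointness condition makes each fibre independent, and hence $rv=\sum_{p\in P}|S_p|\le|P|\,\alpha(G)$. You are also right that injectivity of $T$ plays no role here, and nonemptiness of the fibres, although part of the statement, is not actually needed for the inequality.
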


We let 
the \emph{incidence space} of a $(P,r)$-disjointness representation of $G$ be the space
\[
                  \incspace=\Span\{\chi_p \mid p\in P\}\subseteq\C^X.
\]
It follows from~\eqref{eqn_1648_0308} that $\1\in\incspace$. Hence, we write $\incspace=\C\1\oplus U$ with $U\coloneqq\incspace\cap\1^\perp$.

In the following, we regard $\C^{X\times X}$ as the space of square matrices indexed by $X$
and identify $f\otimes g$ with $fg^\top$.

\begin{definition}
The \emph{edge kernel} of a disjointness representation $T$ is the set
\[
 \edgekernel(T)
 =\{F\in\incspace\otimes\incspace \mid
          F(x,y)=0\text{ whenever }x\sim_Gy\}.
\]
We shall often abbreviate it to $\edgekernel$ when $T$ is clear.
\end{definition}
Consider now the  constant-diagonal subspace
\[
 \diagonalSubspace
 =\{F\in\edgekernel \mid F(x,x)\text{ is independent of }x\}.
\]
We denote the neighbourhood of a vertex $x\in X$ by
$N_G(x)=\{a\in X \mid a\sim_Gx\}$; we write $N(x)$ when $G$ is clear.
Consider the map $\Delta_x\colon \diagonalSubspace\to 
       \C^{N_G(x)\times N_G(x)}$ given by
\begin{align}
\label{eqn_18_08_1510}
 \Delta_x(F)_{a,b}=F(a,a)-F(a,b)
\end{align}
for all $a,b\in N_G(x)$.

\begin{definition}
A disjointness representation is
\emph{tame} if the following two conditions hold:
\begin{enumerate}
\item every $F\in\diagonalSubspace$ is symmetric:
      \[
                         F(x,y)=F(y,x);                     \tag{R1}\label{tag:R1}
      \]
 \item for every $x\in X$, the map $\Delta_x$ is injective:
      \[
       F\in\diagonalSubspace,\quad
       F(a,b)=F(a,a) \;\;\mbox{ for each }a,b\in N_G(x)
       \quad\Longrightarrow\quad F=0.
                        \tag{R2}\label{tag:R2}
      \]
\end{enumerate}
\end{definition}
The quantum polymorphisms of $G$ are quantum homomorphisms from powers of $G$ to $G$. Hence, we need to understand the structure of disjointness representations of \textit{powers} of $G$. In particular, for any disjointness representation $T$ of $G$ and any positive integer $m$, there exists a canonical disjointness representation $T^{[m]}$ of $G^m$
defined by $T^{[m]}(x_1,\dots,x_m)\coloneqq\{(i,p)\mid p\in T_{x_i}\}$, see~\Cref{representation-power}.
In the following, we write $\incspace^{[m]}$, $\edgekernel^{[m]}$, and $\diagonalSubspace^{[m]}$ for $\incspace(T^{[m]})$, $\edgekernel(T^{[m]})$, and $\diagonalSubspace(T^{[m]})$ when $T$ is clear from context.
In particular, we have
\[
\incspace^{[m]}
=
\operatorname{span}
\left\{
\mathbf 1_X^{\otimes(i-1)}
\otimes \chi_p
\otimes \mathbf 1_X^{\otimes(m-i)}
\;\middle|\;
i\in[m],\ p\in P
\right\},
\]
where, recall, $\chi_p(x)=\indfunction{p\in T_x}$.

In this work we consider disjointness representations whose corresponding incidence spaces (in any dimension $m$) are precisely the kernel of an optimal Schrijver dual certificate, as formalised in the following definition.

\begin{definition}
\label{def:power-theta-rigid}
A $(P,r)$-disjointness representation $T$ of $G$ is
\emph{Schrijver-rigid} if, for every $m\geq1$, the graph
$G^m$ has a dual $\vartheta^-$ certificate $Q_m$ at value $\beta_m=\frac{rv^m}{|P|}$ such that $\ker Q_m=\incspace^{[m]}$.
\end{definition}

Combining~\Cref{thm_alpha_p_schrijver} and Observation~\ref{obs_independence_number_disjointness_representation}, we note that, for
any graph $G$ admitting a Schrijver-rigid representation, Schrijver's bound must be tight, in the sense that $\alpha(G)=\vartheta^-(G)$. 
We can now state the following main technical result.

\begin{restatable}{theorem}{thmcriterion}
\label{thm:criterion}
Let $G$ be a connected non-bipartite graph. 
Suppose that $G$ has a Schrijver-rigid tame disjointness representation.
Then $G$ has a commutativity gadget and $\qCSP(G)$ is $\RE$-complete.
\end{restatable}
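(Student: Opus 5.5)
The plan is to reduce, via the theory of quantum polymorphisms from~\cite{CJM}, to showing that every quantum polymorphism of $G$ is non-contextual. Since $G$ is non-bipartite, this suffices: a categorical power of $G$ then serves as a commutativity gadget, and $\qCSP(G)$ is $\RE$-complete. So fix $m\ge1$ and a quantum homomorphism $Q\colon G^m\qto G$ on a Hilbert space $H$. The goal is to show that all projectors $Q_{\bx,y}$ pairwise commute.

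\textbf{Step 1: rigidity from Schrijver's bound.} For each $p\in P$, I set $R^{(p)}_{\bx}=\sum_{y\in S_p}Q_{\bx,y}$ for $\bx\in X^m$. This is a projector, since the $Q_{\bx,y}$ are orthogonal over $y$.

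I then check that $R^{(p)}$ is a projective packing of $G^m$. If $\bx\sim\bx'$, then $Q_{\bx,y}Q_{\bx',y'}=0$ unless $y\sim y'$. But $y,y'\in S_p$ lies in an independent set, so $R^{(p)}_{\bx}R^{(p)}_{\bx'}=0$.

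Next I compute its value by summing over $p$ and using $\sum_p\chi_p=r\1$:
\[
\sum_p\sum_{\bx}\rank R^{(p)}_{\bx}=r\,v^m\dim H .
\]
Hence the average of $\operatorname{val}(R^{(p)})$ over $p$ is $rv^m/|P|=\beta_m$. Each value is at most $\alpha_p(G^m)\le\vartheta^-(G^m)\le\beta_m$, so every $R^{(p)}$ attains equality. Applying \Cref{lem:operator-theta-rigidity} with the certificate $Q_m$ of \Cref{def:power-theta-rigid} gives that every matrix entry of $\bx\mapsto R^{(p)}_{\bx}$ lies in $\ker Q_m=\incspace^{[m]}$.

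Since the $\chi_p$ span $\incspace$, linearity gives more: for every $f\in\incspace$, the map $\bx\mapsto\sum_y f(y)Q_{\bx,y}$ has entries in $\incspace^{[m]}$.

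\textbf{Step 2: second-order identities and the edge kernel.} For fixed vectors $u,w\in H$ and $f,g\in\incspace$, I consider the two-variable function
\[
F(\bx,\bx')=\Big\langle u,\Big(\sum_y f(y)Q_{\bx,y}\Big)\Big(\sum_{y'}g(y')Q_{\bx',y'}\Big)w\Big\rangle .
\]
Inserting a resolution by an orthonormal basis between the two factors shows that $F$ lies in $\incspace^{[m]}\otimes\incspace^{[m]}$. The aim is to reduce to functions vanishing on edges and having constant diagonal, so that they land in $\diagonalSubspace^{[m]}$. The natural choice is $f=g=\chi_p$: the product $R^{(p)}_{\bx}R^{(p)}_{\bx'}$ vanishes on edges, and on the diagonal it equals $R^{(p)}_{\bx}$. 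For the diagonal to be constant I would instead take quantities of the form $\Tr(A\,R^{(p)}_{\bx}R^{(q)}_{\bx'})$ and normalise suitably.

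Since $T^{[m]}$ should inherit tameness from $T$, I then apply the two tameness conditions. Condition \eqref{tag:R1} (symmetry) gives
\[
\Tr\big(A R_{\bx}R_{\bx'}\big)=\Tr\big(A R_{\bx'}R_{\bx}\big)\quad\text{for all }A,
\]
so the packing projectors commute. Condition \eqref{tag:R2} is the tool for propagating: the neighbourhood behaviour determines $F$, and on neighbourhoods the oracular commutation condition~(ii) already holds.

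\textbf{Step 3: from packing projectors to the $Q_{\bx,y}$.} I recover each $Q_{\bx,y}$ from the commuting family $\{R^{(p)}_{\bx}\}_p$. Products of $R^{(p)}_{\bx}$ over $p\in T_y$ and complements over $p\notin T_y$ isolate $Q_{\bx,y}$, because $T$ is injective and the $Q_{\bx,\cdot}$ form a PVM. So commutation of all the $R$'s yields non-contextuality.

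\textbf{Main obstacle.} The hardest part is Step 2. First, one must verify that tameness of $T$ transfers to $T^{[m]}$; this is likely a separate lemma proven by tensor decomposition of $\incspace^{[m]}=\C\1\oplus\bigoplus_i U_i$. Second, one must choose the bilinear functional so that the constant-diagonal requirement holds. My first attempt is to use the commutator function
\[
F(\bx,\bx')=\Tr\big(A\,[R^{(p)}_{\bx},R^{(q)}_{\bx'}]\big).
\]
This function is antisymmetric and has zero diagonal, hence lies in $\diagonalSubspace^{[m]}$ once edge vanishing is checked via~(ii). Condition \eqref{tag:R1} then forces it to be symmetric as well as antisymmetric, so it is zero.
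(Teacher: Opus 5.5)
Your Steps~1 and~3 coincide with the paper's proof. The paper isolates $Q_{\bx,a}=\prod_{p\in T_a}R^{(p)}_{\bx}$ without complements, but your version also works, and the commutator function is the right object to test against tameness.

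The gap is in Step~2, which is where the proof's weight lies. For fixed $p,q$, the operator-valued function $C_{p,q}(\bx,\bx')=[R^{(p)}_{\bx},R^{(q)}_{\bx'}]$ does lie in $\diagonalSubspace^{[m]}\otimes\End(H)$. Its diagonal is zero because $R^{(p)}_{\bx}$ and $R^{(q)}_{\bx}$ are coarse-grainings of one PVM, and it vanishes on edges by~(ii). But it is antisymmetric only when $p=q$: swapping the arguments gives $[R^{(p)}_{\bx'},R^{(q)}_{\bx}]$, not $-C_{p,q}(\bx,\bx')=[R^{(q)}_{\bx'},R^{(p)}_{\bx}]$. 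Hence (\ref{tag:R1}) yields $[R^{(p)}_{\bx},R^{(p)}_{\bx'}]=0$. For $p\neq q$ it yields only $[R^{(p)}_{\bx},R^{(q)}_{\bx'}]=[R^{(p)}_{\bx'},R^{(q)}_{\bx}]$, whereas Step~3 needs commutation across distinct features.

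Your earlier application of (\ref{tag:R1}) to $\Tr(AR_{\bx}R_{\bx'})$ is not legitimate either, since its diagonal $\Tr(AR_{\bx})$ is not constant. Your sketch for (\ref{tag:R2}) does not fill the gap. To get $\Delta_{\bz}(C_{p,q})=0$ you would need $C_{p,q}$ to vanish on pairs of neighbours of $\bz$. Condition~(ii) only gives vanishing on pairs $(\bz,\by)$ with $\by\sim\bz$, and two neighbours of $\bz$ are in general not adjacent.

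The missing idea is to use (\ref{tag:R2}) to obtain commutation with $R^{(p)}_{\bz}$ rather than vanishing, and then conclude with \Cref{lemma_commutator_horn_johnson}. Once the case $p=q$ is known, the Jacobi identity gives $[R^{(p)}_{\bz},C_{p,q}(\bx,\by)]=[R^{(p)}_{\bx},C_{p,q}(\bz,\by)]$. The right-hand side vanishes whenever $\by\sim\bz$. So $R^{(p)}_{\bz}$ commutes with the values at $C_{p,q}$ of all functionals $F\mapsto F(\bx,\bx)-F(\bx,\by)$ with $\bx,\by$ neighbours of $\bz$. Condition (\ref{tag:R2}) for $T^{[m]}$ says these functionals span the dual of the constant-diagonal space. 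Therefore $R^{(p)}_{\bz}$ commutes with every value of $C_{p,q}$, in particular with $[R^{(p)}_{\bz},R^{(q)}_{\by}]$, which must then vanish since projectors are diagonalisable.

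The paper runs this argument coordinatewise. It writes $C_{p,q}=\sum_i\iota_i(D_i)$ with $D_i\in\diagonalSubspace\otimes\End(H)$ using \Cref{lem:power-kernel}, and applies (\ref{tag:R1}) and (\ref{tag:R2}) for $T$ itself at the vertex $z_j$. That lemma is also what your unproved claim that tameness passes to $T^{[m]}$ rests on. It is more than a tensor decomposition of $\incspace^{[m]}$: its proof is where connectedness and non-bipartiteness are used, to show that a function $(x,y)\mapsto c+u(x)+u'(y)$ with $u,u'\in U$ vanishing on all oriented edges is zero. Your proposal never invokes these hypotheses.
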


\subsection{Non-contextuality from tame Schrijver-rigidity}
We next outline the proof of~\Cref{thm:criterion}. The key to the proof is the following result.

\begin{restatable}{lemma}{lemdegreeonecommutativity}
\label{lem:degree-one-commutativity}
Let $G$ be a connected non-bipartite graph admitting a  $(P,r)$-disjointness tame representation with incidence space $\incspace$.
Let also $H$ be a finite-dimensional Hilbert space. Take a family of projectors
$P_{\mathbf x,p}\in\Proj(H)$ for $\mathbf x\in X^m,\ p\in P$
satisfying the following conditions:
\begin{enumerate}
\item 
the assignment $\mathbf x\mapsto P_{\mathbf x,p}$ belongs to
      $\incspace^{[m]}\otimes\End(H)$ for every $p\in P$;
\item $[P_{\mathbf x,p},P_{\mathbf x,q}]=0$ for every
      $\mathbf x\in X^m$ and every $p,q\in P$;
\item $[P_{\mathbf x,p},P_{\mathbf y,q}]=0$ for every $p,q\in P$ whenever
      $\mathbf x\sim_{G^{m}}\mathbf y$.
\end{enumerate}
Then all projectors in the family commute with each other.
\end{restatable}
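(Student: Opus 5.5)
The plan is to turn every commutator into a scalar-valued function on $X^m\times X^m$ and use tameness to show it vanishes. Fix $p,q\in P$ and consider
\[
F_{p,q}(\mathbf x,\mathbf y)\coloneqq[P_{\mathbf x,p},P_{\mathbf y,q}]\in\End(H),\qquad \mathbf x,\mathbf y\in X^m .
\]
By condition~1, $\mathbf x\mapsto P_{\mathbf x,p}$ lies in $\incspace^{[m]}\otimes\End(H)$. Since the commutator is bilinear, $F_{p,q}$ lies in $\incspace^{[m]}\otimes\incspace^{[m]}\otimes\End(H)$. Hence, after choosing a basis of $\End(H)$, each scalar matrix entry $F_{p,q}(\cdot,\cdot)_{ij}$ lies in $\incspace^{[m]}\otimes\incspace^{[m]}$.

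Condition~3 says these entries vanish whenever $\mathbf x\sim_{G^m}\mathbf y$, so they lie in $\edgekernel^{[m]}$. Condition~2 says the diagonal $\mathbf x=\mathbf y$ is identically zero, so they lie in $\diagonalSubspace^{[m]}$, with zero diagonal. The goal is therefore to show that every element of $\diagonalSubspace^{[m]}$ with zero diagonal arising in this way is zero.

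The step I expect to be the main obstacle comes first: lifting tameness from $T$ to the canonical power representation $T^{[m]}$, i.e.\ proving \eqref{tag:R1} and \eqref{tag:R2} for $\diagonalSubspace^{[m]}$. I would expand $F\in\incspace^{[m]}\otimes\incspace^{[m]}$ as a sum of blocks
\[
F(\mathbf x,\mathbf y)=c+\sum_i g_i(x_i)+\sum_j h_j(y_j)+\sum_{i,j} f_{ij}(x_i,y_j),
\]
with $g_i,h_j\in U$ and $f_{ij}\in U\otimes U$. I would then exploit the product structure of adjacency in $G^m$: fix all coordinates except one pair, letting them range over adjacent pairs of $G$, and use that every vertex of $G$ has a neighbour since $G$ is connected and non-bipartite. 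I expect this to show:
\begin{itemize}
\item the off-diagonal blocks $f_{ij}$ with $i\neq j$, and the one-variable parts, are forced to be constant on the relevant fibres and then to vanish;
\item each diagonal block $f_{ii}$ lies in the edge kernel of $G$, with constant diagonal, i.e.\ in $\diagonalSubspace$.
\end{itemize}
Applying \eqref{tag:R1} blockwise then gives symmetry of $F$. For \eqref{tag:R2} in $G^m$ I would use $N_{G^m}(\mathbf x)=\prod_i N_G(x_i)$ and restrict to pairs $\mathbf a,\mathbf b$ that differ in a single coordinate. Here non-bipartiteness should be what rules out degenerate sign-alternating solutions on the off-diagonal blocks, via an odd cycle argument.

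With tameness of $T^{[m]}$ in hand, the case $p=q$ is immediate. The function $F_{p,p}$ is antisymmetric, $F_{p,p}(\mathbf y,\mathbf x)=-F_{p,p}(\mathbf x,\mathbf y)$, while \eqref{tag:R1} makes every element of $\diagonalSubspace^{[m]}$ symmetric. Hence $F_{p,p}=0$, i.e.\ $[P_{\mathbf x,p},P_{\mathbf y,p}]=0$ for all $\mathbf x,\mathbf y$.

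For $p\neq q$, symmetry gives
\[
[P_{\mathbf x,p},P_{\mathbf y,q}]=[P_{\mathbf y,p},P_{\mathbf x,q}].
\]
To conclude I would use \eqref{tag:R2}. For fixed $\mathbf z$ and $\mathbf a,\mathbf b\in N(\mathbf z)$, I would show $F_{p,q}(\mathbf a,\mathbf b)=F_{p,q}(\mathbf a,\mathbf a)=0$. The idea is to compare $P_{\mathbf b,q}$ with $P_{\mathbf a,q}$ through their common neighbour $\mathbf z$, using condition~3 (commutation with $P_{\mathbf z,\cdot}$), condition~2, and the already established same-feature commutation, together with the linear structure from condition~1. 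Then \eqref{tag:R2} forces $F_{p,q}=0$.

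If this direct route stalls, my fallback is to treat the pair $(p,q)$ as a single feature of a doubled representation. Concretely, I would run the $p=q$ argument on the combined family $\mathbf x\mapsto(P_{\mathbf x,p},P_{\mathbf x,q})$ inside $\incspace^{[m]}\otimes\End(H\oplus H)$ and extract the cross commutators from the off-diagonal corner.
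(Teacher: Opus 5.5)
Your reduction is sound through the case $p=q$. The entries of $F_{p,q}$ do lie in $\diagonalSubspace^{[m]}$. Tameness does lift to $T^{[m]}$, via \Cref{lem:power-kernel}, which yields $\diagonalSubspace^{[m]}=\bigoplus_i\iota_i(\diagonalSubspace)$. (In your block sketch the one-variable parts need not vanish. It is $f_{ii}$ together with $g_i$, $h_i$ and part of the constant, not $f_{ii}$ alone, that lies in $\edgekernel$.) Antisymmetry against (R1) then kills $F_{p,p}$ exactly as in the paper.

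The gap is the case $p\neq q$. You plan to show $F_{p,q}(\mathbf a,\mathbf b)=0$ for $\mathbf a,\mathbf b\in N(\bz)$ and then apply (R2) to $F_{p,q}$ itself, but none of your listed ingredients produces this vanishing.

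- Conditions 2 and 3 only say that $P_{\bz,\cdot}$ commutes with $P_{\mathbf a,\cdot}$ and with $P_{\mathbf b,\cdot}$, and commutation is not transitive.
- Condition 1 only gives linear relations among values of $F_{p,q}$, not zeros.
- Nor can (R1) plus the zero diagonal finish the job. For $m\geq2$ and $\omega=\sum_{p}\chi_p\otimes\chi_p$, the tensor $\iota_1(\omega)-\iota_2(\omega)$ is a nonzero symmetric element of $\diagonalSubspace^{[m]}$ with zero diagonal.

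The fallback fails for the same reason. For $P_{\bx,p}\oplus P_{\bx,q}$ on $H\oplus H$ the commutators are block diagonal and contain no cross terms at all. For combinations such as $P_{\bx,p}+P_{\bx,q}$, to which the $p=q$ argument does apply, it returns only $[P_{\bx,p},P_{\by,q}]=[P_{\by,p},P_{\bx,q}]$, which is the symmetry you already derived from (R1). The antisymmetry trick cannot detect symmetric elements of $\diagonalSubspace^{[m]}\otimes\End(H)$, and $F_{p,q}$ is symmetric.

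The missing idea is to apply (R2) not to $F_{p,q}$ but to $\Phi_{\bz}(\bx,\by)\coloneqq[P_{\bz,p},[P_{\bx,p},P_{\by,q}]]$.

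1. Since $\mathrm{ad}_{P_{\bz,p}}$ is linear on $\End(H)$, $\Phi_{\bz}$ still has entries in $\diagonalSubspace^{[m]}$ with zero diagonal.
2. By the Jacobi identity and the already established $[P_{\bz,p},P_{\bx,p}]=0$, we get $\Phi_{\bz}(\bx,\by)=[P_{\bx,p},[P_{\bz,p},P_{\by,q}]]$. This vanishes whenever $\by\in N(\bz)$, by condition 3.
3. Hence (R2) for $T^{[m]}$ gives $\Phi_{\bz}=0$.
4. Taking $\bx=\bz$ yields $[P_{\bz,p},[P_{\bz,p},P_{\by,q}]]=0$. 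Since $P_{\bz,p}$ is diagonalisable, \Cref{lemma_commutator_horn_johnson} gives $[P_{\bz,p},P_{\by,q}]=0$.

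The paper runs this same double-commutator mechanism coordinatewise. It writes $F_{p,q}=\sum_i\iota_i(D_i^{p,q})$ with $D_i^{p,q}\in\diagonalSubspace\otimes\End(H)$, and uses (R2) for $T$ in the dual form $L_x=\diagonalSubspace^*$ on each $D_j^{p,q}$. Your lifted tameness would let you run the argument directly on $G^m$, but only once this Jacobi step is added.
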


Once the lemma above is established,~\Cref{thm:criterion} can be easily derived from the theory of quantum polymorphisms from~\cite{CJM}, see~\Cref{subsec_proving_main_technical_thm}.

We are left to prove~\Cref{lem:degree-one-commutativity}. Our strategy here is to first prove that, for any $x\in X$, the dual space of $\diagonalSubspace$ is generated by the linear functionals $\delta-\epsilon_{a,b}$ for $a,b$ neighbours of $x$ in $G$, where $\delta$ and $\epsilon_{a,b}$ assign to $F\in\diagonalSubspace$ its common diagonal and the $(a,b)$-th entry, respectively. In particular, the span $L_x$ of the latter functionals is independent of $x$. 
Then, for fixed $p,q\in P$, we consider the commutator map $C_{p,q}\colon X^m\times X^m\to\End(H)$ defined by   
$C_{p,q}(\mathbf x,\mathbf y)
       \coloneqq[P_{\mathbf x,p},P_{\mathbf y,q}]$ and we show that $C_{p,q}$
belongs to
$\edgekernel^{[m]}\otimes\End(H)$. In particular, we can decompose the commutator as a linear combination of elements whose first factor lies in the edge kernel $\edgekernel^{[m]}$ of the canonical power of the disjointness representation.
This means that we need to understand the shape of $\edgekernel^{[m]}$. The main technical part of our proof is to show that $\edgekernel^{[m]}$ in fact factors along its $m$ one-dimensional components. More precisely,  letting 
$\iota_i$ be the embedding $\C^{X}\to\C^{X^m}$ defined by $\iota_i(f)=\1^{\otimes(i-1)}\otimes f\otimes\1^{\otimes(m-i)}$, we show the following.

\begin{restatable}{lemma}{lempowerkernel}
\label{lem:power-kernel}
Fix $m\geq1$.  Assume that $G$ is connected and non-bipartite.  
Then $\edgekernel^{[m]}=\bigoplus_{i\in[m]}\iota_i(\edgekernel)$.
\end{restatable}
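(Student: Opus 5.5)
The plan is to decompose $\incspace^{[m]}\otimes\incspace^{[m]}$ explicitly, write a general element of $\edgekernel^{[m]}$ in that decomposition, and then use the edge condition on $G^m$, one coordinate at a time, to kill all cross-coordinate terms. Here $\iota_i$ acts on matrices coordinatewise: $\iota_i(F)(\mathbf x,\mathbf y)=F(x_i,y_i)$.

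\textbf{Decomposition.} Since $\iota_i(\1)=\1$ for every $i$ and $\incspace=\C\1\oplus U$, we have
\[
\incspace^{[m]}=\C\1\oplus\bigoplus_{i\in[m]}\iota_i(U).
\]
The sum is direct because $U\perp\1$ and different $i$ involve different coordinates. Tensoring, every $F\in\incspace^{[m]}\otimes\incspace^{[m]}$ can be written as
\[
F(\mathbf x,\mathbf y)=\sum_{i\in[m]}F_i(x_i,y_i)+\sum_{i\neq j}G_{ij}(x_i,y_j),
\]
with $F_i\in\incspace\otimes\incspace$ and $G_{ij}\in U\otimes U$. The constant term and the mixed terms $\1\otimes u$, $u\otimes\1$ are absorbed into the $F_i$. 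The $G_{ij}$ are uniquely determined.

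\textbf{Directness of the target sum.} A nonzero constant never lies in $\edgekernel$, because $G$ has edges. Hence the sum $\sum_i\iota_i(\edgekernel)$ is direct. The inclusion $\supseteq$ is immediate: if $\mathbf x\sim_{G^m}\mathbf y$ then $x_i\sim_G y_i$ for every $i$.

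\textbf{Killing the cross terms.} Take $F\in\edgekernel^{[m]}$ and fix $k\neq j$. Freeze all coordinates other than $k$ and $j$ along some edges. Compare the vanishing condition for two choices of the $j$-th edge, $(a,b)$ and $(a',b')$, keeping everything else fixed. Subtracting gives, for all edges $x_k\sim_G y_k$,
\[
\bigl[G_{kj}(x_k,b)-G_{kj}(x_k,b')\bigr]+\bigl[G_{jk}(a,y_k)-G_{jk}(a',y_k)\bigr]+c=0,
\]
where $c$ does not depend on $(x_k,y_k)$. We then use an elementary fact: if $g(x)+h(y)=\text{const}$ on all ordered edges of a connected non-bipartite graph, then $g$ and $h$ are constant. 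To see this, compare $g$ along walks of even length, and use an odd closed walk to reach every vertex by walks of both parities.

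Applying this fact, the function $x\mapsto G_{kj}(x,b)-G_{kj}(x,b')$ is constant. It also lies in $U\perp\1$, so it vanishes. Every vertex has a neighbour, so every $b\in X$ occurs as an endpoint. Therefore the $y$-slices of $G_{kj}$ are independent of $y$. Since these slices lie in $U$, this forces $G_{kj}=0$.

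\textbf{Handling the diagonal terms.} Once all $G_{ij}=0$, we have $F=\sum_i\iota_i(F_i)$. Fixing all coordinates except $k$ along edges and varying them shows that $\sum_{i\neq k}F_i(a_i,b_i)$ is a constant $c_k$ over edge tuples. Therefore $F_k+c_k$ vanishes on edges of $G$, i.e.\ $F_k+c_k\in\edgekernel$; note that $\1\otimes\1\in\incspace\otimes\incspace$, so this shift stays in $\incspace\otimes\incspace$. On any edge tuple we also have $\sum_i F_i=0$, which gives $\sum_k c_k=(m-1)\cdot 0$. Combining, $\sum_k c_k=0$, so
\[
F=\sum_k\iota_k(F_k+c_k)\in\bigoplus_k\iota_k(\edgekernel).
\]

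\textbf{Main obstacle.} The only delicate step is the cross-term elimination. It relies on the parity argument, which is where connectedness and non-bipartiteness are used. It also relies on $G_{ij}$ lying in $U\otimes U$ rather than in $\incspace\otimes\incspace$: this is what turns "constant" into "zero".
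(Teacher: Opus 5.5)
Your proof is correct. Its overall skeleton matches the paper's: you decompose $\incspace^{[m]}\otimes\incspace^{[m]}$ using $\incspace^{[m]}=\C\1\oplus\bigoplus_i\iota_i(U)$, kill the cross terms $G_{ij}$ for $i\neq j$, shift each diagonal term by a constant so that it lands in $\edgekernel$, and finally check directness.

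The genuine difference is in the key step, the elimination of the cross terms.

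\emph{The paper's route.} It restricts $F$ to $\vec E^m$ and proves that $\C\1_{\vec E}+\alpha(U)+\beta(U)$ is a direct sum inside $R=\C^{\vec E}$; this is exactly your parity lemma, stated algebraically. It then chooses a complement $Z$ and tensors the resulting decomposition $m$ times. Finally it reads off $\hat H_{ij}=(\alpha\otimes\beta)(H_{ij})$ as the projection of $\hat F=0$ onto the single summand $P_{ij}$, so injectivity of $\alpha\otimes\beta$ kills $H_{ij}$ in one stroke.

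\emph{Your route.} You take a finite difference over the $j$-th edge, which reduces the problem to a relation $g(x_k)+h(y_k)=\mathrm{const}$ on ordered edges. You then apply the parity lemma. To pass from ``constant'' to ``zero'' you use $G_{kj}\in U\otimes U$ twice: once for the slices $G_{kj}(\cdot,y)$, giving independence of $y$, and once for the slices $G_{kj}(x,\cdot)$.

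\emph{What each buys.} Your argument is more elementary. It avoids choosing $Z$ and the bookkeeping of which summand of $R^{\otimes m}$ each term of $\hat F$ falls into. The paper's argument is more structural. It isolates each $H_{ij}$ exactly by a projection rather than through two rounds of differencing, which makes the role of the injectivity of $\alpha$ and $\beta$ transparent.

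One small point should be made explicit in your directness step. A relation $\sum_i\iota_i(D_i)=0$ holds on all of $X^m\times X^m$, and varying a single coordinate pair $(x_i,y_i)$ forces each $D_i$ to be constant. Only after that does ``no nonzero constant lies in $\edgekernel$'' finish the argument.
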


It is then immediate to show that each factor of $C_{p,q}$ according to this decomposition belongs to $\diagonalSubspace\otimes \End(H)$. From this, a straightforward argument using Jacobi identity shows that the commutator $C_{p,q}$ trivialises and, thus, all projectors in the family commute with each other. This proves~\Cref{lem:degree-one-commutativity}.

\subsection{Schrijver-rigidity in association schemes}
\label{subsec_overview_association_schemes}
In order to use the $\RE$-completeness condition of~\Cref{thm:criterion} in practice, it is convenient to focus on particularly symmetric graph classes where Schrijver-rigidity can be rephrased in terms of spectral conditions. 
Graphs derived from association schemes give a nice framework for constructing such families. 
We start by recalling some standard notions from the theory of association schemes, see e.g.~\cite{DelsarteAssociationSchemes1973,bose1939partially,bose1952classification,bannai_ito_1984,godsil2016erdos,brouwer1989distance,godsil2010associationSchemes}.

\begin{definition}
A \emph{(symmetric) association scheme} on a finite set $X$ is a family
$\mathfrak X=\{A_0,A_1,\ldots,A_s\}$ of nonzero symmetric Boolean
$X\times X$ matrices such that
\[
 A_0=I,\qquad \sum_{i=0}^sA_i=J,
 \qquad A_iA_j=A_jA_i\in\Span_\C\{A_0,\ldots,A_s\}
 \quad(i,j\in\{0,\ldots,s\}).
\]
\end{definition}
All schemes appearing in this paper are symmetric, and by the expression ``association scheme'' we mean a symmetric association scheme.
Observe that the matrices $A_i$ in $\mathfrak X$ can be seen as the adjacency matrices of a partition
$X\times X=R_0\sqcup\cdots\sqcup R_s$.
The complex span $\C[\mathfrak X]$ of the matrices in $\mathfrak X$ is
the \emph{Bose--Mesner algebra} of $\mathfrak X$.
It has a second distinguished basis of pairwise orthogonal real projection matrices %
$E_0=|X|^{-1}J,E_1,\ldots,E_s$, which
satisfy
$\sum_{j=0}^sE_j=I$.
Note that the set $\{E_0,E_1,\dots, E_s\}$ is a PVM onto the Hilbert space $\C^X$ with outcome set $\{0,\dots,s\}$.

It will be useful to keep in mind that both the primal basis $(A_i)_i$ and the dual basis $(E_j)_j$ are orthogonal with respect to the Frobenius inner product (see e.g.~\cite[\S~3.5]{godsil2016erdos}). Indeed, since the relations $R_i$ are pairwise disjoint,
\begin{align}
\label{eqn_2608_1840}
\langle A_i,A_k\rangle
=\Tr(A_iA_k)
=\sum_{x,y\in X}(A_i)_{xy}(A_k)_{xy}
=\delta_{ik}|R_i|.
\end{align}
Moreover, since the dual basis consists of mutually orthogonal idempotents, we have
\begin{align}
\label{eqn_2608_1840_b}
\langle E_j,E_\ell\rangle
=\Tr(E_jE_\ell)
=\delta_{j\ell}\Tr(E_j)
=\delta_{j\ell}\rank(E_j).
\end{align}
The spaces $E_j\C^X$ are called the \textit{primitive modules} of $\mathfrak X$, and since $E_i,E_j$ are orthogonal  %
for $i\neq j$, the modules $E_i\C^X$ and $E_j\C^X$ are orthogonal for the standard inner product on $\C^X$.
Moreover, $E_0\C^X$ is $\C\1_X$.
We shall consider the change-of-basis matrix $P\coloneqq (P_{ji})_{0\leq j,i\leq s}$
satisfying
\[
    A_i=\sum_{j=0}^sP_{ji}E_j,
\]
so that $P_{ji}$ is the eigenvalue of $A_i$ on $E_j\C^X$.
For $\Gamma\subseteq[s]$, the \emph{derived graph} $G_\Gamma$ has
vertex set $X$ and adjacency matrix
$A_\Gamma=\sum_{i\in\Gamma}A_i$.

The independence number of $G_\Gamma$ admits the following upper bound, due to Delsarte. Define
\begin{align}
\begin{aligned}
\alpha_{\mathrm{Del}}(G_\Gamma)
\coloneqq\min_{\mathbf b\in\mathbb R^{s+1}}\quad
    &\mathbf1^\top\mathbf b\\
\text{subject to}\quad
    &b_0=1,\\
    &\mathbf b\geq\mathbf0,\\
    &(P^\top\mathbf b)_i\leq0
        &&(i\in[s]\setminus\Gamma).
        \end{aligned}
\end{align}

\begin{theorem}[\cite{DelsarteAssociationSchemes1973}]
\label{thm_delsarte_bound}
    Let $\mathfrak X$ be an association scheme, and let $G_\Gamma$ be a derived graph of $\mathfrak X$. Then $\alpha(G_\Gamma)\leq\alpha_{\operatorname{Del}}(G_\Gamma)$.
\end{theorem}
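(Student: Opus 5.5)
We prove Delsarte's bound $\alpha(G_\Gamma)\leq\alpha_{\operatorname{Del}}(G_\Gamma)$ by showing that the inner distribution of any independent set is a feasible point of the LP with objective value equal to the size of the set. Fix an independent set $S\subseteq X$ of $G_\Gamma$ with indicator vector $v_S\in\R^X$, and define
\[
b_i\coloneqq \frac{v_S^\top A_i v_S}{|S|}=\frac{|\{(x,y)\in S\times S\mid (x,y)\in R_i\}|}{|S|},\qquad i\in\{0,\ldots,s\}.
\]
Then $\mathbf b=(b_0,\dots,b_s)$ is the inner distribution of $S$. We verify the LP constraints in order.

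First, $b_0=v_S^\top v_S/|S|=1$, and each $b_i\geq0$ because $A_i$ is a Boolean matrix. Second, since $\sum_i A_i=J$, we get $\mathbf 1^\top\mathbf b=v_S^\top J v_S/|S|=|S|$. Third, because $S$ is independent in $G_\Gamma$, no pair of elements of $S$ lies in a relation $R_i$ with $i\in\Gamma$. Hence $b_i=0$ for every $i\in\Gamma$.

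The remaining constraints concern $(P^\top\mathbf b)_j$ for $j$ outside $\Gamma$. Here we read $(P^\top \mathbf b)$ as being indexed by eigenspaces, which is the nonnegativity condition after the standard sign and normalisation convention; this is the step that needs care. We express each $E_j$ back in the primal basis: $E_j=\frac1{|X|}\sum_i Q_{ij}A_i$, where $Q$ is the dual eigenmatrix, satisfying $PQ=|X|I$ and $Q_{ij}/\rank(E_j)=P_{ji}/|R_i|\cdot|X|$ by the orthogonality relations \eqref{eqn_2608_1840} and \eqref{eqn_2608_1840_b}. Since $E_j\succeq0$, we have
\[
0\leq v_S^\top E_j v_S=\frac{|S|}{|X|}\sum_i Q_{ij}b_i
\]
for each $j$. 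Thus $Q^\top\mathbf b\geq0$. This is exactly Delsarte's linear-programming constraint, and it is the only nontrivial step: it uses only positive semidefiniteness of the primitive idempotents.

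To match the LP exactly as it is written in the paper, with variables indexed by eigenspaces and the constraint $(P^\top\mathbf b)_i\leq0$ for $i\notin\Gamma$, I would instead apply LP duality. Equivalently, I would take $\mathbf b$ to be the dual variable $b_j\coloneqq \frac{|X|}{|S|^2}v_S^\top E_jv_S$. With this choice, $b_0=|X|/|S|^2\cdot|S|^2/|X|=1$ and $\mathbf b\geq0$ by positive semidefiniteness. Moreover,
\[
(P^\top\mathbf b)_i=\frac{|X|}{|S|^2}\,v_S^\top A_i v_S,
\]
which vanishes for $i\in\Gamma$. I expect this identification of the correct variable to be the main obstacle, since the signs have to match the displayed LP. Once it is fixed, the objective value is read off from $\sum_j E_j=I$, which gives $\mathbf 1^\top\mathbf b=|X|/|S|$.

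Finally, I would reconcile the two formulations through complementary slackness and duality between the primal and dual LPs, which yields the bound $|S|\leq\alpha_{\operatorname{Del}}(G_\Gamma)$. Since $S$ was an arbitrary independent set, this proves $\alpha(G_\Gamma)\leq\alpha_{\operatorname{Del}}(G_\Gamma)$.
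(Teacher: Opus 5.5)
Your first half is sound. Write $\mathbf a$ for the inner distribution of $S$ (indexed by relations; the paper's $\mathbf b$ is indexed by eigenspaces, and $(P^\top\mathbf b)_i$ by relations). You correctly show $a_0=1$, $\mathbf a\geq\mathbf 0$, $a_i=0$ for $i\in\Gamma$, $Q^\top\mathbf a\geq\mathbf 0$ and $\1^\top\mathbf a=|S|$. But this only bounds $|S|$ by the optimum of Delsarte's \emph{maximisation} LP. The paper defines $\alpha_{\operatorname{Del}}$ as a \emph{minimum}, and the step linking the two is exactly what your proposal does not carry out.

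The replacement you offer fails in two ways. First, the vector $b_j=\frac{|X|}{|S|^2}v_S^\top E_jv_S$ is not feasible. The constraints $(P^\top\mathbf b)_i\leq0$ are imposed for $i\in[s]\setminus\Gamma$, not for $i\in\Gamma$. For those $i$ you have $(P^\top\mathbf b)_i=\frac{|X|}{|S|^2}v_S^\top A_iv_S\geq0$, and $\sum_{i\in[s]\setminus\Gamma}v_S^\top A_iv_S=|S|(|S|-1)>0$ once $|S|\geq2$. (This vector is feasible for the LP of the complement $G_{[s]\setminus\Gamma}$, and its value $|X|/|S|$ is the clique--coclique bound, a different statement.) Second, even a feasible point of a minimisation LP only yields $\alpha_{\operatorname{Del}}\leq\1^\top\mathbf b$, which is the wrong direction. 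You must show $|S|\leq\1^\top\mathbf b$ for \emph{every} feasible $\mathbf b$, and invoking complementary slackness does not supply this.

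The missing idea is just weak duality, and you already wrote down the needed identity $Q_{ij}/\rank(E_j)=|X|P_{ji}/|R_i|$. For any feasible $\mathbf b$,
\[
\sum_j\frac{b_j}{\rank(E_j)}(Q^\top\mathbf a)_j=\sum_i a_i\frac{|X|}{|R_i|}(P^\top\mathbf b)_i .
\]
The left side is at least its $j=0$ term, which equals $\1^\top\mathbf a=|S|$ because $b_0=\rank(E_0)=Q_{i0}=1$. On the right, the $i=0$ term is $\1^\top\mathbf b$ (as $P_{j0}=1$), the terms with $i\in\Gamma$ vanish, and the rest are $\leq0$. Hence $|S|\leq\1^\top\mathbf b$.

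The paper only cites Delsarte for this theorem, but its own machinery gives the same conclusion. The matrix $F=\sum_j\frac{b_j}{\rank(E_j)}E_j$ has entries $(P^\top\mathbf b)_i/|R_i|$ on $R_i$. So $|X|F-J=|X|\sum_{j\geq1}\frac{b_j}{\rank(E_j)}E_j\succeq0$ is a dual $\vartheta^-$ certificate at value $\1^\top\mathbf b$, and $\alpha(G_\Gamma)\leq\vartheta^-(G_\Gamma)\leq\1^\top\mathbf b$ by \Cref{thm_alpha_p_schrijver}.
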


It was shown in~\cite{SchrijverDelsarteLovasz1979} that, in fact, Delsarte's bound $\alpha_{\mathrm{Del}}$ (which is formulated via a linear program) is \textit{equal} to Schrijver's SDP bound $\vartheta^{-}$ for any $G_\Gamma$.
Our goal is to prove a ``rigidity'' version of this fact: we aim to define a Delsarte rigidity condition (fully in terms of LP constraints) which coincides with Schrijver's rigidity when the latter is restricted to graph classes derived from association schemes. 

First of all, we need to define what it means for a disjointness representation to preserve the scheme structure.
Let $T$ be a $(P,r)$-disjointness representation of a derived graph
$G=G_\Gamma$, with incidence space $\incspace=\C\1\oplus U$. 

\begin{definition}
\label{def:bm-incidence-module}
The representation $T$ is \emph{scheme-compatible} if there is a set
$\Lambda\subseteq[s]$ such that
$U=\bigoplus_{j\in\Lambda}E_j\C^X$.
We call $\Lambda$ the \emph{index set} of $T$.
\end{definition}

As in~\Cref{subsec:rigid-schrijver}, we need to take higher powers of a given association scheme, see e.g.~\cite[\S3]{song2002fusion}. 

\begin{definition}[\cite{song2002fusion}]
Let $\mathfrak X$ be an association scheme over $X$.
    For $m\geq1$, the \textit{tensor-power scheme} (or \textit{direct-power scheme}) $\mathfrak X^{[m]}$ is the association scheme over the set $X^m$ whose primal and dual bases are
\[
 A_{\bi}\coloneqq A_{i_1}\otimes\cdots\otimes A_{i_m},
 \qquad
 E_{\bj}\coloneqq E_{j_1}\otimes\cdots\otimes E_{j_m},
\]
where $\bi=(i_1,\ldots,i_m)$ and $\bj=(j_1,\ldots,j_m)$ belong to
$\{0,\ldots,s\}^m$.
\end{definition}

For a scheme-compatible representation, we set $\Lambda^{[m]}=\{\mathbf0\}\cup
 \{j\be_i\mid j\in\Lambda,\ i\in[m]\}$, so
\begin{align}
    \label{eqn_1653_2408}
 \incspace^{[m]}
 =\bigoplus_{\bj\in\Lambda^{[m]}}E_{\bj}\C^{X^m}.        
\end{align}
Note that the eigenvalue of $A_{\bi}$ on
$E_{\bj}\mathbb C^{X^m}$ is $P_{\bj,\bi}\coloneqq \prod_{\ell\in[m]}P_{j_\ell i_\ell}$.
Indeed, 
\begin{align}
\label{eqn_1807_1208}
    A_{\bi}E_{\bj}
    =
    \left(\bigotimes_{\ell\in[m]}A_{i_\ell}\right)\left(\bigotimes_{\ell\in[m]}E_{j_\ell}\right)
    =
    \bigotimes_{\ell\in[m]}A_{i_\ell}E_{j_\ell}
    =
    \bigotimes_{\ell\in[m]}P_{j_\ell i_\ell}E_{j_\ell}
    =
    \prod_{\ell\in[m]}P_{j_\ell i_\ell}\bigotimes_{\ell\in[m]}E_{j_\ell}
    =
    P_{\bj,\bi}E_\bj.
\end{align}
Every matrix $F_m$ in the Bose--Mesner algebra of the $m$-th tensor-power scheme has
unique expansions
\begin{align}
\label{eqn_1758_1208}
 F_m=\sum_{\bi}f_{\bi}^{(m)}A_{\bi}
    =\sum_{\bj}\lambda_{\bj}^{(m)}E_{\bj},
 \qquad
 \lambda_{\bj}^{(m)}=\sum_{\bi}f_{\bi}^{(m)}P_{\bj,\bi}.
\end{align}
Observe that
\begin{align}
\label{eqn_1748_2608}
    \langle A_\bi,F_m\rangle&=
    \sum_{\bj}\lambda_\bj^{(m)}\langle A_\bi,E_\bj\rangle
    =
    \sum_{\bj}\lambda_\bj^{(m)}\Tr(A_\bi E_\bj)
    =
    \sum_{\bj}\lambda_\bj^{(m)}P_{\bj,\bi}\Tr(E_\bj)
    =
    \sum_{\bj}\lambda_\bj^{(m)}P_{\bj,\bi}\rank(E_\bj),
\end{align}
where the first equality comes from~\cref{eqn_1758_1208}, the third comes from~\cref{eqn_1807_1208}, and the fourth is true since $E_\bj$ is a projector. 
We can now define a version of Schrijver-rigidity for association schemes. Let $\mu=r/|P|$.

\begin{definition}
\label{def_delsarte}
A scheme-compatible disjointness representation of $G_\Gamma$ with index set $\Lambda$ is \emph{Delsarte-rigid} if, for every $m\geq1$, there exists a
real symmetric matrix $F_m$ in the Bose--Mesner algebra of the $m$-th tensor-power scheme
such that, expressing $F_m$ as in~\cref{eqn_1758_1208}, 
\begin{align*}
 &\lambda_{\mathbf0}^{(m)}=1,\qquad
   \lambda_{\bj}^{(m)}\geq0\quad(\bj\ne\mathbf0),          \tag{D1}\\
 &f_{\mathbf0}^{(m)}=\mu,\qquad
   f_{\bi}^{(m)}\leq0
   \quad\bigl(\bi\notin\Gamma^m\cup\{\mathbf0\}\bigr),  \tag{D2}\\
 &\lambda_{\bj}^{(m)}=0
   \quad\Longleftrightarrow\quad
   \bj\in\Lambda^{[m]}\setminus\{\mathbf0\}.            \tag{D3}
\end{align*}
\end{definition}

It is easy to see directly from the definition above that, if $G_\Gamma$ admits a Delsarte-rigid representation, then Delsarte's bound in~\Cref{thm_delsarte_bound} must be tight. Indeed, take the certificate $F_1$ for $m=1$, and define $b_j\coloneqq\lambda_j^{(1)}\rank(E_j)$ for $j\in\{0,\dots,s\}$. Since $\rank(E_0)=\rank(J)=1$, (D1) implies $b_0=1$ and $\bb\geq 0$. Moreover, for each $i\in[s]\setminus\Gamma$, 
\begin{align*}
    (P^\top\bb)_i
    =
    \sum_{j}P_{ji}\lambda_j^{(1)}\rank(E_j)=\langle A_i,F_1\rangle=f_i^{(1)}\langle A_i,A_i\rangle,
\end{align*}
where we have used~\cref{eqn_1748_2608} and~\cref{eqn_2608_1840}. This is nonpositive by virtue of (D2). Finally, using (D2) again,
\begin{align*}
    \1^\top\bb=\sum_j\lambda^{(1)}_j\rank(E_j)
    =\operatorname{Tr}(F_1)=vf^{(1)}_0=\mu v.
\end{align*}
It follows that $\alpha_{\mathrm{Del}}(G_\Gamma)\leq\mu v$, while Observation~\ref{obs_independence_number_disjointness_representation} implies that $\alpha(G_\Gamma)\geq\mu v$. Hence,~\Cref{thm_delsarte_bound} implies $\alpha(G_\Gamma)=\alpha_{\mathrm{Del}}(G_\Gamma)$. 

We can now lift the equality between $\vartheta^-$ and
$\alpha_{\mathrm{Del}}$ to the level of rigid disjointness representations:
for any $G_\Gamma$, Delsarte-rigidity and Schrijver-rigidity coincide.

\begin{restatable}{lemma}{lemdelsarteiffschrijver}
\label{lem:delsarte-iff-schrijver}
Let $G=G_\Gamma$ be a derived graph of an association scheme.  Then a disjointness
representation of $G$ is Delsarte-rigid if, and only if, it is Schrijver-rigid.
\end{restatable}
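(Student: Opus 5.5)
The plan is to set up an explicit affine dictionary between Delsarte certificates $F_m$ and dual $\vartheta^-$ certificates $Q_m$ for $G^m$, namely
\[
Q_m = v^m F_m - J \quad\Longleftrightarrow\quad F_m = v^{-m}(Q_m+J),
\]
with $J$ the all-ones matrix on $X^m$. The direction from Schrijver to Delsarte also needs a symmetrisation step that moves an arbitrary certificate into the Bose--Mesner algebra. Throughout, I treat the representation as scheme-compatible with index set $\Lambda$, since Delsarte-rigidity is only defined in that case. I also use that $\mathbf x\not\sim_{G^m}\mathbf y$ with $\mathbf x\neq\mathbf y$ means exactly that $(\mathbf x,\mathbf y)\in R_{\bi}$ for some $\bi\notin\Gamma^m\cup\{\mathbf 0\}$, because adjacency in the direct power requires adjacency in every coordinate.

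\emph{Delsarte $\Rightarrow$ Schrijver.} Given $F_m$ satisfying (D1)--(D3), define $Q_m=v^mF_m-J$. Its diagonal entries are $v^m f^{(m)}_{\mathbf 0}-1=\mu v^m-1=\beta_m-1$. Its entries on non-edges are $v^mf^{(m)}_{\bi}-1\le -1$ by (D2). Since $J=v^mE_{\mathbf 0}$, we get $Q_m=\sum_{\bj\neq\mathbf 0}v^m\lambda^{(m)}_{\bj}E_{\bj}$, which is positive semidefinite by (D1). Its kernel is the sum of $E_{\mathbf 0}\C^{X^m}$ and the modules $E_{\bj}\C^{X^m}$ with $\lambda^{(m)}_{\bj}=0$. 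By (D3) and \eqref{eqn_1653_2408}, this kernel equals $\incspace^{[m]}$.

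\emph{Schrijver $\Rightarrow$ Delsarte.} Given $Q_m$ with $\ker Q_m=\incspace^{[m]}$, I would replace it by its orthogonal (Frobenius) projection onto the Bose--Mesner algebra of $\mathfrak X^{[m]}$:
\[
\Pi(Q_m)=\sum_{\bi}\frac{\langle A_{\bi},Q_m\rangle}{|R_{\bi}|}A_{\bi}=\sum_{\bj}\frac{\langle E_{\bj},Q_m\rangle}{\rank E_{\bj}}E_{\bj}.
\]
The two expressions agree because both bases are orthogonal bases of the same space, by \eqref{eqn_2608_1840} and \eqref{eqn_2608_1840_b}. The first expression shows that each entry of $\Pi(Q_m)$ is an average of the entries of $Q_m$ over a relation class. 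Hence the diagonal stays $\beta_m-1$ and non-edge entries stay $\le-1$. The second expression gives positive semidefiniteness, since $\langle E_{\bj},Q_m\rangle=\Tr(E_{\bj}Q_m)\ge0$.

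The step I expect to be the crux is identifying the kernel. For a PSD $Q_m$ and a projector $E_{\bj}$, we have $\Tr(E_{\bj}Q_m)=0$ if and only if $E_{\bj}\C^{X^m}\subseteq\ker Q_m$. Therefore $\ker\Pi(Q_m)$ is the sum of the primitive modules contained in $\ker Q_m=\incspace^{[m]}$. Because $\incspace^{[m]}$ is itself an orthogonal sum of primitive modules by \eqref{eqn_1653_2408} (this is where scheme-compatibility is used), those modules are exactly the $E_{\bj}\C^{X^m}$ with $\bj\in\Lambda^{[m]}$. So $\ker\Pi(Q_m)=\incspace^{[m]}$. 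In particular $\1\in\ker$, so the $E_{\mathbf 0}$-coefficient of $\Pi(Q_m)$ vanishes.

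Finally, set $F_m=v^{-m}(\Pi(Q_m)+J)$. This matrix is real and symmetric and lies in the Bose--Mesner algebra. Its coefficients are $\lambda^{(m)}_{\mathbf 0}=1$ and $\lambda^{(m)}_{\bj}=v^{-m}\Tr(E_{\bj}Q_m)/\rank E_{\bj}\ge0$ otherwise, which gives (D1). The entry computations above give $f^{(m)}_{\mathbf 0}=\mu$ and $f^{(m)}_{\bi}\le0$ off $\Gamma^m\cup\{\mathbf 0\}$, which gives (D2). The kernel identification gives (D3).
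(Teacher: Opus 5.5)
\textbf{Gap: scheme-compatibility is assumed, not proved.} Your Delsarte $\Rightarrow$ Schrijver direction is correct. So is your Schrijver $\Rightarrow$ Delsarte direction \emph{for scheme-compatible representations}. Both follow the paper's proof: the affine map $Q_m=v^mF_m-J$ and the Frobenius projection onto $\C[\mathfrak X^{[m]}]$.

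The gap is in the Schrijver $\Rightarrow$ Delsarte direction, where scheme-compatibility is part of what must be proved. Delsarte-rigidity is defined only for scheme-compatible representations. So to show that an arbitrary Schrijver-rigid disjointness representation is Delsarte-rigid, you must first show that $\incspace$ is a sum of primitive modules. Your step ``this is where scheme-compatibility is used'' presupposes exactly this.

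Without it, your trace argument only shows that $\ker\Pi(Q_m)$ is the sum of the primitive modules contained in $\ker Q_m$. Hence $\ker\Pi(Q_m)\subseteq\incspace^{[m]}$, possibly strictly. The certificate $Q_m$ is an arbitrary PSD matrix, and nothing a priori makes its kernel invariant under the Bose--Mesner algebra.

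\textbf{The fix: use the feature fibres.} The missing idea is to prove the reverse inclusion $\incspace^{[m]}\subseteq\ker\Pi(Q_m)$ directly from the independent feature fibres.
\begin{enumerate}
\item Each $S_p$ is independent, so $|S_p|\le\alpha(G)\le\vartheta^-(G)\le\mu v$, using $Q_1$. Since $\sum_p|S_p|=rv=|P|\mu v$, every fibre has size exactly $\mu v$.
\item The set $X^{i-1}\times S_p\times X^{m-i}$ is independent in $G^m$, has size $\mu v^m=\beta_m$, and has indicator $\iota_i(\chi_p)$.
\item For any dual $\vartheta^-$ certificate $Q'$ at value $\beta_m$ and any independent set $S$, you have $0\le\1_S^\top Q'\1_S\le|S|(\beta_m-|S|)$. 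When $|S|=\beta_m$ this is $0$, and positive semidefiniteness forces $\1_S\in\ker Q'$.
\item Apply this to $Q'=\Pi(Q_m)$, which you already showed is such a certificate. This gives $\incspace^{[m]}\subseteq\ker\Pi(Q_m)\subseteq\ker Q_m=\incspace^{[m]}$.
\item At $m=1$, this exhibits $\incspace$ as the kernel of an element of the Bose--Mesner algebra. Hence $\incspace$ is a direct sum of primitive modules, which gives scheme-compatibility and the index set $\Lambda$.
\end{enumerate}
This is how the paper closes the argument. After this step, your construction of $F_m$ and your verification of (D1)--(D3) go through unchanged.
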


A consequence of this is that checking applicability of our $\RE$-completeness criterion~\Cref{thm:criterion} is particularly simple if the given graph is derived from an association scheme:  the SDP condition of~\Cref{def:power-theta-rigid} can be safely replaced by the LP condition of~\Cref{def_delsarte}; the latter can be easily checked  as long as the matrix $P$ of eigenvalues of the scheme is known
(which will be the case for the families of graphs we will consider in~\Cref{section_examples}). 
We point out that the same mechanism was used in~\cite{ciardo2025semidefinite} to simplify the problem of exhibiting fooling certificates for a relaxation combining SDP and affine integer programming applied to the approximate graph homomorphism problem~\cite{brakensiek2018promise}.

\subsection{The classic metric association schemes}
Finally, 
we apply our framework to graphs derived from the Johnson, Grassmann, and Hamming schemes. In each scheme, we consider two natural derived graphs: the maximum-distance graph and the complement of the distance-one graph.

For the maximum-distance graphs of the Johnson and Grassmann schemes, which are the Kneser and $q$-Kneser graphs, our spectral condition 
has a particularly simple witness, given by a disjointness representation such that the non-constant part $U$ of the incidence space is the eigenspace of the adjacency matrix corresponding to its least eigenvalue $\tau$. We call such a representation \emph{Hoffman-rigid}, as formalised in~\Cref{def:hoffman-feature}. (In particular, similarly to the Schrijver and Delsarte cases, the existence of a Hoffman-rigid representation implies that the Hoffman ratio bound~\cite{haemers2021hoffman} on the independence number is tight.)
The maximum-distance Hamming graph is the categorical power $K_q^d$. Since it is homomorphically equivalent to $K_q$, its quantum homomorphism problem is equivalent to quantum $q$-colouring, whose $\RE$-completeness for $q\geq3$ is already known~\cite{culf2025re,culf2025existence}.

The complements of the distance-one graphs require more flexible certificates.
Indeed, when the scheme has diameter at least three (so that the two classes of derived graphs do not coincide), we show that Hoffman's ratio bound is not tight, so no Hoffman-rigid representation can exist and we need to use the full power of Delsarte.
We construct representations using $(k-1)$-subsets in the Johnson case, $(k-1)$-dimensional subspaces in the Grassmann case, and axis-parallel lines  
in the Hamming case. In general, the resulting incidence spaces contain several primitive modules on which the adjacency matrix has different eigenvalues. We therefore construct a suitable linear combination of the edge-relation matrices that acts as the common scalar $-1$ on all these nonconstant incidence modules, while keeping the remaining eigenvalues in the required interval.

Finally, the Kneser family includes the odd graphs
$O_n=\KG(2n-1,n-1)$, for $n\geq2$.
These graphs are $C_4$-free, so a result of~\cite{culf2025existence} allows us to pass from oracular to non-oracular commutativity gadgets. Our construction therefore settles the question posed therein about commutativity gadgets for odd graphs in the non-oracular setting, too, and establishes $\RE$-completeness in both models.

\subsection{Outlook}
\label{subsec_outlook}

The main technical contribution of this work is a spectral method for
establishing non-contextuality of quantum polymorphisms, combining an
equality analysis of Schrijver's bound on the projective-packing number with structural ideas from
Erd\H{o}s--Ko--Rado theory. 
While this method allows us to make progress towards a quantum Hell--Ne\v{s}et\v{r}il classification by settling the complexity of some natural graph classes,
it provably does not capture the whole landscape of $\RE$-complete quantum graph homomorphism problems for two reasons:
\begin{itemize}
\item
Some graphs admit no commutativity gadget and nevertheless give rise to
an $\RE$-complete quantum homomorphism problem. The diamond graph
$K_4$ minus an edge is a simple example: it admits contextual quantum
endomorphisms~\cite[Example~5.12]{culf2025existence}, but it is
homomorphically equivalent to $K_3$, so its quantum homomorphism problem
is $\RE$-complete~\cite{harris2024universality,culf2025re}. The diamond graph is not a core but, as shown below by the \textit{Moser spindle} example, this phenomenon
persists even for cores.
\item
Other graphs admit commutativity gadgets but fail our spectral
criterion. Odd cycles of length at least five provide examples:
they admit commutativity gadgets~\cite{CJM} and are thus $\RE$-complete, but Schrijver's bound
is not tight on them. Consequently, they admit no Schrijver-rigid representation, and~\Cref{thm:criterion} does not apply.
\end{itemize}

\paragraph{The Moser spindle.}
Let $M$ be the Moser spindle~\cite{moser_problems}, obtained from $C_5$ by replacing two
nonadjacent vertices by copies of $K_2$. More explicitly, its vertices
are partitioned into cyclically ordered fibres
\[
 F_0=\{a_0,a_1\},\qquad F_1=\{b\},\qquad
 F_2=\{c_0,c_1\},\qquad F_3=\{d\},\qquad F_4=\{e\},
\]
each fibre inducing a clique, consecutive fibres being completely joined, and with no other edges. An independent set meets at most two fibres and contains at most one vertex from each, so
$\alpha(M)=2$ and $\chi(M)\geq4$. Since $M-v$ is $3$-colourable for
every vertex $v$, the graph is $4$-vertex-critical and hence a core.

Within each fibre of size 2, the two vertices can be swapped
and this can be done in a way that the two swappings on $F_0$ and $F_2$ are not simultaneously diagonalisable. %
More precisely, choose noncommuting projectors $P,R$ on $\C^2$,
and define a quantum function $U$ by
\[
\begin{aligned}
 U_{a_i,a_i}&=P, & U_{a_i,a_{1-i}}&=I-P,\\
 U_{c_i,c_i}&=R, & U_{c_i,c_{1-i}}&=I-R
 \qquad (i\in\{0,1\}),
\end{aligned}
\]
with $U_{x,x}=I$ for $x\in\{b,d,e\}$ and all unlisted entries zero.
It is easy to see that this is a contextual quantum endomorphism, so $M$ admits no commutativity gadget.
Nevertheless, $\qCSP(M)$ is $\RE$-complete: the graph $M'$ that coincides with $M$ except that the edge between $d$ and $e$ is removed admits a q-definition in $M$ (see~\cite{CJM} for the introduction to this concept) and
therefore $\qCSP(M')$ reduces to $\qCSP(M)$, and $M'$ is homomorphically equivalent to $K_3$, whose $\qCSP$ is $\RE$-complete.

\paragraph{Two classification problems.}
These examples distinguish a quantum Hell--Ne\v{s}et\v{r}il classification from a characterisation of commutativity gadgets. The former asks whether $\qCSP(G)$ is $\RE$-complete for every finite
non-bipartite graph $G$; the latter asks which graphs have only non-contextual quantum polymorphisms. 

For the existence of commutativity gadgets, we believe a spectral characterisation remains a possibility (although, as argued above, it must necessarily go beyond our condition in~\Cref{thm:criterion}).
Commutativity gadgets are intrinsically ``global'', in the sense that the same gadget should work for any pair of vertices where commutation is required. This global requirement makes spectral rigidity properties natural
candidates for a characterisation. 
Concretely, we ask the following question:
\begin{question}
\label{q_hierarchy}
Is there a hierarchy of semidefinite relaxations for projective packings such that the existence of commutativity gadgets for graphs is characterised by a suitable rigidity condition at some finite level?
\end{question}

In contrast, quantum-sound reductions  can in principle make use of more general, ``non-uniform'' versions of Ji's commutativity gadgets, whose existence  appears unlikely to be captured via spectral tools alone.
Hence, we expect progress towards a complexity classification to come from combining spectral arguments with combinatorial methods, such as the contextuality bifurcations of~\cite{CJM}. We leave this direction to future work.

\section*{Acknowledgements}
The third, fourth, and fifth authors acknowledge funding from the project “Hamburg
Quantum Computing”, co-financed by ERDF of the European Union and by the Fonds of the Hamburg
Ministry of Science, Research, Equalities and Districts (BWFGB).

\section*{AI statement}
During the preparation of this work, the authors used ChatGPT to assist with literature search, exploration and checking of mathematical arguments, and drafting and revision of the manuscript.
Early partial results concerning Kneser graphs were obtained with the support of ChatGPT and subsequently improved by the authors.
The connection with association schemes, Erd\H{o}s-Ko-Rado theory, Schrijver's theta number, and Roberson's projective packings is due to the authors.
The authors take full responsibility for the results, proofs, references, and final text.

\section{Preliminaries on (multi)linear algebra}
\label{subsec_prelimns_multilinear_algebra}

Throughout the paper, graphs are finite, undirected, and loopless.  
For a positive integer $m$, we write $[m]=\{1,\ldots,m\}$.  
We let $\KG(n,k)$ denote the Kneser graph on the vertex set
$\binom{[n]}k$, in which two vertices are adjacent exactly when the corresponding sets
are disjoint. $\alpha(G)$ denotes the independence number of $G$, i.e., the size of the largest independent set of $G$.

For $m\geq 1$,
the categorical power $G^{m}$ of $G$ has vertex set $X^m$, with
\[
 (x_1,\ldots,x_m)\sim_{G^{m}}(y_1,\ldots,y_m)
 \quad\Longleftrightarrow\quad x_i\sim_Gy_i\quad(i\in[m]).
\]

We let $\1$ denote the all-one vector of suitable size. $J=\1\1^\top$ denotes the square all-one matrix. We denote the zero vector by $\mathbf0$ and the zero matrix of suitable size by $0$. We denote the identity matrix of suitable size by $I$.
Given two square matrices $A$ and $B$, we write $A \preceq B$ (or $B\succeq A$) to indicate that the matrix $B-A$ is positive semidefinite (PSD).
We let $\Tr(A)$ denote the trace of $A$.
For two complex matrices of the same size, their Frobenius inner
product is $\langle A,B\rangle=\operatorname{Tr}(A^*B)$.
We shall use the following standard result.
\begin{lemma}[\protect{\cite[\S2.4 Problem 12(c)]{horn2012matrix}}]
\label{lemma_commutator_horn_johnson}
Let $A,B$ be square complex matrices and suppose that $A$ is diagonalisable. Then $[A,B]=0$ if, and only if, $[A,[A,B]]=0$.
\end{lemma}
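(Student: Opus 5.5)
One direction is trivial: if $[A,B]=0$ then $[A,[A,B]]=[A,0]=0$. For the converse I would assume $[A,[A,B]]=0$ and use diagonalisability of $A$ to reduce to an entrywise computation. Write $A=SDS^{-1}$ with $D=\operatorname{diag}(d_1,\dots,d_n)$ and $S$ invertible, and set $B'=S^{-1}BS$. Conjugation by $S$ is an algebra automorphism of the matrix algebra, so it preserves commutators:
\[
S^{-1}[A,B]S=[D,B'],\qquad S^{-1}[A,[A,B]]S=[D,[D,B']].
\]
Hence it suffices to prove the statement when $A=D$ is diagonal.

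In the diagonal case I would compute entrywise:
\[
[D,B']_{k\ell}=(d_k-d_\ell)B'_{k\ell},\qquad [D,[D,B']]_{k\ell}=(d_k-d_\ell)^2B'_{k\ell}.
\]
If the second expression vanishes for all $k,\ell$, then for each pair either $d_k=d_\ell$, or $B'_{k\ell}=0$. In both cases $(d_k-d_\ell)B'_{k\ell}=0$. Therefore $[D,B']=0$, and conjugating back gives $[A,B]=0$.

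There is no real obstacle here. The one point to state explicitly is that diagonalisability is used exactly to obtain the entrywise formula. Without it the statement fails; for instance, a nonzero nilpotent Jordan block $A$ together with a suitable $B$ can satisfy $[A,[A,B]]=0$ while $[A,B]\neq0$. An equivalent, basis-free phrasing is also available. The operator $\operatorname{ad}_A=[A,\cdot\,]$ is diagonalisable whenever $A$ is, with eigenvalues $d_k-d_\ell$. For a diagonalisable operator, $\ker(\operatorname{ad}_A^2)=\ker(\operatorname{ad}_A)$.
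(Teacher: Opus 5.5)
Your proof is correct and follows essentially the same route as the paper's: diagonalise $A$, compute $[A,[A,B]]_{ij}=(\lambda_i-\lambda_j)^2b_{ij}$ entrywise, and conclude $(\lambda_i-\lambda_j)b_{ij}=0$ case by case. Your explicit check that conjugation preserves commutators, and your remark that the statement fails without diagonalisability (e.g.\ a nonzero nilpotent $A$ with $B$ chosen so that $[A,B]=A$), are both valid additions.
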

\begin{proof}
The forward implication is clear, so we prove the opposite one.
Choose a basis in which
$A=\operatorname{diag}(\lambda_1,\ldots,\lambda_n)$, and write
$B=(b_{ij})$. Then $[A,B]_{ij}=(\lambda_i-\lambda_j)b_{ij}$
and, consequently,
\[
 [A,[A,B]]_{ij}=(\lambda_i-\lambda_j)^2b_{ij}.
\]
The hypothesis therefore gives
$(\lambda_i-\lambda_j)^2b_{ij}=0$ for every $i,j$. If
$\lambda_i=\lambda_j$, then $[A,B]_{ij}=0$; if
$\lambda_i\neq\lambda_j$, then $b_{ij}=0$, and again
$[A,B]_{ij}=0$. Hence, $[A,B]=0$.
\end{proof}

For linear applications $A,B,C$ on the same vector space, the commutator
$[A,B]=AB-BA$ satisfies the \textit{Jacobi identity}
$[A,[B,C]]+[B,[C,A]]+[C,[A,B]]=0$.

Given a complex vector space $V$, its \emph{dual space} is $V^*=\operatorname{Hom}_{\C}(V,\C)$,
the vector space of complex-linear functionals on $V$. Given a subspace $S\subseteq V$,
the \emph{annihilator} of $S$ in $V^*$, denoted $S^0$, is the space of functionals $f\in V^*$
such that $f|_{S}=0$.
If $V$ is finite-dimensional and $W$ is any complex vector space, there is a canonical linear
isomorphism
\[
 \Theta_{V,W}\colon V\otimes W\to\operatorname{Hom}(V^*,W),
 \qquad
 \Theta_{V,W}(v\otimes w)(f)\coloneqq f(v)\,w.
\]
Taking $W=\C$ and identifying $V\otimes\C$ with $V$, this specialises to the canonical
double-dual isomorphism $V\cong V^{**}$, which sends $v\in V$ to the functional
$f\mapsto f(v)$ on $V^*$.

For a finite set $X$, we write $\C^X$ for the space of complex-valued functions on $X$, with its standard Euclidean
inner product denoted by $\langle-,-\rangle$. Given a space $ Z\subseteq\C^X$ and a
finite-dimensional complex vector space $ W$, we shall also identify $ Z\otimes W$
with a space of $ W$-valued functions on $X$ via
\[
  Z\otimes W\to W^X,
 \qquad
 f\otimes w\mapsto\bigl(x\mapsto f(x)\,w\bigr).
\]
Concretely, this map sends $u$ to $x\mapsto\Theta_{ Z, W}(u)(\mathrm{ev}_x)$,
where $\mathrm{ev}_x\in Z^*$ is the evaluation at $x$. Equivalently, if $w_1,\dots,w_d$ is any basis of
$W$, then a function $F\colon X\to W$, written uniquely as
$F(x)=\sum_{i=1}^d f_i(x)w_i$, belongs to $Z\otimes W$ if, and only if,
$f_i\in Z$ for every $i\in[d]$; note that this condition is independent of the chosen
basis of $ W$. Taking $ Z= W=\C^X$, we also obtain the
identification
\[
 \C^X\otimes\C^X\cong\C^{X\times X},
 \qquad (f\otimes g)(x,y)=f(x)g(y).
\]

\section{Tame representations and Schrijver-rigidity}
\label{sec:rigid-schrijver}

In this section, we give a proof of~\Cref{thm:criterion}. First, we shall derive some structural properties of the edge kernels of disjointness representations. Then, we will use them to obtain non-contextuality of quantum polymorphisms for any graph admitting a Schrijver-rigid tame representation.

\subsection{Decomposing the edge kernel}
\label{sect:representations-powers}

First of all, as anticipated in~\Cref{subsec:rigid-schrijver}, we show that any disjointness representation of a graph can be naturally lifted to a representation of its powers.

\begin{lemma}\label{representation-power}
Let $G=(X,E)$ be an undirected graph with a $(P,r)$-disjointness representation $T$.
Then the map $T^{[m]}\colon X^m\to\binom{[m]\times P}{r}$ defined by $T^{[m]}(x_1,\dots,x_m)\coloneqq\{(i,p)\mid p\in T_{x_i}\}$ is a disjointness representation of $G^m$.
\end{lemma}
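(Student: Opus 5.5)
We check the four requirements of \Cref{defn_disjointness_representation} for $T^{[m]}$ one at a time, taking the feature set to be $[m]\times P$.

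\emph{Size.} The sets $\{i\}\times T_{x_i}$ are pairwise disjoint for distinct $i$. Each has size $r$, since $T_{x_i}\in\binom Pr$. Hence $|T^{[m]}(\mathbf x)|=mr$.

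This means the statement should be read with $mr$ in place of $r$: $T^{[m]}$ is a $([m]\times P,\,mr)$-disjointness representation. This matches the later use $\beta_m=rv^m/|P|=(mr)v^m/(m|P|)$ in \Cref{def:power-theta-rigid}. We will note the correction and proceed with the parameters $([m]\times P,mr)$. The condition $1\le mr\le m|P|$ is inherited from $1\le r\le|P|$.

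\emph{Injectivity.} The coordinate $x_i$ is recovered from $T^{[m]}(\mathbf x)$ as follows. The fibre $\{p\mid (i,p)\in T^{[m]}(\mathbf x)\}$ equals $T_{x_i}$, and $T$ is injective. So if $T^{[m]}(\mathbf x)=T^{[m]}(\mathbf y)$, then $T_{x_i}=T_{y_i}$, hence $x_i=y_i$ for every $i$.

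\emph{Disjointness.} Suppose $\mathbf x\sim_{G^m}\mathbf y$. Then $x_i\sim_G y_i$ for every $i$, so $T_{x_i}\cap T_{y_i}=\emptyset$ by \eqref{eqn_1051_1408}. An element of $T^{[m]}(\mathbf x)\cap T^{[m]}(\mathbf y)$ would be a pair $(i,p)$ with $p\in T_{x_i}\cap T_{y_i}$, and no such pair exists.

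\emph{Coverage.} Given $(i,p)\in[m]\times P$, choose $x\in X$ with $p\in T_x$; this is possible because $P=\bigcup_x T_x$. Any $\mathbf x$ with $x_i=x$ (for instance the constant tuple) then satisfies $(i,p)\in T^{[m]}(\mathbf x)$.

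No step is a genuine obstacle. The only delicate point is the bookkeeping of the parameter: the representation is $([m]\times P,mr)$, not $([m]\times P,r)$.
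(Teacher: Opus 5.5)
Your proof is correct and follows the paper's argument: injectivity by recovering $T_{x_i}$ from the $i$-th fibre, disjointness coordinatewise from $x_i\sim_G y_i$, and coverage via the constant tuple $(x,\dots,x)$. Your size remark is also right, and the paper's proof does not address it: each $T^{[m]}(\mathbf x)$ has $mr$ elements, so the codomain should be $\binom{[m]\times P}{mr}$, and this leaves $\mu=r/|P|$, and hence $\beta_m=rv^m/|P|$ in the definition of Schrijver-rigidity, unchanged.
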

\begin{proof}
    If $S\coloneqq T^{[m]}(x_1,\dots,x_m)=T^{[m]}(y_1,\dots,y_m)$, then $\{p\mid (i,p)\in S\}=T_{x_i}=T_{y_i}$ so by injectivity of $T$ we get $x_i=y_i$ for all $i\in[m]$.
    It follows that $T^{[m]}$ is injective.
    Furthermore, if $(x_1,\dots,x_m)$ and $(y_1,\dots,y_m)$ are adjacent in $G^m$, then $T_{x_i}\cap T_{y_i}=\emptyset$ for all $i\in[m]$ and, thus, $T^{[m]}(x_1,\dots,x_m)\cap T^{[m]}(y_1,\dots,y_m)=\emptyset$.
    Finally, given an arbitrary feature $(i,p)\in [m]\times P$, there exists $x\in X$ such that $p\in T_x$ and therefore $(i,p)\in T^{[m]}(x,\dots,x)$ so that $[m]\times P=\bigcup_{\bf x} T^{[m]}(\bf x)$.
\end{proof}

Let $\incspace^{[m]}=\incspace(T^{[m]})=\Span\{\chi_{(i,p)} \mid i\in[m],p\in P\}\subseteq\C^{X^m}$ be the incidence space of the disjointness representation $T^{[m]}$.
Recall that $\C^{X^m}\cong (\C^X)^{\otimes m}$ by identifying $f_1\otimes\dots\otimes f_m\in (\C^X)^{\otimes m}$ with the vector whose component indexed by $(x_1,\dots,x_m)$ is $\prod_{i=1}^m f_i(x_i)$.
Let $\iota_i$ be the embedding $\C^{X}\to\C^{X^m}$ defined by $\iota_i(f)=\1^{\otimes(i-1)}\otimes f\otimes\1^{\otimes(m-i)}$.
As a vector, the coefficient of $(x_1,\dots,x_m)$ in $\iota_i(f)$ is therefore simply $f(x_i)$.
We also use the notation $\iota_i$ for the induced embedding $\C^{X}\otimes\C^{X}\to \C^{X^m}\otimes\C^{X^m}$.
Note that we have $\chi_{(i,p)} = \iota_i(\chi_p)$ and that $\iota_i$ preserves orthogonality, from which the next lemma follows.
\begin{lemma}\label{L-power}
    $\incspace^{[m]} = \C\1\oplus\bigoplus_{i=1}^m U_i$, where $U_i=\iota_i(U)$.
\end{lemma}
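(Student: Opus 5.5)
The plan is to prove the two inclusions separately, then show the sum is direct and orthogonal.

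For the inclusion $\incspace^{[m]}\subseteq\C\1\oplus\sum_i U_i$, recall that $\incspace^{[m]}$ is spanned by the functions $\chi_{(i,p)}=\iota_i(\chi_p)$. Since $\incspace=\C\1\oplus U$, each $\chi_p$ can be written as $\chi_p=c_p\1+u_p$ with $u_p\in U$. By linearity of $\iota_i$, and because $\iota_i(\1)=\1$, we get
\[
\chi_{(i,p)}=c_p\1+\iota_i(u_p)\in\C\1+U_i .
\]

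For the reverse inclusion, we first note that $\1\in\incspace^{[m]}$. This follows from \eqref{eqn_1648_0308} applied to $T^{[m]}$, or directly from $\sum_{p}\chi_{(1,p)}=\iota_1(r\1)=r\1$. Next, every $u\in U$ is a linear combination $u=\sum_p a_p\chi_p$. Hence $\iota_i(u)=\sum_p a_p\chi_{(i,p)}$ lies in $\incspace^{[m]}$. Together these give $\C\1+\sum_i U_i\subseteq\incspace^{[m]}$.

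It remains to show that the sum is direct, and indeed orthogonal. The key computation is the identity
\[
\langle\iota_i(f),\iota_j(g)\rangle=
\begin{cases}
|X|^{m-1}\langle f,g\rangle, & i=j,\\
|X|^{m-2}\langle f,\1\rangle\langle \1,g\rangle, & i\neq j.
\end{cases}
\]
It holds because the standard inner product on $(\C^X)^{\otimes m}$ factorises over tensor factors.

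Since $U\subseteq\1^\perp$, the $i\neq j$ case gives $U_i\perp U_j$ for $i\neq j$. Applying the same identity with $g=\1$ (so that $\iota_j(\1)=\1$) gives $U_i\perp\1$. Mutually orthogonal subspaces have a direct sum, so $\incspace^{[m]}=\C\1\oplus\bigoplus_i U_i$ with an orthogonal decomposition.

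All of these steps are routine. The only point requiring care is the inner-product factorisation in the case $i\ne j$; this is exactly the sense in which ``$\iota_i$ preserves orthogonality'' to $\1$. Injectivity of $\iota_i$, which ensures that $U_i\cong U$, follows from the $i=j$ case.
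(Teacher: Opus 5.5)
Your proposal is correct and follows essentially the same route as the paper. You identify $\incspace^{[m]}$ with $\C\1+\sum_i\iota_i(U)$ via $\chi_{(i,p)}=\iota_i(\chi_p)$ and $\iota_i(\1)=\1$, and you get directness from $\langle\iota_i(u),\iota_j(u')\rangle=|X|^{m-2}\langle u,\1\rangle\langle\1,u'\rangle=0$ for $i\neq j$ together with $U_i\perp\1$; the only difference is that you spell out the two inclusions, which the paper compresses into one line.
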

\begin{proof}
We have $\incspace^{[m]}=\Span\{\chi_{i,p}\mid i\in[m],p\in P\}=\sum \iota_i(\incspace)=\C\1_{X^m}+\sum\iota_i(U)$
and it remains to prove that the sum is direct.
Suppose that $\lambda\1_{X^m}+\sum_{i=1}^m\iota_i(u_i)=0$ for some $u_1,\dots,u_m\in U$.
Fix $j\in[m]$.
    Note that for $i\neq j$ we have $\langle \iota_i(u_i),\iota_j(u_j)\rangle = \sum_{\bx\in X^m}u_i(x_i)u_j(x_j) = |X|^{m-2}(\sum_x u_i(x))(\sum_x u_j(x))=0$
    since $u_i$ is orthogonal to $\C\1_X$.
Similarly, $\langle\lambda\1_{X^m},\iota_j(u_j)\rangle=0$.
It follows that $\langle \iota_j(u_j),\iota_j(u_j)\rangle =0$ and thus $\iota_j(u_j)=0$.
\end{proof}

We next show that for all $m\geq 1$, the edge kernel $\edgekernel^{[m]}=\edgekernel(T^{[m]})$
can be decomposed into $m$ summands from the edge kernel $\edgekernel$.

\lempowerkernel*

\begin{proof}
Write $\vec E=\{(x,y) \mid x\sim_Gy\}$ for the set of oriented edges of $G$ and set $R=\C^{\vec E}$. Consider the
linear maps $\alpha,\beta\colon U\to R$ defined by
$\alpha(u)(x,y)=u(x)$ and $\beta(u)(x,y)=u(y)$ for $u\in U$ and
$(x,y)\in\vec E$. Since $G$ has no isolated vertices, both maps are
injective.

We first show that $\C\1_{\vec E}+\alpha(U)+\beta(U)$ is a direct sum.
Suppose that $c\1_{\vec E}+\alpha(u)+\beta(v)=\bzero$ for some
$c\in\C$ and $u,v\in U$. 
Suppose that $x,x'$ are joined by a walk of length $2$, say $x\sim _G y\sim_G x'$.
Then evaluating the previous identity at $(x,y)\in\vec E$ and $(x',y)\in\vec E$, we obtain $c\1_{\vec E}+u(x)+v(y)=c\1_{\vec E}+u(x')+v(y)$ and therefore $u(x)=u(x')$.
Iterating this argument shows that $u(x)=u(x')$ whenever $x,x'$ are connected by a walk of even length.
Since $G$ is connected and non-bipartite, any two vertices are connected by such a walk and therefore $u$ is constant on $X$.
Since $U\perp\1$, it follows that $u=0$.
Similarly, we get $v=0$ and then $c=0$, hence the sum is direct.

Choose a linear complement $Z$ so that
\begin{align}\label{eqn_1138_1808}
 R=\C\1_{\vec E}\oplus\alpha(U)\oplus\beta(U)\oplus Z.
\end{align}
Let $\mathcal S=\{\C\1_{\vec E},\alpha(U),\beta(U),Z\}$. Taking the
$m$-fold tensor product of this decomposition %
gives the direct-sum decomposition
\begin{align}
\label{eqn_1621_1408}
 \C^{\vec E^m}
 \cong R^{\otimes m}
 =
 \bigoplus_{(S_1,\ldots,S_m)\in\mathcal S^m}
 \left(\bigotimes_{k=1}^m S_k\right).
\end{align}

Recall that
$\incspace^{[m]}=\C\1\oplus\bigoplus_{i\in[m]}U_i$ by~\Cref{L-power}. Distributing the
tensor product
$\incspace^{[m]}\otimes\incspace^{[m]}$ over these
direct sums shows that any $F\in\edgekernel^{[m]}\subseteq\incspace^{[m]}\otimes\incspace^{[m]}$ admits a unique expression 
\begin{align}
\label{eqn_1722_1408}
 F
 =
 \lambda J+\sum_{i\in[m]}A_i\otimes\1_{X^m}+\sum_{j\in[m]}\1_{X^m}\otimes B_j
   +\sum_{i,j\in[m]}H_{ij},
\end{align}%
where $\lambda\in\C$, $A_i\in U_i,B_j\in U_j$, and
$H_{ij}\in U_i\otimes U_j$.

For $K\in\C^{X^m}\otimes C^{X^m}$, define $\rho(K)$ to be $\rho(K)(e_1,\dots,e_m)\coloneqq K(\bf x,\bf y)$ where $e_i=(x_i,y_i)$ for all $i\in[m]$.
Write $\hat F=\rho(F)$ and use analogous notation for the restrictions of the terms in \cref{eqn_1722_1408}.
Since $\bx\sim_{G^m}\by$ exactly when $(x_k,y_k)\in\vec E$ for every
$k\in[m]$, the hypothesis of the lemma says that $\hat F=0$.

Fix distinct $i,j\in[m]$. Let $P_{ij}$ be the direct summand in
\cref{eqn_1621_1408} having $\alpha(U)$ in coordinate $i$,
$\beta(U)$ in coordinate $j$, and $\C\1_{\vec E}$ in every other
coordinate. Let $\pi_{ij}$ denote the coordinate projection onto
$P_{ij}$ associated with the direct-sum decomposition
in~\cref{eqn_1621_1408}.

We claim that $\pi_{ij}(\hat F)=\hat H_{ij}$.
Indeed, fix $a,b\in[m]$ and choose a rank-1 tensor decomposition
$H_{ab}=\sum_{\ell=1}^{N}
\iota_a(u_{\ell})\otimes \iota_b(v_{\ell})$, where
$u_{\ell},v_{\ell}\in U$.
For $\be=(e_1,\ldots,e_m)\in\vec E^m$, with
$e_i=(x_i,y_i)$ for $i\in[m]$, we have
\begin{align}
 \hat H_{ab}(\be)
 &= \sum_{\ell=1}^{N}\iota_a(u_\ell)(\bx)\iota_b(v_\ell)(\by)\notag\\
 &= \sum_{\ell=1}^{N} u_\ell(x_a)v_\ell(y_b)\notag\\
 \label{eqn_1703_1408a}&=\sum_{\ell=1}^{N}
 \alpha(u_{\ell})(e_a)
 \beta(v_{\ell})(e_b).
\end{align}
If $a\neq b$ (say, $a<b$), this shows that $\hat H_{ab}=\sum_{\ell=1}^N \1^{\otimes(a-1)}\otimes\alpha(u_\ell)\otimes\1^{\otimes (b-a-1)}\otimes\beta(v_\ell)\otimes\1^{\otimes(m-b)}$ and thus $\hat H_{ab}$ belongs to $P_{ab}$.
Its component in $P_{ij}$ is therefore zero unless $(a,b)=(i,j)$.
Suppose now $a=b$. 
By~\cref{eqn_1703_1408a}, $\hat H_{aa}(\be)$ depends only on $e_a$, so
\begin{align*}
    \hat H_{aa}\in\C\1_{\vec E}^{\otimes a-1}\otimes R\otimes\C\1_{\vec E}^{\otimes m-a}.
\end{align*}
This shows that 
every summand occurring in $\hat H_{aa}$ has
$\C\1_{\vec E}$ in every tensor-coordinate other than $a$. 
Hence
$\pi_{ij}(\hat H_{aa})=0$, because every nonzero element of $P_{ij}$ has nonconstant factors in
the two distinct coordinates $i$ and $j$.

Similarly, $\rho(\lambda\1_{X^m\times X^m})\in\C\1_{\vec E^m}$, $\rho(A_a\otimes\1_{X^m})\in \C\1_{\vec E}^{\otimes a-1}\otimes R\otimes\C\1_{\vec E}^{\otimes m-a}$, and $\rho(\1_{X^m}\otimes B_b)\in \C\1_{\vec E}^{\otimes b-1}\otimes R\otimes\C\1_{\vec E}^{\otimes m-b}$.
None of these terms has a component in $P_{ij}$.
It follows that $\pi_{ij}(\hat F)=\hat H_{ij}$.

Let
$\iota_{ij}\colon\alpha(U)\otimes\beta(U)\to P_{ij}$
be the canonical isomorphism obtained by inserting
$\1_{\vec E}$ in every coordinate other than $i$ and $j$. By
\cref{eqn_1703_1408a}, we have $0
 =\pi_{ij}(\hat F)
 =\hat H_{ij}
 =\iota_{ij}\!\left(
   (\alpha\otimes\beta)(H_{ij})
   \right).$
The maps $\alpha$ and $\beta$ are injective, and tensor products of
injective linear maps between complex vector spaces are injective.
Since $\iota_{ij}$ is also injective, we conclude that $H_{ij}=0$.
This holds for every $i\neq j$.

For each $i\in[m]$, define
$h_i=A'_i\otimes\1_X+\1_X\otimes B'_i+H'_{ii}$,
where $A'_i, B'_i\in U$ and $H'_{ii}\in U\otimes U$ are such that $A_i=\iota_i(A'_i)$, $B_i=\iota_i(B'_i)$, and $ H_{ii}=\iota_i(H'_{ii})$.
Then $h_i\in \incspace\otimes\incspace$, and
\cref{eqn_1722_1408} reduces to
\begin{align}
\label{eqn_1728_14_08}
 F=\lambda J+\sum_{i\in[m]}\iota_i(h_i).
\end{align}
For an element $k$ of $\C^X\otimes\C^X$, define similarly as above $\hat k\in R$ to be $\hat k(e) = k(x,y)$ for every $e=(x,y)\in\vec E$.
Note that $(\rho\circ\iota_i)(k)(e_1,\dots,e_m)=\hat k(e)$ for all $e_1,\dots,e_m\in\vec E$.
We claim that each $\hat h_i$ is constant on $\vec E$.
Fix $i\in[m]$ and two ordered edges $e,e'\in\vec E$.
For each $a\neq i$, choose an ordered edge $e_a\in\vec E$. Evaluate
\cref{eqn_1728_14_08} on the tuples $(e_1,\dots,e_{i-1},e,e_{i+1},\dots,e_m)$ and $(e_1,\dots,e_{i-1},e',e_{i+1},\dots,e_m)$.
This gives
$\lambda+\hat h_i(e)+\sum_{a\neq i}\hat h_a(e_a)=0$ and
$\lambda+\hat h_i(e')+\sum_{a\neq i}\hat h_a(e_a)=0$, since $\rho(F)=0$ by assumption.
Subtracting gives $h_i(e)=h_i(e')$.
Thus there is some $\lambda_i\in\C$ such that $\hat h_i(e)=\lambda_i$ for every $e\in\vec E$.
Evaluating \cref{eqn_1728_14_08} on any $(e_1,\dots,e_m)\in\vec E$ gives $\lambda+\sum_{i\in[m]}\lambda_i=0$.

Define
$D_i=h_i-\lambda_i(\1_X\otimes\1_X)$.
Then $D_i\in \incspace\otimes\incspace$ and $D_i(x,y)=0$ whenever $x\sim_G y$, thus $D_i\in\edgekernel$.
Moreover,
\[
 F
 =\lambda\1_{X^m\times X^m}+\sum_{i\in[m]}\iota_i\bigl(D_i+\lambda_i(\1_X\otimes\1_X)\bigr)
 =\sum_{i\in[m]}\iota_i(D_i),
\]
which proves that $\edgekernel^{[m]} = \Span \bigcup_i\iota_i(\edgekernel)$.

We now argue that the sum $\bigoplus \iota_i(\edgekernel)$ is indeed direct.
Suppose that
\begin{align}
    \label{sum_is_direct}
    \sum_i\iota_i(D_i)=0
\end{align} for some $D_1,\dots,D_m\in\edgekernel$.
Fix edges $e_a=(x_a,y_a)$ for all $a\in[m]$.
Let $i\in[m]$, and $x,x',y,y'\in X$ be arbitrary.
Let $\bx=(x_1,\dots,x_{i-1},x,x_{i+1},\dots,x_m)$ and $\bx'=(x_1,\dots,x_{i-1},x',x_{i+1},\dots,x_m)$ and similarly for $\by$ and $\by'$.
Considering the coefficients of the tensor in~\cref{sum_is_direct} at $(\bx,\by)$ and $(\bx',\by')$, we obtain $D_i(x,y)=D_i(x',y')$ and thus $D_i$ is a constant tensor.
Moreover, since $D_i\in\edgekernel$, it must be the zero tensor, which concludes the proof.
\end{proof}

A way to understand~\Cref{lem:power-kernel} in elementary terms is the following.
Suppose that $F\colon X^m\times X^m\to\C$ is a function such that $F(\bx,\by)=0$ whenever $\bx$ and $\by$ are adjacent in $G^m$, and such that $F$ is a linear combination of functions of the form $(\bx,\by)\mapsto \indfunction{p\in T_{x_i}}\indfunction{q\in T_{y_j}}$ for $i,j\in[m]$ and $p,q\in P$.
Then there exist $F_1,\dots,F_m\colon X\times X\to\C$ such that $F_i(x,y)=0$ for every $i\in[m]$ whenever $x$ and $y$ are adjacent in $G$; each is in the span of the functions $(x,y)\mapsto \indfunction{p\in T_x}\indfunction{q\in T_y}$ for $p,q\in P$; and  $F(\bx,\by)=\sum_{i=1}^mF_i(x_i,y_i)$ holds.
We also note the following simple corollary of~\Cref{lem:power-kernel}.

\begin{corollary}\label{cor:power-kernel}
Let $\mathcal W$ be a vector space, and suppose that $G$ is connected and non-bipartite.
Then $\edgekernel^{[m]}\otimes\mathcal W = \bigoplus_{i=1}^m \iota_i(\edgekernel)\otimes\mathcal W$.
\end{corollary}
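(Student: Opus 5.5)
The plan is to deduce the corollary from \Cref{lem:power-kernel} by tensoring the direct-sum decomposition with $\mathcal W$. Tensoring with a fixed vector space commutes with direct sums. The statement only makes sense for a finite-dimensional $\mathcal W$, or at least for one where the identification with $\mathcal W$-valued functions is used only as an abstract tensor product, so I will argue at the level of abstract tensor products of complex vector spaces.

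First, I will record that $\edgekernel^{[m]}\subseteq\C^{X^m}\otimes\C^{X^m}$ and that each $\iota_i(\edgekernel)$ is a subspace of it. By \Cref{lem:power-kernel}, the natural addition map
\[
\textstyle\bigoplus_{i\in[m]}\iota_i(\edgekernel)\to\edgekernel^{[m]}
\]
from the external direct sum is a linear isomorphism. Applying the functor $-\otimes\mathcal W$, which is exact over a field, I get an isomorphism
\[
\textstyle\bigl(\bigoplus_i\iota_i(\edgekernel)\bigr)\otimes\mathcal W\cong\edgekernel^{[m]}\otimes\mathcal W.
\]
Since tensor product distributes over finite direct sums, the left-hand side is $\bigoplus_i\bigl(\iota_i(\edgekernel)\otimes\mathcal W\bigr)$.

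Second, I will check that this abstract isomorphism is compatible with the inclusions. Each $\iota_i(\edgekernel)\otimes\mathcal W$ embeds in $\edgekernel^{[m]}\otimes\mathcal W$, because the inclusion $\iota_i(\edgekernel)\hookrightarrow\edgekernel^{[m]}$ remains injective after tensoring with $\mathcal W$. The composite map is then just the sum of these subspaces. The isomorphism therefore says exactly that $\edgekernel^{[m]}\otimes\mathcal W$ is the internal direct sum of the subspaces $\iota_i(\edgekernel)\otimes\mathcal W$.

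A concrete alternative, which I would give to match the paper's style, is to pick a basis $(w_k)$ of $\mathcal W$. An element of $\edgekernel^{[m]}\otimes\mathcal W$ is uniquely $\sum_k F_k\otimes w_k$ with $F_k\in\edgekernel^{[m]}$. I decompose each $F_k=\sum_i\iota_i(D_{i,k})$ with $D_{i,k}\in\edgekernel$ using \Cref{lem:power-kernel} and regroup. For uniqueness, I expand any relation $\sum_i X_i=0$ with $X_i\in\iota_i(\edgekernel)\otimes\mathcal W$ in the basis $(w_k)$. Each coefficient then gives a relation in $\bigoplus_i\iota_i(\edgekernel)$, which forces all components to vanish.

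There is no real obstacle here. The only point needing care is that injectivity of the inclusions survives tensoring with $\mathcal W$, which holds because we are working over a field.
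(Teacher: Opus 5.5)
Your proposal is correct and is essentially the paper's argument: the paper states this as an immediate consequence of \Cref{lem:power-kernel} without writing out a proof, and what it has in mind is tensoring the direct-sum decomposition with $\mathcal W$, using that over a field this preserves injectivity of the inclusions and distributes over finite direct sums. Your basis-expansion version also works for an arbitrary, possibly infinite-dimensional, $\mathcal W$, since every element involves only finitely many basis vectors, so your hedge about finite-dimensionality is unnecessary.
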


\subsection{Two more edge-kernel lemmas}
We next prove two simple facts on the edge kernel of a disjointness representation, which shall be useful later in~\Cref{section_examples}.
They are not needed for the proof of~\Cref{thm:criterion}, so readers
who wish to proceed directly to that proof may skip ahead
to~\Cref{sec_non_contextuality_schrijver}.

\begin{lemma}
\label{lem:local-incidence-edge-kernel}
Let $T$ be a $(P,r)$-disjointness
representation of a graph $G$. 
Suppose that $(i)$ for every $y\in X$ the functions
in $\incspace$ supported on $X\setminus N_G(y)$ are precisely
$\Span\{\chi_p \mid p\in T_y\}$; $(ii)$ every
$S_p$ contains at least three vertices; and $(iii)$
$T_x\cap T_y=\{p\}$ for any distinct $x,y\in S_p$.  Then the edge kernel of $T$ is
\[
 \edgekernel
   =\Span\{\chi_p\otimes\chi_p \mid p\in P\}.
\]
In particular, every matrix in $\edgekernel$ is symmetric.
\end{lemma}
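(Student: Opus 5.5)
The inclusion $\supseteq$ is immediate. Each $\chi_p\otimes\chi_p$ lies in $\incspace\otimes\incspace$, and if $x\sim_G y$ then $T_x\cap T_y=\emptyset$, so $\chi_p(x)\chi_p(y)=0$. Symmetry of elements of $\edgekernel$ then follows from the equality, since each generator $\chi_p\otimes\chi_p$ is symmetric. All the work is therefore in the inclusion $\subseteq$.

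\textbf{First step: control the rows.} Take $F\in\edgekernel$ and fix $y\in X$. The column function $F(\cdot,y)$ lies in $\incspace$: write $F=\sum_k f_k\otimes g_k$ with $f_k,g_k\in\incspace$, so $F(\cdot,y)=\sum_k g_k(y)f_k$. This function vanishes on $N_G(y)$, so by hypothesis $(i)$,
\[
F(\cdot,y)\in\Span\{\chi_p\mid p\in T_y\}.
\]
Symmetrically, $F(x,\cdot)\in\Span\{\chi_q\mid q\in T_x\}$. Hence $F(x,y)$ can only be nonzero when $x$ lies in some $S_p$ with $p\in T_y$, and $y$ lies in some $S_q$ with $q\in T_x$.

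\textbf{Second step: express $F$ via coefficients.} For each $y$ write $F(\cdot,y)=\sum_{p\in T_y}c_{p}(y)\chi_p$. I want to show that $c_p(y)$ depends only on $p$, giving $F=\sum_p c_p\,\chi_p\otimes\chi_p$. The plan is to compare the row and column descriptions. Fix $p$ and two distinct $y,y'\in S_p$. By $(iii)$, $T_y\cap T_{y'}=\{p\}$, and I aim to show $c_p(y)=c_p(y')$ using a third vertex $x\in S_p$, which exists by $(ii)$.

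Evaluate at $(x,y)$: since $T_x\cap T_y=\{p\}$ for $x\neq y$ in $S_p$, only the term $q=p$ survives, so $F(x,y)=c_p(y)$. Likewise $F(x,y')=c_p(y')$.

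Now use the row at $x$. We have $F(x,\cdot)=\sum_{q\in T_x}d_q(x)\chi_q$, and evaluating at $y,y'\in S_p\setminus\{x\}$ gives $F(x,y)=d_p(x)=F(x,y')$, again by $(iii)$. Therefore $c_p(y)=c_p(y')$ for all $y,y'\in S_p$ that have some third point $x\in S_p$ distinct from both, which $(ii)$ guarantees. So $c_p$ is constant on $S_p$, say equal to $c_p$, and
\[
F(\cdot,y)=\sum_{p\in T_y}c_p\chi_p, \qquad\text{that is,}\qquad F=\sum_p c_p\,\chi_p\otimes\chi_p.
\]

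\textbf{Main obstacle.} The delicate point is the diagonal case $x=y$, where $(iii)$ does not apply and the $\chi_p$ may be linearly dependent, so the $c_p(y)$ need not be unique. The derivation above only uses off-diagonal entries with $x\neq y$, and it identifies each $F(\cdot,y)$ with $\sum_{p\in T_y} c_p\chi_p$ at all points $x\neq y$. Two functions in $\Span\{\chi_p\mid p\in T_y\}$ that agree off $y$ differ by a multiple of $\delta_y$ lying in that span, so the diagonal needs a separate check.

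To settle it, I would choose the coefficients $c_p(y)$ in any representation and carry out the averaging argument directly with the values $F(x,y)$ for $x\in S_p\setminus\{y\}$. These values determine the coefficient of $\chi_p$ uniquely on $S_p\setminus\{y\}$, which is nonempty by $(ii)$. The difference $G=F-\sum_p c_p\chi_p\otimes\chi_p$ is then supported on the diagonal. Each of its columns lies in $\Span\{\chi_p\mid p\in T_y\}$ and is supported on $\{y\}$. Restricting such a column to $S_p$, for $p\in T_y$, kills every other $\chi_q$ with $q\in T_y$ except at $y$, by $(iii)$. The restriction is therefore a multiple of $\chi_p$ vanishing at the other points of $S_p$, hence zero. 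So $G=0$.
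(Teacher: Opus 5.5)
Your proof is correct and matches the paper's argument: expand columns and rows via hypothesis $(i)$, then use a third point $x\in S_p$ (from $(ii)$) and $(iii)$ to get $c_p(y)=F(x,y)=d_p(x)=F(x,y')=c_p(y')$. Your separate diagonal check is valid but unnecessary, for two reasons.
\begin{enumerate}
\item The expansion $F(\cdot,y)=\sum_{p\in T_y}c_p(y)\chi_p$ is an identity of functions, so it also holds at the point $y$.
\item The computation $c_p(y)=F(x,y)$ for $x\in S_p\setminus\{y\}$ already forces the coefficients. Indeed, $\{\chi_p\}_{p\in T_y}$ is linearly independent under $(ii)$--$(iii)$.
\end{enumerate}
This is why the paper concludes $F=\sum_p\gamma_p\,\chi_p\otimes\chi_p$ directly.
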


\begin{proof}
Observe first that, for each $p\in P$ and each pair $x,y$ of adjacent vertices of $G$, we have $(\chi_p\otimes\chi_p)(x,y)=\chi_p(x)\chi_p(y)=0$ since every feature fibre of $T$ is independent.
Hence, $\Span\{\chi_p\otimes\chi_p \mid p\in P\}\subseteq\edgekernel$.
Conversely, take $F\in\edgekernel$. For each $x,y\in X$, consider the functions $\vartheta_y,\varphi_x\in\C^X$ defined by $\vartheta_y(z)=F(z,y)$ and $\varphi_x(z)\coloneqq F(x,z)$. Since $F\in\edgekernel$, both $\vartheta_y$ and $\varphi_x$ belong to $\incspace$. Moreover, $\vartheta_y$ is supported on $X\setminus N_G(y)$ and
$\varphi_x$ is supported on $X\setminus N_G(x)$.
Hence, by $(i)$, there exist coefficients $c_p(y)$ and $e_p(x)$ for $p\in P$ such that
\begin{align*}
\vartheta_y=\sum_{p\in T_y}c_p(y)\chi_p
\qquad\mbox{and}\qquad
    \varphi_x=\sum_{p\in T_x}e_p(x)\chi_p.
\end{align*}
It follows that
\begin{align*}
 F(x,y)&=
 \vartheta_y(x)
 =
 \sum_{p\in T_y}c_p(y)\chi_p(x)
 =
 \sum_{p\in T_x\cap T_y}c_p(y)\\
 &=
 \varphi_x(y)
       =\sum_{p\in T_x}e_p(x)\chi_p(y)
       =
      \sum_{p\in T_x\cap T_y}e_p(x). 
\end{align*}
Now fix some $p\in P$. Pick any $z\neq z'\in S_p$, and choose $z''\in S_p\setminus\{z,z'\}$, which exists by $(ii)$. Using $(iii)$, we find $c_p(z)=F(z'',z)=e_p(z'')=F(z'',z')=c_p(z')$ and, thus, $c_p$ is constant over $S_p$. By the same argument, $e_p$ is constant over $S_p$. Moreover, the two values are equal; call them $\gamma_p$.
As a consequence, 
\[
 F(x,y)=\sum_{p\in T_x\cap T_y}\gamma_p
       =\sum_{p\in P}\gamma_p\chi_p(x)\chi_p(y),
\]
whence we deduce that $F\in\Span\{\chi_p\otimes\chi_p \mid p\in P\}$.
\end{proof}

\begin{lemma}
\label{lem:diagonal-feature-rigidity}
Let $T$ be a $(P,r)$-disjointness
representation of a graph $G$.   Suppose that its
feature functions are linearly independent and $\edgekernel
   =\Span\{\chi_p\otimes\chi_p \mid p\in P\}$.
Then $\diagonalSubspace=\C\omega_T$, where  $\omega_T\in\C^{X\times X}$ is the matrix defined by $\omega_T(x,y)\coloneqq 
|T_x\cap T_y|$.  Moreover, if every vertex of $G$ has degree at least two, then $T$ is tame.
\end{lemma}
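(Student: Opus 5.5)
Since $\edgekernel=\Span\{\chi_p\otimes\chi_p\mid p\in P\}$ and the $\chi_p$ are linearly independent, the tensors $\chi_p\otimes\chi_p$ are linearly independent in $\incspace\otimes\incspace$. So every $F\in\edgekernel$ can be written uniquely as $F=\sum_p\gamma_p\,\chi_p\otimes\chi_p$, which means $F(x,y)=\sum_{p\in T_x\cap T_y}\gamma_p$. In particular $F(x,x)=\sum_{p\in T_x}\gamma_p$.

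\emph{Identifying $\diagonalSubspace$.} First, $\omega_T=\sum_p\chi_p\otimes\chi_p$ lies in $\edgekernel$ and has constant diagonal $r$, so $\C\omega_T\subseteq\diagonalSubspace$. For the converse, take $F\in\diagonalSubspace$ with $\sum_{p\in T_x}\gamma_p=c$ for every $x$. I want to show that $\gamma$ is constant. Set $\gamma'=\gamma-(c/r)\1_P$. Then $\sum_{p\in T_x}\gamma'_p=0$ for all $x$, i.e. $\sum_p\gamma'_p\chi_p=0$ as a function on $X$. Linear independence of the feature functions gives $\gamma'=0$, hence $F=(c/r)\,\omega_T$. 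This shows $\diagonalSubspace=\C\omega_T$.

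\emph{Tameness.} Condition (R1) is immediate, because $\omega_T$ is symmetric. For (R2), let $F=c\,\omega_T$ and fix $x$. Suppose $F(a,b)=F(a,a)$ for all $a,b\in N_G(x)$. Since $x$ has degree at least two, I can pick distinct $a,b\in N_G(x)$. Then $c\,|T_a\cap T_b|=c\,r$. Injectivity of $T$ together with $|T_a|=|T_b|=r$ gives $|T_a\cap T_b|<r$, so $c=0$ and $F=0$.

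I expect no real obstacle here. The only point needing care is that the linear-independence hypothesis is used twice: once to make the expansion coefficients $\gamma_p$ unique, and once to turn the constant-diagonal condition into the statement that $\gamma$ is constant.
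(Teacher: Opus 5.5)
Your proof is correct and follows essentially the same route as the paper. In both, the identity $\sum_p\chi_p=r\1$ plus linear independence of the $\chi_p$ gives $\diagonalSubspace=\C\omega_T$, and (R2) follows from two distinct neighbours $a,b$ with $|T_a\cap T_b|<r$ by injectivity of $T$; note that uniqueness of the coefficients $\gamma_p$ is not actually needed, only the existence of some expansion.
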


\begin{proof}
If
$F=\sum_pc_p\chi_p\otimes\chi_p$ has constant diagonal $\lambda$, then
the identity~\eqref{eqn_1648_0308} gives
$\sum_p(c_p-\lambda/r)\chi_p=\bzero$.  Linear independence therefore gives
$c_p=\lambda/r$ for every $p$, and hence $F=\frac{\lambda}{r}\sum_p\chi_p\otimes\chi_p=\frac{\lambda}{r}\omega_T$.
This shows that $\diagonalSubspace\subseteq\C\omega_T$, and the converse inclusion is trivial.

Suppose now that the minimum degree of $G$ is at least two.
Note first that $\omega_T$ is symmetric.
Fix a vertex $u$ and choose distinct $a,b\in N_G(u)$. 
Suppose, for the sake of contradiction, that $\Delta_u(\omega_T)$ is $0$, so that in particular $\omega_T(a,b)=\omega_T(a,a)$ and therefore $|T_a\cap T_b|=|T_a|$. Since each $T_x$ has size $r$, this means that $T_a=T_b$, which contradicts injectivity of $T$. Hence, $\edgekernel$ satisfies both (\ref{tag:R1}) and (\ref{tag:R2}), so $T$ is tame.
\end{proof}

\subsection{Equality in Roberson's bound and global commutativity for tame representations}
\label{sec_non_contextuality_schrijver}

A key ingredient of our proof of~\Cref{thm:criterion} is a tightness analysis of Roberson's
bound in~\Cref{thm_alpha_p_schrijver}, which describes the case where the inequality involving the projective packing number and Schrijver theta is crisp.
Recall the identification 
\begin{align}
\label{eqn_1604_1408}
 \mathbb C^{X}\otimes W
 \cong\{ W\text{-valued functions on }X\}
\end{align}
described in~\Cref{subsec_prelimns_multilinear_algebra}.

\lemoperatorthetarigidity*

\begin{proof}
Let $d=\dim H$,
let $t=\operatorname{val}(R)=\frac{1}{d}\sum_{x\in X}\Tr(R_x)$, and consider the $X\times X$
matrix $M$ whose $xy$-entry is $M_{xy}=\frac{1}{d}\Tr(R_xR_y)$.
Set also
$P=M/t$. It was proved in~\cite[Lemma~6.8.2]{roberson2013variations} that $P$ is feasible for the primal SDP formulation of
$\vartheta^-(G)$, in that it satisfies $\operatorname{Tr}(P)=1$,
$P_{xy}=0$ whenever $x\sim_Gy$, $P\geq0$ entrywise, and
$P\succeq0$. Moreover, the same proof shows that
$\langle J,P\rangle\geq t$.
On the other hand, since $Q$ is a dual $\vartheta^-$ certificate at value $t$, weak
duality gives $\langle J,P\rangle\leq t$. Consequently,
$\langle J,P\rangle=t$, so $P$ and $Q$ are both optimal.
Complementary slackness then gives $\langle Q,P\rangle=0$, and hence
$\langle Q,M\rangle=0$.

Consider now the tensor $\Xi\in\C^X\otimes\End(H)$ associated with $R$; explicitly, $\Xi=\sum_{x\in X}\be_x\otimes R_x$. 
Taking the inner product on $\C^X\otimes\End(H)$ given by the tensor product of the standard Euclidean inner product $\langle -, -\rangle$ on $\C^X$ and the usual Frobenius inner product
$\langle A,B\rangle_H=\Tr(A^*B)$ on $\End(H)$, we find
\begin{align}
\label{eqn_1923_1408}
\notag
 \left\langle
 \Xi,
 \bigl(Q\otimes\operatorname{id}_{\End(H)}\bigr)\Xi
 \right\rangle
 &=
 \sum_{x,y\in X}\left\langle
 \be_x\otimes R_x,
 \bigl(Q\otimes\operatorname{id}_{\End(H)}\bigr)\be_y\otimes R_y
 \right\rangle\\
 \notag
&=
 \sum_{x,y\in X}\langle
 \be_x\otimes R_x,
 Q\be_y\otimes R_y\rangle\\
 &=
 \sum_{x,y\in X}Q_{xy}\Tr(R_xR_y)
 =
 d\langle Q,M\rangle
 =
 0.
\end{align}
Since $Q\succeq0$, the operator
$Q\otimes\operatorname{id}_{\End(H)}$ is positive semidefinite.
The vanishing of its quadratic form in~\cref{eqn_1923_1408} therefore implies
\[
 \Xi\in
 \ker\!\left(Q\otimes\operatorname{id}_{\End(H)}\right)
 =
 (\ker Q)\otimes\End(H),
\]
as required.
\end{proof}

The next lemma shall be used to prove that the projectors in a perfect quantum strategy associated with an $m$-ary quantum polymorphism (see~\Cref{defn_quantum_polymorphism} in~\Cref{subsec_proving_main_technical_thm}) must be fully commuting whenever the first factor of each ``section'' of the strategy (i.e., each function $\mathbf x\mapsto P_{\mathbf x,p}$) belongs to the $m$-dimensional incidence space $\incspace^{[m]}$ of a tame disjointness representation.

\lemdegreeonecommutativity*

\begin{proof}
Recall that $\diagonalSubspace$ is the subspace of the edge kernel $\edgekernel$ consisting of all $F\in\edgekernel$ whose diagonal is constant.
Let $\diagonalSubspace^*$ be the dual space of $\diagonalSubspace$, and consider the linear functional $\delta\in\diagonalSubspace^*$ that associates to $F\in\diagonalSubspace$ the common diagonal value of $F$. Also, for $a,b\in X$, consider the linear functional $\varepsilon_{a,b}\in\diagonalSubspace^*$ defined by $\varepsilon_{a,b}(F)\coloneqq F(a,b)$.
Thus, $\delta=\varepsilon_{a,a}$ and the choice does not depend on $a\in X$.
For fixed $x\in X$, consider the space
\[
 L_x=\Span\{\delta-\varepsilon_{a,b} \mid a,b\in N_G(x)\}
       \subseteq\diagonalSubspace^*.
\]
Recall also the definition of the operator $\Delta_x\colon \diagonalSubspace\to\C^{N_G(x)\times N_G(x)}$ given in~\cref{eqn_18_08_1510}, and note that $\Delta_x(-)_{a,b}=\delta-\varepsilon_{a,b}$. 
The annihilator of $L_x$ in $\diagonalSubspace^{**}\simeq\diagonalSubspace$ is the space
\begin{align*}
    L_x^0
    &=
    \{\phi\in\diagonalSubspace^{**}\mid\phi|_{L_x}=0\}\\
    &=
    \{\phi\colon\diagonalSubspace^*\to\C \mid \phi(\delta-\varepsilon_{a,b})=0\;\forall a,b\in N_G(x)\}\\
    &
    \simeq\{F\in \diagonalSubspace\mid (\delta-\varepsilon_{a,b})(F)=0\;\forall a,b\in N_G(x)\}\\
    &=
    \{F\in \diagonalSubspace\mid (\Delta_x F)_{a,b}=0\;\forall a,b\in N_G(x)\}\\
    &=
    \{F\in \diagonalSubspace\mid \Delta_x F=0\}\\
    &=\ker\Delta_x.
\end{align*}

Since $\diagonalSubspace$ is
finite-dimensional,  $L_x^0$ is isomorphic to the orthogonal complement of $L_x$ in $\diagonalSubspace^*$. The fact that the representation is tame requires that $\Delta_x$ is injective for each $x$; i.e., $\ker\Delta_x=\{0\}$. It follows that $L_x^0=\{0\}$, so $L_x=\diagonalSubspace^*$.

Next, fix $p,q\in P$, and consider the commutator map $C_{p,q}\colon X^m\times X^m\to\End(H)$ defined by   
$C_{p,q}(\mathbf x,\mathbf y)
       \coloneqq[P_{\mathbf x,p},P_{\mathbf y,q}]$. We claim that $C_{p,q}$
belongs to
$\incspace^{[m]}\otimes\incspace^{[m]}\otimes\End(H)$.
To see why, observe that by condition $1.$ there exist finite collections
$(f_i)_{i\in I},(g_j)_{j\in J}\subseteq\incspace^{[m]}$ and $(A_i)_{i\in I},(B_j)_{j\in J}\subseteq\End(H)$ such that
$P_{\mathbf x,p}=\sum_{i\in I} f_i(\mathbf x)A_i$ and
$P_{\mathbf y,q}=\sum_{j\in J} g_j(\mathbf y)B_j$ for each $\bx,\by\in X^m$.
By bilinearity of the commutator,
\[
 C_{p,q}(\mathbf x,\mathbf y)
 =[P_{\mathbf x,p},P_{\mathbf y,q}]
 =\sum_{i,j}f_i(\mathbf x)g_j(\mathbf y)[A_i,B_j].
\]
This means that $C_{p,q}$ admits the decomposition 
\begin{align*}
    C_{p,q}=\sum_{i,j}f_i\otimes g_j\otimes[A_i,B_j]\in\incspace^{[m]}\otimes\incspace^{[m]}\otimes \End(H),
\end{align*}
which proves the claim. 
Furthermore, condition $3.$ states that $C_{p,q}(\mathbf x,\mathbf
y)=0$ whenever $\mathbf x\sim_{G^m}\mathbf y$.
It follows that $C_{p,q}\in\edgekernel^{[m]}\otimes\End(H)$.
We can then apply~\Cref{cor:power-kernel}
with $\mathcal W=\End(H)$, %
from which we get that there exist $D^{p,q}_i\in\edgekernel\otimes\End(H)$ for $i\in[m]$ such that $C_{p,q}=\sum_i(\iota_i\otimes\mathrm{id}_{\mathcal W})(D^{p,q}_i)$.
Thus,
\begin{align}
\label{eqn_1622_1708}
 [P_{\mathbf x,p},P_{\mathbf y,q}]
 =\sum_{i=1}^mD_i^{p,q}(x_i,y_i).
\end{align}
Furthermore, condition $2.$ implies that
 $\sum_iD_i^{p,q}(x_i,x_i)=0$ for each $\bx\in X^m$.
We can now vary one coordinate at a time to see that each $D_i^{p,q}$ has
constant diagonal.
Indeed, for any $i\in[m]$, choose $\bx,\by\in X^m$ that only differ at position $i$.
Evaluating the previous sum at $\bx$ and $\by$ and subtracting, we get $D_i^{p,q}(x_i,x_i)=D_i^{p,q}(y_i,y_i)$.
Thus, $D_i^{p,q}\in\diagonalSubspace\otimes\End(H)$.

If $p=q$, condition~\textup{(\ref{tag:R1})} makes the right side of~\cref{eqn_1622_1708}
symmetric in $\mathbf x,\mathbf y$, whereas the left side is
antisymmetric.
Hence, 
\begin{align}
\label{eqn_1647_1708}
[P_{\mathbf x,p},P_{\mathbf y,p}]=0
\end{align}
for each $\mathbf x,\mathbf y\in X^m$.

Suppose now that $p\neq q$.
Let $\Theta(D_i^{p,q})\colon\diagonalSubspace^*\to\End(H)$ correspond to $D_i^{p,q}$ under the  canonical identification
$\Theta\colon\diagonalSubspace\otimes\End(H)\cong\operatorname{Hom}(\diagonalSubspace^*,\End(H))$ described in~\Cref{subsec_prelimns_multilinear_algebra}, so that in particular
\begin{align}
\label{eqn_1709_1708}
    \Theta(D_i^{p,q})(\varepsilon_{a,b})
    =
    D_i^{p,q}(a,b).
\end{align}
Fix a tuple $\mathbf z\in X^m$ and a coordinate $j\in[m]$. 
We claim that
\begin{align}
    \label{eqn_claim_1756_1708}
    [P_{\bz,p},D_j^{p,q}(a,b)]=0
\end{align}
for each $a,b\in X$.
For any $\bx,\by\in X^m$,~\cref{eqn_1647_1708} and the Jacobi
identity give
\begin{align}
    \label{eqn_1726_1708}
 [P_{\mathbf z,p},[P_{\mathbf x,p},P_{\mathbf y,q}]]
 = [P_{\mathbf x,p},[P_{\mathbf z,p},P_{\mathbf y,q}]].    
\end{align}
Choose now $a,b\in N_G(z_j)$.
By definition of the edge kernel, $\varepsilon_{z_j,a}-\varepsilon_{z_j,b}$ is the zero functional on $\diagonalSubspace$, and thus
\begin{align}
    \label{eqn_1724_1708}
\Theta(D_j^{p,q})(\varepsilon_{z_j,a}-\varepsilon_{z_j,b})=0.
\end{align}
Take two tuples $\by,\by'$ such that $y_j=a$, $y_j'=b$, and $y_i=y_i'$ for each $i\neq j$.
We find
\begin{align}
\label{eqn_xxxx_xxxx}
    [P_{\bz,p},P_{\by,q}]-[P_{\bz,p},P_{\by',q}]
    &=
    \sum_{i\in[m]}(D_i^{p,q}(z_i,y_i)-D_i^{p,q}(z_i,y_i'))
    =
    \sum_{i\in[m]}\Theta(D_i^{p,q})(\varepsilon_{z_i,y_i}-\varepsilon_{z_i,y_i'})=0
\end{align}
by~\cref{eqn_1724_1708}, so that $[P_{\bz,p},P_{\by,q}]=[P_{\bz,p},P_{\by',q}]$.
Choosing $\bx$ so that $x_j=a$ and using that $\epsilon_{a,a}=\delta$, we deduce from the above and~\eqref{eqn_1726_1708} that 
\begin{align*}
    [P_{\mathbf z,p},[P_{\mathbf x,p},P_{\mathbf y,q}]]
    =
    [P_{\mathbf z,p},[P_{\mathbf x,p},P_{\mathbf y',q}]]
\end{align*}
and, thus, by~\cref{eqn_xxxx_xxxx},
\[
 [P_{\mathbf z,p},
       \Theta(D_j^{p,q})(\delta-\varepsilon_{a,b})]=0.     
\]
Recalling that $L_{z_j}=\diagonalSubspace^*$, this means that
$[P_{\mathbf z,p},\Theta(D_j^{p,q})(\varepsilon)]=0$ for each $\varepsilon\in\diagonalSubspace^*$. Applying~\cref{eqn_1709_1708} with $\varepsilon=\varepsilon_{a,b}$ for arbitrary $a,b\in X$, this proves the claimed~\cref{eqn_claim_1756_1708}.

Combining~\cref{eqn_1622_1708} and~\cref{eqn_claim_1756_1708}, we deduce that 
$[P_{\mathbf x,p},P_{\mathbf y,q}]$ commutes with $P_{\mathbf x,p}$ for each $\bx,\by\in X^m$. Since projectors are diagonalisable, we can conclude via~\Cref{lemma_commutator_horn_johnson} that $[P_{\mathbf x,p},P_{\mathbf y,q}]=0$, and the proof is finished.
\end{proof}

\subsection{Proving Theorem~\ref{thm:criterion}}
\label{subsec_proving_main_technical_thm}

We can finally complete the proof of the main technical result of this paper, which we restate below for convenience.

\thmcriterion*

First, we describe the needed polymorphic tools.

\begin{definition}[\cite{CJM}]
\label{defn_quantum_polymorphism}
    Let $G=(X,E)$ be a graph. An $m$-ary \textit{quantum polymorphism} of $G$, for $m\geq 1$, is a quantum homomorphism $Q:G^m\qto G$. Explicitly, $Q$ is a quantum function $X^m\qto X$ satisfying the following two conditions:
\begin{itemize}
    \item $Q_{(x_1,\dots,x_m),x}Q_{(y_1,\dots,y_m),y}=0$ whenever $\{x_i,y_i\}\in E$ for each $i\in [m]$ and $\{x,y\}\not\in E$;
    \item
    $[Q_{(x_1,\dots,x_m),x},Q_{(y_1,\dots,y_m),y}]=0$ for each $x,y\in X$ whenever $\{x_i,y_i\}\in E$ for each $i\in [m]$.
\end{itemize} 
A quantum polymorphism is \emph{non-contextual} if all its
projectors commute.
\end{definition}

\begin{definition}[\protect{\cite{ji2013binary}; see also~\cite{culf2025re,culf2025existence,CJM}}]\label{comm-gadget}
    Fix a graph $H=(X_H,E_H)$.
    A \emph{commutativity gadget} for $H$ is a graph $G=(X_G,E_G)$ with two distinguished vertices $u,v\in X_G$ satisfying the following properties:
    \begin{itemize}
        \item For any quantum function $Q\colon\{u,v\}\qto X_H$ such that $Q_{u,a}$ and $Q_{v,b}$ commute for all $a,b\in X_H$, % 
        there exists a quantum homomorphism $Q'\colon G\qto H$ extending $Q$.
        \item For every quantum homomorphism $Q\colon G\qto H$ and all $a,b\in X_H$, it holds that $[Q_{u,a},Q_{v,b}]=0$.
    \end{itemize}
\end{definition}

We shall use the following result, which holds in the more general setting of arbitrary relational structures.
\begin{theorem}[\cite{CJM}]
    A graph $G$ admits a commutativity gadget if, and only if, all of its quantum polymorphisms are non-contextual.
\end{theorem}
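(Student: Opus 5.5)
The statement has two directions. In each, I would use the obvious way to transport quantum homomorphisms along classical ones.

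\emph{Gadget $\Rightarrow$ non-contextual.} Let $(K,u,v)$ be a commutativity gadget for $G$, and let $Q\colon G^m\qto G$ be a quantum polymorphism. Fix $\bx,\by\in X^m$; we must show that $[Q_{\bx,a},Q_{\by,b}]=0$ for all $a,b$.
\begin{enumerate}
\item For each $i\in[m]$, apply the first gadget property on a one-dimensional Hilbert space to the classical map $u\mapsto x_i$, $v\mapsto y_i$. Scalar projectors commute trivially, so this yields a classical homomorphism $h_i\colon K\to G$ with $h_i(u)=x_i$ and $h_i(v)=y_i$.
\item Bundle these into $h=(h_1,\dots,h_m)\colon K\to G^m$. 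This is a homomorphism by the definition of the categorical power, and $h(u)=\bx$, $h(v)=\by$.
\item Define the composite $Q\circ h$ by $(Q\circ h)_{w,c}\coloneqq Q_{h(w),c}$. If $w\sim_K w'$, then $h(w)\sim_{G^m}h(w')$. Hence both the orthogonality condition $(i)$ and the commutation condition $(ii)$ of a quantum homomorphism transfer verbatim, and $Q\circ h\colon K\qto G$ is a quantum homomorphism.
\item The second gadget property now gives $[Q_{\bx,a},Q_{\by,b}]=0$.
\end{enumerate}
Since $\bx,\by$ were arbitrary, all projectors of $Q$ commute.

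\emph{Non-contextual $\Rightarrow$ gadget.} I would take the canonical gadget $K=G^{n}$ with $n=|X|^2$, coordinates indexed by pairs $(a,b)\in X^2$. The distinguished vertices are $u=(a)_{(a,b)}$ and $v=(b)_{(a,b)}$, the tuples of first and second coordinates respectively. These are distinct whenever $|X|\geq2$.
\begin{enumerate}
\item \emph{Second property.} Every quantum homomorphism $G^n\qto G$ is an $n$-ary quantum polymorphism, hence non-contextual by assumption. In particular its $u$- and $v$-projectors commute.
\item \emph{First property.} Let $Q\colon\{u,v\}\qto X$ have commuting measurements. Decompose $H=\bigoplus_{(a_0,b_0)}H_{a_0,b_0}$, where $H_{a_0,b_0}$ is the range of $Q_{u,a_0}Q_{v,b_0}$; these ranges are mutually orthogonal and sum to $H$ by commutativity.
\item On each summand $H_{a_0,b_0}$, use the classical projection $\pi_{(a_0,b_0)}\colon G^n\to G$. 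It is a homomorphism and sends $u\mapsto a_0$ and $v\mapsto b_0$.
\item Set $Q'_{\bz,c}=\sum_{(a_0,b_0)}\indfunction{z_{(a_0,b_0)}=c}\,\Pi_{a_0,b_0}$, where $\Pi_{a_0,b_0}$ is the projector onto $H_{a_0,b_0}$. This is a direct sum of classical homomorphisms: all its projectors are diagonal in a common decomposition, so they commute. Adjacency constraints hold blockwise, so $Q'$ is a quantum homomorphism $G^n\qto G$.
\item It extends $Q$: $Q'_{u,c}=\sum_{b_0}\Pi_{c,b_0}=Q_{u,c}$, and similarly $Q'_{v,c}=Q_{v,c}$.
\end{enumerate}

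The steps are largely routine. The points needing care are two. First, the oracular commutation condition $(ii)$ must survive both composition with a classical homomorphism and the block direct sum; it does, since adjacency is preserved and block-diagonal projectors commute. Second, the degenerate case $|X|=1$ has to be handled separately. I expect the main subtlety to be the first direction: existence of the classical $h_i$ must be extracted from the gadget's extension property in dimension one. This is exactly what guarantees that $\bx,\by$ can be hit simultaneously.
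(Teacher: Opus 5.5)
Your proposal is correct and follows essentially the same route as the paper, which does not reprove this theorem but cites it from \cite{CJM}; the paper's introduction notes that the gadget there is a categorical power of $G$, matching your $G^{|X|^2}$ with the two projection tuples $u,v$, and your converse (classical extensions in dimension one, assembled into $K\to G^m$ and composed with $Q$) is the natural counterpart. Two loose ends: you flag the case $|X|=1$ but never settle it (there $G^n$ is a single vertex, but the edgeless graph on two distinct vertices $u,v$ is a gadget), and your first direction tacitly uses $u\neq v$, which the gadget definition must require anyway, since otherwise a one-vertex graph would be a trivial gadget for every graph.
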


Non-bipartite graphs give rise to NP-complete CSPs~\cite{hell1990complexity}.
In particular, unconditionally on $\operatorname{P}\neq\operatorname{NP}$, it was shown in~\cite{bulatov2005h} that any non-bipartite graph pp-constructs $\operatorname{3-SAT}$; see, e.g.,~\cite{barto2017polymorphisms} for the terminology. Combining this with~\cite[Corollary~46]{CJM} and~\cite[Theorem~50]{CJM} we have that, if $G$ additionally admits a commutativity gadget, then $\qCSP(\operatorname{3-SAT})$ (which is known to be $\RE$-complete by~\cite{ji2021mip,mastel2024two}) reduces in logspace to $\qCSP(G)$. As a consequence, we obtain the following.

\begin{corollary}
\label{thm_nonbipartite_noncontextual_undecidable}
If all quantum polymorphisms of a non-bipartite graph $G$ are non-contextual, then $\qCSP(G)$ is $\RE$-complete. 
\end{corollary}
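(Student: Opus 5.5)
The corollary follows by chaining results already quoted, so the plan is mostly bookkeeping. Assume every quantum polymorphism of the non-bipartite graph $G$ is non-contextual. By the theorem of~\cite{CJM} stated just above, $G$ then admits a commutativity gadget. This is the only place the hypothesis is used.

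The next step is hardness. Since $G$ is non-bipartite, the result of~\cite{bulatov2005h} (via the Hell--Ne\v{s}et\v{r}il dichotomy~\cite{hell1990complexity}) says that $G$ pp-constructs $\operatorname{3-SAT}$. This statement is classical and purely structural, so it needs no complexity assumption.

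We then lift the classical reduction to the quantum setting. By~\cite[Corollary~46, Theorem~50]{CJM}, a pp-construction combined with a commutativity gadget for the target gives a logspace reduction from $\qCSP(\operatorname{3-SAT})$ to $\qCSP(G)$. The gadget is exactly what keeps the reduction sound against quantum strategies. By~\cite{ji2021mip,mastel2024two}, $\qCSP(\operatorname{3-SAT})$ is $\RE$-hard, and therefore so is $\qCSP(G)$.

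It remains to show membership in $\RE$. To semi-decide whether an input $F$ satisfies $F\qto G$, enumerate dimensions $d$ and search for a finite-dimensional solution. The defining conditions of~\Cref{def:oracular-quantum-homomorphism} (projectors, PVM sums, orthogonality and commutation relations) form a finite system of polynomial equations over the reals. Their solvability in each fixed $d$ is decidable by Tarski's quantifier elimination, so enumerating $d$ gives a semi-decision procedure.

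The main point needing care is matching the oracular, finite-dimensional model of~\cite{CJM} with the model used in the $\RE$-hardness results for $\operatorname{3-SAT}$. I expect the cited theorems to be stated in a compatible setting, so the corollary should follow directly.
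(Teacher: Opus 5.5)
Your proposal follows the paper's argument: the characterisation from \cite{CJM} turns non-contextuality into a commutativity gadget, and Bulatov's pp-construction of $\operatorname{3-SAT}$ combined with \cite[Corollary~46, Theorem~50]{CJM} gives a logspace reduction from the $\RE$-complete problem $\qCSP(\operatorname{3-SAT})$ to $\qCSP(G)$. Your membership argument is correct: you enumerate the dimension and decide each fixed-dimension real polynomial system by Tarski's quantifier elimination, which makes explicit the upper bound that the paper leaves implicit.
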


\begin{proof}[Proof of~\Cref{thm:criterion}]
Let $T$ be a tame Schrijver-rigid $(P,r)$-disjointness representation of $G$.
Recall that $v=|X|$, $\mu=\frac{r}{|P|}$, and $S_p$ denotes the feature fibre of $p\in P$.                   
Fix $m\geq1$, and let $Q\colon G^{m}\qto G$ be an $m$-ary quantum
polymorphism of $G$ over a nontrivial Hilbert space $H$. For $\bx\in X^m$ and $p\in P$, define the operators
\[
       P_{\mathbf x,p}
       \coloneqq\sum_{a\in S_p}Q_{\mathbf x,a}.                    
\]
Each $P_{\mathbf x,p}$ is a projector, since it is a sum of projectors lying in a single PVM.
Note also that $[P_{\bx,p},P_{\bx,q}]=0$ for each $p,q\in P$.
Moreover, since $|T_a|=r$ for
every $a\in X$,
\[
 \sum_{p\in P}P_{\mathbf x,p}
 =\sum_{a\in X}|T_a|Q_{\mathbf x,a}=rI.                  
\]
The
oracular condition also gives
$[P_{\mathbf x,p},P_{\mathbf y,q}]=0$ whenever $\mathbf x\sim_{G^m}\mathbf y$.
We now aim to prove that condition (1) of~\Cref{lem:degree-one-commutativity} holds.
To that end, we use the projective-packing bound.
Observe that, for any fixed $p\in P$, we have $P_{\mathbf x,p}P_{\mathbf y,p}=0$
whenever $\mathbf x\sim_{G^m}\mathbf y$,
since any two vertices in the independent set $S_p$ are non-adjacent. Hence, the family
$(P_{\mathbf x,p})_{\mathbf x\in X^m}$ is a projective packing of $G^m$, whose value is
\[
       t_p\coloneqq\frac{1}{\dim H}\sum_{\mathbf x\in X^m}\Tr(P_{\mathbf x,p}).
\]
Let now $Q_m$ be a dual $\vartheta^-$ certificate of $G^m$ at value $\mu v^m$ with $\ker Q_m=\incspace^{[m]}$ witnessing that $T$ is Schrijver-rigid. We know from~\Cref{thm_alpha_p_schrijver} that
\begin{align}
\label{eqn_1030_2108}
    t_p\leq \alpha_p(G^m)
    \leq\vartheta^-(G^m)\leq \mu v^m
\end{align}
for each $p\in P$. 
On the other hand, summing over $p$ gives
\begin{align*}
    \sum_{p\in P}t_p
    =
    \frac{1}{\dim H}\sum_{\bx\in X^m}\Tr(rI)=rv^m=|P|\mu v^m,
\end{align*}
so~\cref{eqn_1030_2108} holds with equality for every $p$. We can then apply~\Cref{lem:operator-theta-rigidity} to conclude that  the function
$\mathbf x\mapsto P_{\mathbf x,p}$
lies in $\ker Q_m\otimes\End(H)=\incspace^{[m]}\otimes\End(H)$ for each $p$. This means that the projectors $P_{\bx,p}$ verify all conditions  of~\Cref{lem:degree-one-commutativity}, and we deduce that they all commute with each other.

Fix $\bx\in X^m$ and $a\in X$,
and label the elements of $T_a$ by $p_1,\dots,p_r$.
Orthogonality and idempotency of the members of the PVM $(Q_{\bx,b})_{b\in X}$ gives
\begin{align*}
    \prod_{p\in T_a}P_{\bx,p}
    &=
    \prod_{p\in T_a}\sum_{b\in S_p} Q_{\bx,b}
    =
    \left(\sum_{b_1\in S_{p_1}}Q_{\bx,b_1}\right)\cdot\left(\sum_{b_2\in S_{p_2}}Q_{\bx,b_2}\right)\cdot\dots\cdot\left(\sum_{b_r\in S_{p_r}}Q_{\bx,b_r}\right)\\
    &=
    \sum_{b\in \bigcap_{j\in[r]} S_{p_j}}Q_{\bx,b}
    =
    \sum_{\substack{b\in X\\T_a\subseteq T_b}}Q_{\bx,b}.
\end{align*}
All sets $T_b$ have size $r$, so $T_a\subseteq T_b$ implies
$T_a=T_b$.  Injectivity of the disjointness representation then gives $b=a$.  Thus,
\[
Q_{\mathbf x,a}
=\prod_{p\in T_a}P_{\mathbf x,p}. 
\]
Every projector $Q_{\bx,a}$ is then a product of globally commuting projectors.  Therefore, all projectors in the family $(Q_{\mathbf x,a})_{\bx\in X^m,a\in X}$ commute with each other, and $Q$
is non-contextual.  Applying~\Cref{thm_nonbipartite_noncontextual_undecidable}, we deduce that $\qCSP(G)$ is $\RE$-complete.
\end{proof}

\section{Schrijver-rigidity in association schemes}
\label{sec:association-schemes}

In this section, we shall fix a (symmetric) association scheme $\mathfrak X=\{A_0,A_1,\dots,A_s\}$ and follow the notation of~\Cref{subsec_overview_association_schemes}.
Recall that, for a set $\Gamma\subseteq[s]$, the derived graph $G_\Gamma$ has
vertex set $X$ and adjacency matrix
$A_\Gamma=\sum_{i\in\Gamma}A_i$.
It is regular, and its eigenvalue on $E_j\C^X$ is
$\eta_j(\Gamma)=\sum_{i\in\Gamma}P_{ji}$.  Its complement is also
derived, with relation set $[s]\setminus\Gamma$.
Note that, if $G_\Gamma$ is derived from the original scheme and $G_\Gamma^m$ is its $m$-th categorical power, then the adjacency matrix of $G_\Gamma^m$ is
\[
    A(G_\Gamma^{m})=A(G_\Gamma)^{\otimes m}
    =
        \left(\sum_{i\in\Gamma}A_i\right)^{\otimes m}
        =
    \sum_{\bi\in\Gamma^m}A_{\bi},          
\]
so $G_\Gamma^{m}$ is derived from the tensor-power scheme $\mathfrak X^{[m]}$ with relation set $\Gamma^m\subseteq [s]^m$.
We write
\[
    E_0^{(m)}\coloneqq E_{\mathbf0}=E_0^{\otimes m}=|X|^{-m}J
\]
when the underlying set $X$ and the value of $m$ are clear.

Let $T$ be a $(P,r)$-disjointness representation of a derived graph
$G=G_\Gamma$, with incidence space $\incspace=\C\1\oplus U$. As always, write
$v=|X|$ and $\mu=r/|P|$. Recall the definition of a scheme-compatible representation given in~\Cref{subsec_overview_association_schemes}.
Observe that, for such a representation,
\begin{align*}
 U_i
 &= (E_0\C^X)^{\otimes(i-1)}\otimes U\otimes
    (E_0\C^X)^{\otimes(m-i)}
 =\bigoplus_{j\in\Lambda}E_{j\be_i}\C^{X^m},
\end{align*}
where $U_i$ is given in~\Cref{L-power} and $\Lambda$ is the index set of the representation (see~\Cref{def:bm-incidence-module}).

\lemdelsarteiffschrijver*

\begin{proof}
Suppose first that $T$ is a
Delsarte-rigid $(P,r)$-disjointness representation.  Fix $m\geq1$, put $N=v^m$, and let $F_m$ be a
a witness of Delsarte-rigidity (as in~\Cref{def_delsarte}).  Set $Q_m\coloneqq NF_m-J$.
Since $J=NE_{\mathbf0}$, condition~\textup{(D1)} gives
$Q_m\succeq0$, while~\textup{(D2)} gives diagonal entries
$\mu N-1$ and entries at most $-1$ on distinct nonadjacent pairs.
Hence, $Q_m$ is a dual $\vartheta^-$ certificate for $G^m$ at value $\mu N=\frac{rv^m}{|P|}$.
Furthermore, by~\textup{(D3)} and~\eqref{eqn_1653_2408},
\[
 \ker Q_m
 =
 \bigoplus_{\mathbf j\in\Lambda^{[m]}}
 E_{\mathbf j}\mathbb C^{X^m}
 =
 \incspace^{[m]}.
\]
Thus $Q_m$ is the certificate required by Schrijver-rigidity in~\Cref{def:power-theta-rigid}.

Conversely, suppose that $T$ is Schrijver-rigid. Since every feature fibre
$S_p$ is independent and the certificate $Q_1$ gives
$\vartheta^-(G)\leq\mu v$, 
\Cref{thm_alpha_p_schrijver} implies that
$|S_p|\leq\alpha(G)\leq\vartheta^-(G)\leq\mu v$.
Reasoning as in Observation~\ref{obs_independence_number_disjointness_representation}, we find
$\sum_{p\in P}|S_p|=rv=|P|\mu v$, and it follows that
$|S_p|=\mu v$ for every $p\in P$.

For each $\bi\in\{0,\ldots,s\}^m$, let $R_{\bi}$ denote the binary relation over $X^m$ corresponding to $A_\bi$, and let $\Pi_m$ be the Frobenius-orthogonal
projection onto the Bose--Mesner algebra $\C[\mathfrak X^{[m]}]$ of $\mathfrak X^{[m]}$.
Recall from~\Cref{subsec_overview_association_schemes} that both families $(A_{\mathbf i})_{\bi}$ and $(E_{\mathbf j})_\bj$ are Frobenius-orthogonal bases of $\C[\mathfrak X^{[m]}]$.
Hence, for each matrix $M\in\C^{X^m\times X^m}$, we have
\begin{align}
\label{eqn_1719_2408}
 \Pi_m M
 =
 \sum_{\mathbf i}
 \frac{\langle A_{\mathbf i},M\rangle}
      {\langle A_{\mathbf i},A_{\mathbf i}\rangle}
 A_{\mathbf i}
 =
 \sum_\bi\frac{1}{|R_\bi|}\left(\sum_{(\bu,\bv)\in R_\bi}M_{\bu,\bv}\right)A_\bi
 =
  \sum_{\mathbf j}
 \frac{\langle E_{\mathbf j},M\rangle}
      {\langle E_{\mathbf j},E_{\mathbf j}\rangle}
 E_{\mathbf j}
 =
 \sum_{\mathbf j}
 \frac{\operatorname{Tr}(E_{\mathbf j}M)}
      {\operatorname{rank}(E_{\mathbf j})}
 E_{\mathbf j},
\end{align}
where the last equality holds since the $E_\bj$'s are Hermitian and idempotents.
Define $\overline Q_m\coloneqq\Pi_m Q_m$.  The primal expansion in~\cref{eqn_1719_2408} shows that
$\overline Q_m$ is obtained by averaging the entries of $Q_m$ on
each relation of the tensor scheme.  
Hence,
$\overline Q_m$ has diagonal $\mu N-1$ and entries at most $-1$ on
every nondiagonal nonedge relation.
The dual expansion shows that $\overline Q_m\succeq0$, because
\begin{align}
    \label{eqn_1807_2408}
 \operatorname{Tr}(E_{\mathbf j}Q_m)
 =
 \bigl\|Q_m^{1/2}E_{\mathbf j}\bigr\|_{\mathrm F}^{\,2}
 \geq0.
\end{align}
Thus $\overline Q_m$ is again a dual $\vartheta^-$ certificate for
$G^m$ at value $\mu N$.  Moreover, the quantity in~\cref{eqn_1807_2408} vanishes
if and only if
$E_{\mathbf j}\mathbb C^{X^m}\subseteq\ker Q_m$.  It follows that $\ker\overline Q_m
 \subseteq
 \ker Q_m
 =
 \incspace^{[m]}$.
We now prove the converse inclusion.
For $i\in[m]$ and $p\in P$, define the set $S_{i,p}^{[m]}
 \coloneqq
 X^{i-1}\times S_p\times X^{m-i}$.
This is an independent set of $G^m$ of size
$|S_p|v^{m-1}=\mu N$, and its indicator is
$\iota_i(\chi_p)=\chi_{i,p}$.
The span of these vectors is~$\incspace^{[m]}$ by the discussion in~\Cref{sect:representations-powers}.
Since $\overline Q_m$ is a dual $\vartheta^-$ certificate, we find
\[
 0
 \leq
 \mathbf1_{S_{i,p}^{[m]}}^\top
 \overline Q_m
 \mathbf1_{S_{i,p}^{[m]}}
 \leq
 |S_{i,p}^{[m]}|
 \bigl(\mu N-|S_{i,p}^{[m]}|\bigr)
 =
 0.
\]
Since $\overline Q_m\succeq0$, this implies
$\mathbf1_{S_{i,p}^{[m]}}\in\ker\overline Q_m$.
Since the indicators of the sets $S_{i,p}^{[m]}$ span
$\incspace^{[m]}$, we deduce that 
$\incspace^{[m]}\subseteq\ker\overline Q_m$ and, consequently, 
\begin{align}
\label{eqn_1808_1509}
\ker\overline Q_m=\incspace^{[m]}.
\end{align}

Taking $m=1$, we see that $\incspace$ is the kernel of an element of
the Bose--Mesner algebra and is therefore a direct sum of primitive
modules.  Hence, there is a set
$\Lambda\subseteq[s]$ such that
$U=\bigoplus_{j\in\Lambda}E_j\mathbb C^X$.
Thus $T$ is scheme-compatible, with index set $\Lambda$.
Define now
\[
 F_m\coloneqq\frac{1}{N}\bigl(\overline Q_m+J\bigr).
\]
Note that $F_m$ is real and symmetric, and it lies in the Bose--Mesner algebra $\C[\mathfrak X^{[m]}]$.
Take primal and dual decompositions of $F_m$ as in~\eqref{eqn_1758_1208}, and 
let $q_{\mathbf j}$ be the eigenvalue of $\overline Q_m$ on
$E_{\mathbf j}\mathbb C^{X^m}$.  Since $J=NE_{\mathbf0}$, we have
$\lambda_{\mathbf0}^{(m)}=1$ and $\lambda_{\mathbf j}^{(m)}
 =\frac{q_{\mathbf j}}{N}\geq0$
whenever  $\mathbf j\neq\mathbf0$, thus proving~\textup{(D1)}.
For $\mathbf j\neq\mathbf0$, the kernel identity~\cref{eqn_1808_1509} gives
$\lambda_{\mathbf j}^{(m)}=0$ if, and only if,
$\mathbf j\in\Lambda^{[m]}\setminus\{\mathbf0\}$.  Since
$\lambda_{\mathbf0}^{(m)}=1$, this proves~\textup{(D3)}.
In the  primal decomposition, the coefficient of
$A_{\mathbf0}$ in $F_m$ is
\[
 \frac{1}{N}\bigl((\mu N-1)+1\bigr)=\mu,
\]
while every coefficient corresponding to a nondiagonal nonedge  relation is
nonpositive.  Thus~\textup{(D2)} also holds, and $T$ is Delsarte-rigid.
\end{proof}

The description of Delsarte rigidity in
Definition~\ref{def_delsarte} is quantified over every tensor power $m$. 
In the examples of~\Cref{section_examples}, we shall use a slightly stronger
 condition, formulated entirely in the primal and dual bases of the
Bose--Mesner algebra of the first power (i.e., of the original scheme).

\begin{proposition}
\label{prop:bose-mesner-spectral-test}
Let $T$ be a scheme-compatible disjointness representation of $G=G_\Gamma$ with
index set  $\Lambda$.  Suppose that there exists some matrix %
$B\in\Span\{A_i\mid i\in\Gamma\}$
acting on $E_j\mathbb C^X$ as the scalar $\theta_j\in\R$, where
\begin{align}
\label{eqn_1611_2508}
 \theta_0=\kappa,
 \qquad \kappa>1,
 \qquad
 \theta_j=-1\quad(j\in\Lambda),
 \qquad
 -1<\theta_j<\kappa\quad(j\notin\Lambda\cup\{0\}).     
\end{align}
Then
$\mu=\frac1{\kappa+1}$,
and $T$ is a Delsarte-rigid representation.  Moreover, one may take
\[
 F_m=\frac\mu{\kappa^{m-1}}(B^{\otimes m}+\kappa^{m-1}A_{\mathbf0})%
\]
as the $m$-th certificate in~\Cref{def_delsarte}.
\end{proposition}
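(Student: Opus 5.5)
The plan is to verify the three conditions of \Cref{def_delsarte} directly for the proposed $F_m$, reading off its dual coefficients from the eigenvalues of $B^{\otimes m}$ and its primal coefficients from its support. First, I would establish $\mu=\frac1{\kappa+1}$ combinatorially from the feature fibres. Since $B$ acts on $E_j\C^X$ as the real scalar $\theta_j$ and the $E_j$ are real symmetric, $B=\sum_j\theta_jE_j$ is real symmetric. Because $B\in\Span\{A_i\mid i\in\Gamma\}$ with $\Gamma\subseteq[s]$, it has zero diagonal and is supported on edges of $G$. Fix $p\in P$ and write $\chi_p=\frac{|S_p|}{v}\1+u$ with $u\in U$ (scheme-compatibility). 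Then
\[
B\chi_p=\kappa\tfrac{|S_p|}{v}\1-u .
\]
Since $S_p$ is independent, $\chi_p^\top B\chi_p=0$. Expanding gives
\[
\kappa\tfrac{|S_p|^2}{v}-\Bigl(|S_p|-\tfrac{|S_p|^2}{v}\Bigr)=0,
\]
so $|S_p|=\frac{v}{\kappa+1}$ for every $p$. Summing over $p$ and using $\sum_p|S_p|=rv$ from Observation~\ref{obs_independence_number_disjointness_representation} yields $\frac{|P|}{\kappa+1}=r$, i.e.\ $\mu=\frac1{\kappa+1}$.

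Next, I would compute the dual expansion. By the same computation as in \cref{eqn_1807_1208}, $B^{\otimes m}$ acts on $E_{\bj}\C^{X^m}$ as $\prod_\ell\theta_{j_\ell}$. Hence
\[
\lambda^{(m)}_{\bj}=\frac{\mu}{\kappa^{m-1}}\Bigl(\prod_{\ell}\theta_{j_\ell}+\kappa^{m-1}\Bigr).
\]
For $\bj=\bzero$ this equals $\mu(\kappa+1)=1$. For the primal side, $B^{\otimes m}$ is a combination of $A_{\bi}$ with $\bi\in\Gamma^m$, and $\bzero\notin\Gamma^m$. So the coefficient of $A_{\bzero}$ is $\mu$, and all coefficients outside $\Gamma^m\cup\{\bzero\}$ vanish, which gives (D2). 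Real symmetry of $F_m$ follows from that of $B$.

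The main step, and the one needing care, is (D1)--(D3) for $\bj\neq\bzero$: I must show $\prod_\ell\theta_{j_\ell}\ge-\kappa^{m-1}$, with equality exactly when $\bj=j\be_i$ for some $j\in\Lambda$. The hypotheses give $\theta_j\in[-1,\kappa]$ for all $j$, with $\theta_j=\kappa$ only for $j=0$ and $\theta_j=-1$ only for $j\in\Lambda$. A negative product must contain a factor in $[-1,0)$, whose modulus is at most $1$, while every other factor has modulus at most $\kappa$; hence the bound holds. For the equality case, every inequality must be tight: exactly one factor is negative with modulus $1$, i.e.\ equals $-1$, so its index lies in $\Lambda$, and all other factors equal $\kappa$, so their indices are $0$. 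If two or more factors are negative, the modulus is at most $\kappa^{m-2}<\kappa^{m-1}$ because $\kappa>1$. Conversely, $\bj=j\be_i$ with $j\in\Lambda$ gives product $-\kappa^{m-1}$. Combining with \eqref{eqn_1653_2408}, this yields $\lambda^{(m)}_{\bj}\ge0$ and the zero pattern in (D3), completing Delsarte-rigidity.
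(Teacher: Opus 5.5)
Your proof is correct and follows the paper's argument essentially step by step. You use the same eigenvalue formula for $\lambda^{(m)}_{\bj}$, the same support argument for (D2), and the same case analysis on the negative factors of $\prod_\ell\theta_{j_\ell}$ for (D1) and (D3); the only cosmetic difference is that you obtain $|S_p|=v/(\kappa+1)$ from $\chi_p^\top B\chi_p=0$, whereas the paper evaluates $B\chi_p=(\kappa+1)\mu_p\1-\chi_p$ at a point of $S_p$, and your division by $|S_p|$ is justified because $S_p\neq\emptyset$ (from $P=\bigcup_x T_x$).
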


\begin{proof}
Fix $p\in P$ (where $P$ is the feature set of $T$) and define $\mu_p\coloneqq\frac{|S_p|}{v}$.  Since
$\chi_p-\mu_p\mathbf1\in U=\bigoplus_{j\in\Lambda}E_j\mathbb C^X$, we find $B(\chi_p-\mu_p\1)=-\chi_p+\mu_p\1$ and, thus,
\[
 B\chi_p=
 \mu_p B\1-\chi_p+\mu_p\1
 =
 (\kappa+1)\mu_p\mathbf1-\chi_p.
\]
Recall that $S_p\neq\emptyset$, and take
$x\in S_p$; by independence of $S_p$, 
no member of $S_p$ is adjacent to $x$.  The support
condition on $B$ therefore gives $(B\chi_p)(x)=0$, and hence
$\mu_p=1/(\kappa+1)$.  Averaging over the feature fibres using
\eqref{eqn_1648_0308} then shows that $\mu=\frac{r}{|P|}=\frac{1}{\kappa+1}$, as required.

The matrix $B^{\otimes m}$ is supported on the edge relation of
$G^{m}$.  Thus $F_m$ has diagonal relation coefficient
$\mu$ and zero coefficients on every nondiagonal nonedge
relation, proving~\textup{(D2)}.  Its eigenvalue on the primitive module
indexed by $\bj=(j_1,\ldots,j_m)$ is
\[
 \lambda_{\bj}^{(m)}
   =\frac1{(\kappa+1)\kappa^{m-1}}\left(\prod_{\ell=1}^m\theta_{j_\ell}+\kappa^{m-1}\right).
\]
At $\bj=\mathbf0$, this is $1$.  If exactly one
coordinate of $\bj$ lies in $\Lambda$ and all others are zero (i.e., if $\bj\in\Lambda^{[m]}\setminus\{\bzero\}$), we get $\lambda_{\bj}^{(m)}=0$.
Furthermore, suppose $\bj$ is not in such form and the product $\prod_{\ell=1}^m\theta_{j_\ell}$ is negative.
Then the absolute
value of the product is strictly smaller than $\kappa^{m-1}$, as the negative factors have
absolute value at most $1$, with equality only on $j\in\Lambda$,
whereas positive factors have value at most $\kappa$, with equality only
on $j=0$. Also, if the product is nonnegative, then $\lambda_{\bj}^{(m)}>0$.  Altogether, this proves~\textup{(D1)} and~\textup{(D3)}.
\end{proof}

A particularly simple case occurs when the nonconstant part $U$ of the incidence space of a scheme-compatible representation is precisely the eigenspace of the adjacency matrix $A(G_\Gamma)$ corresponding
to its least eigenvalue.
\begin{definition}
\label{def:hoffman-feature}
Let
$\tau=\min_{0\leq j\leq s}\eta_j(\Gamma)$ and $\Lambda_\tau=\{j\in[s] \mid \eta_j(\Gamma)=\tau\}$.
A scheme-compatible disjointness representation of $G_\Gamma$ is \emph{Hoffman-rigid} if its index set  is $\Lambda_\tau$; equivalently, if
\[
 \incspace\cap\1^\perp
 =\bigoplus_{j\in\Lambda_\tau}E_j\mathbb C^X.
\]
\end{definition}

\begin{proposition}
\label{prop:hoffman-implies-delsarte}
Suppose that $G=G_\Gamma$ is connected and non-bipartite.  Every Hoffman-rigid
representation of $G$ is Delsarte-rigid.
\end{proposition}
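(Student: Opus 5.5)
Let $G=G_\Gamma$ have degree $k$ and least eigenvalue $\tau$. The plan is to apply \Cref{prop:bose-mesner-spectral-test} to the rescaled adjacency matrix
\[
 B\coloneqq -\tfrac{1}{\tau}A_\Gamma\in\Span\{A_i\mid i\in\Gamma\}.
\]
This matrix acts on $E_j\C^X$ as $\theta_j=-\eta_j(\Gamma)/\tau$. We therefore need to check the conditions in \eqref{eqn_1611_2508} with $\Lambda=\Lambda_\tau$ and $\kappa=-k/\tau$.

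First, $\tau<0$. Indeed, $\Tr A_\Gamma=0$ and $G$ has at least one edge, since it is connected and non-bipartite (hence has more than one vertex). So $B$ is well defined, and $\theta_j=-1$ exactly for $j\in\Lambda_\tau$. Next, $\theta_0=-k/\tau=\kappa$. The eigenvalue interlacing/trace bound $|\tau|\le k$ together with non-bipartiteness gives $|\tau|<k$, since for a connected regular graph $-k$ is an eigenvalue iff the graph is bipartite. Hence $\kappa>1$.

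Finally, for $j\notin\Lambda_\tau\cup\{0\}$ we have $\tau<\eta_j(\Gamma)<k$. The strict upper inequality holds because connectedness makes $k$ a simple eigenvalue of $A_\Gamma$, whose eigenspace is $E_0\C^X=\C\1$, so $\eta_j\neq k$ for $j\neq0$. The strict lower inequality holds by the definition of $\Lambda_\tau$. Dividing by the negative number $\tau$ gives $\kappa>\theta_j>-1$, which is exactly the required condition.

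Since the representation is scheme-compatible with index set $\Lambda_\tau$ by \Cref{def:hoffman-feature}, \Cref{prop:bose-mesner-spectral-test} applies directly and yields Delsarte-rigidity. As a byproduct it gives $\mu=\frac{1}{\kappa+1}=\frac{-\tau}{k-\tau}$, the Hoffman ratio.

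The only delicate points are the two spectral facts used for $\kappa>1$ and for the strictness of $\theta_j<\kappa$: that $k$ is simple with eigenspace $\C\1$, and that $-k$ is not an eigenvalue. Both are standard Perron--Frobenius consequences of connectedness and non-bipartiteness for regular graphs. One should also check that each $E_j$ with $j\neq0$ is orthogonal to $\1$, so that $\eta_j\neq k$ follows from simplicity; this holds because $E_0=|X|^{-1}J$ and the $E_j$ are orthogonal idempotents. No further obstacle is expected.
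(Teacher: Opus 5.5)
Your proposal is correct and follows the paper's proof: you take $B=-A_\Gamma/\tau$ and $\kappa=-k/\tau$, use connectedness to make the degree $k$ a simple eigenvalue on $E_0\C^X=\C\1$, and use non-bipartiteness to exclude $-k$. You then apply \Cref{prop:bose-mesner-spectral-test} with index set $\Lambda=\Lambda_\tau$. Your trace argument for $\tau<0$ and the explicit sign bookkeeping when dividing by $\tau$ are details the paper states without justification.
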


\begin{proof}
Define $d=\eta_0(\Gamma)$, and note that
$G_\Gamma$ is $d$-regular.  Since $G$ is connected, $d$ is a simple
adjacency eigenvalue.  Since $G$ is non-bipartite, $-d$ is not an
adjacency eigenvalue.  Moreover, the least eigenvalue $\tau$ is
negative.  Set $B\coloneqq -\frac{A(G)}{\tau}$ and $\kappa\coloneqq-\frac d{\tau}>1$.
Note that $B$ is an element of $\Span\{A_i\mid i\in\Gamma\}$ and, on $E_j\mathbb C^X$, it has eigenvalue
$-\eta_j(\Gamma)/\tau$.  This equals $-1$ precisely for
$j\in\Lambda_\tau$, while every remaining nonconstant eigenvalue lies
strictly between $-1$ and $\kappa$.  Hence,
\Cref{prop:bose-mesner-spectral-test} applies.
\end{proof}

\begin{remark}
\label{rem:hoffman-tightness}
Just like for Schrijver and Delsarte rigidity, a Hoffman-rigid representation forces Hoffman's ratio bound (see~\cite{haemers2021hoffman}) on the independence number to be tight.
More precisely, every feature fibre of a Hoffman-rigid representation
is a maximum independent set of size $-\frac{\tau v}{d-\tau}$, where $d$ is the common degree of the vertices of the (regular) graph $G$.

To see this, let $A$ be the adjacency matrix of
$G$, and let $\tau<0$ be its least eigenvalue. For each $p\in P$, write
$\mu_p=\frac{|S_p|}{v}$.
Since $\chi_p\in\mathcal L$,
$\chi_p-\mu_p\mathbf{1}$ belongs to
$\mathcal L\cap\mathbf{1}^{\perp}$, which is the $\tau$-eigenspace
of $A$ by Hoffman rigidity. Hence, using $A\mathbf{1}=d\mathbf{1}$,
\[
  A\chi_p
  =d\mu_p\mathbf{1}
   +\tau\bigl(\chi_p-\mu_p\mathbf{1}\bigr)
  =(d-\tau)\mu_p\mathbf{1}+\tau\chi_p.
\]
Evaluating this identity
at any $x\in S_p$ and using that the set $S_p$ is independent gives
\[
  0=(A\chi_p)_x
   =(d-\tau)\mu_p+\tau
\]
and, thus, $\mu_p=\frac{-\tau}{d-\tau}$. Hence, $S_p$ meets Hoffman ratio bound with equality.
\end{remark}

\section{The classic metric association schemes}
\label{section_examples}
In this section, we consider the Johnson, Grassmann, and Hamming schemes, and apply the results of the previous sections to obtain commutativity gadgets for certain graphs derived from these association schemes, see~\Cref{tab:AS-graphs}. In particular, we prove~\Cref{thm_RE_completeness_classes}.

The Johnson scheme for parameters $n$ and $k$ is given by vertex set $X=\binom{n}{k}$ and relations $R_s=\{(x,y)\mid |x\cap y|=k-s\}$.
The Grassmann scheme for parameters $q,n,k$ (where $q$ is a prime power) is obtained by taking vertex set $X=\{U\leq\mathbb F_q^n \mid \dim U=k\}$ and letting $R_s=\{(U,V)\mid \dim(U\cap V)=k-s\}$.
The Hamming scheme for parameters $d$ and $q$ is given by $X=[q]^d$ and $R_s=\{(x,y)\mid h(x,y)=s\}$, where $h(x,y)=|\{i\in[d]\mid x_i\neq y_i\}|$ is the Hamming distance.

For each of these association schemes, we consider the graphs given by their maximum-distance relations and the complement of the graph given by their distance-one relations, and we show that commutativity gadgets exist within natural parameter ranges.
\begin{table}[!h]
    \centering
    \small
\setlength{\tabcolsep}{3pt}
\begin{tabular}{|p{0.14\textwidth}|p{0.23\textwidth}|p{0.23\textwidth}|p{0.27\textwidth}|}
\hline
Scheme & Relation $R_s$ & Maximum-distance graph & Complement of the distance-one graph \\
\hline
Johnson & $|\vA\cap \vB|=k-s$ & $R_k=\KG(n,k)$ & $R_2\cup\cdots\cup R_k=\overline{J(n,k)}$ \\
\hline
Grassmann & $\dim(U\cap W)=k-s$ & $R_k=\operatorname{KG}_q(n,k)$ & $R_2\cup\cdots\cup R_k=\overline{J_q(n,k)}$ \\
\hline
Hamming & $h(x,y)=s$ & $R_d=X_d(q)$ & $R_2\cup\cdots\cup R_d=\overline{H(d,q)}$ \\
\hline
\end{tabular}
    \caption{The Johnson, Grassmann, and Hamming schemes, and their derived graphs.}
    \label{tab:AS-graphs}
\end{table}

When the scheme has two classes (i.e. $k=2$ or $d=2$), the two selected graphs coincide, since the complement of $R_1$ is $R_2$; for $k\geq3$ (or $d\geq 3$) they are distinct.

\subsection{The Johnson scheme}
We consider the two families of derived graphs arising from the Johnson scheme. The common tool is the expression of incidence spaces through inclusion modules $\mathcal{W}_j$ , whose decomposition into the dual basis is given in~\Cref{lem:johnson-modules}.
For the Kneser graph, the identity feature map has incidence space $\mathcal W_1$, leading directly to Hoffman-rigidity. For the complement of the Johnson graph, the natural features are the $(k-1)$-subsets, and the incidence space is $\mathcal W_{k-1}$; in this case we construct a matrix in the Bose--Mesner algebra to satisfy the finite Delsarte-rigidity criterion.

\label{sec:johnson-scheme}
For $0\le j\le k$, take $n>2k\geq2$, and let $X=\binom{[n]}{k}$. We define
$\mathcal W_j\coloneqq \operatorname{span}\left\{\chi_\vR \mid \vR\in\binom{[n]}{j}\right\}$, where
$\chi_\vR(\vA)=\indfunction{\vR\subseteq \vA}$.
The next result follows from the known properties of the Johnson scheme~\cite[\S~6]{godsil2016erdos}.

\begin{lemma}\label{lem:johnson-modules}
For $0\le j\le k$,
\[
\mathcal W_j=\bigoplus_{i=0}^{j}E_i\mathbb C^X,
\qquad
\dim \mathcal W_j=\binom nj.
\]
Consequently, the functions $\chi_\vR$, $\vR\in\binom{[n]}{j}$, form a basis
of $\mathcal W_j$.
\end{lemma}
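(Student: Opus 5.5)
We argue by induction on $j$, using the inclusion maps between layers of the Boolean lattice. For $0\le j\le k$ let $W_{j}\colon \C^{\binom{[n]}{j}}\to\C^X$ be the linear map that sends the indicator of $\vR\in\binom{[n]}{j}$ to $\chi_\vR$; then $\mathcal W_j=\operatorname{im}W_j$. Two facts follow directly from the definitions. First, the spaces are nested, $\mathcal W_0\subseteq\mathcal W_1\subseteq\cdots\subseteq\mathcal W_k$, because
\[
\sum_{\vR'\supseteq\vR,\ |\vR'|=j+1}\chi_{\vR'}=(k-j)\chi_\vR
\]
and $k-j>0$ for $j<k$. Second, each $\mathcal W_j$ is invariant under the Bose--Mesner algebra. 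To see this, note that $W_jW_j^\top$ has $(\vA,\vB)$-entry $\binom{|\vA\cap \vB|}{j}$, so it lies in $\C[\mathfrak X]$. Each $A_i$ commutes with the action of $\mathrm{Sym}(n)$, and the symmetrised operator $A_iW_j$ can be expressed through $W_j$ composed with the analogous $\mathrm{Sym}(n)$-equivariant operators on $j$-sets. More concretely, $(A_i\chi_\vR)(\vA)$ depends only on $|\vR\cap\vA|$, so $A_i\chi_\vR$ is a combination of the functions $\vA\mapsto\binom{|\vR\cap\vA|}{t}$ for $t\le j$. Each such function equals $\sum_{\vR''\subseteq\vR,|\vR''|=t}\chi_{\vR''}\in\mathcal W_t\subseteq\mathcal W_j$. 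Since $\mathcal W_j$ is a submodule of $\C^X$ for the commutative semisimple algebra $\C[\mathfrak X]$, it is a direct sum of some of the primitive modules $E_i\C^X$.

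Next we compute the dimension: $\dim\mathcal W_j=\binom nj$, i.e.\ $W_j$ is injective. This is the classical full-rank statement for the $j$-versus-$k$ inclusion matrix when $j\le k\le n-j$, due to Gottlieb and Kantor; here it holds because $n>2k\ge 2j$. We may quote it, or argue that $W_j^\top W_j$, the $j$-set operator with entries $\binom{n-|\vR\cup\vR'|}{k-|\vR\cup\vR'|}$, lies in the Johnson algebra on $\binom{[n]}{j}$ and has eigenvalues $\binom{n-j-i}{k-j}\binom{k-i}{j-i}$-type expressions, all nonzero when $n-j\ge k$. The basis claim then follows immediately, since $\binom nj$ spanning vectors in a space of dimension $\binom nj$ are independent.

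It remains to identify which modules occur. By induction, $\mathcal W_{j-1}=\bigoplus_{i<j}E_i\C^X$, and $\mathcal W_j\supsetneq\mathcal W_{j-1}$ adds modules of total dimension $\binom nj-\binom n{j-1}$. In the standard indexing of the Johnson scheme (by the ordering in \cite[\S~6]{godsil2016erdos}), $E_j\C^X$ is precisely the module with $\operatorname{rank}E_j=\binom nj-\binom n{j-1}$ lying in $\mathcal W_j\ominus\mathcal W_{j-1}$; indeed this is exactly how the $E_j$ are defined there. The main obstacle is therefore bookkeeping: we must make sure our indexing of the idempotents matches the one in the cited source, for instance by checking that the new piece is a single irreducible $\mathrm{Sym}(n)$-module, namely the Specht module $S^{(n-j,j)}$. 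Once that is settled, the decomposition follows by induction starting from $\mathcal W_0=\C\1=E_0\C^X$.
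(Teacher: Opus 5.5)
Your plan works, but one inference is invalid as written. From the fact that $\mathcal W_j$ is invariant under $\C[\mathfrak X]$, you conclude that it is a direct sum of some of the primitive modules $E_i\C^X$. The Bose--Mesner algebra acts on each $E_i\C^X$ by scalars, so \emph{every} subspace of $E_i\C^X$ is invariant. Invariance therefore only yields $\mathcal W_j=\bigoplus_i(\mathcal W_j\cap E_i\C^X)$, and a priori the pieces $\mathcal W_j\cap E_i\C^X$ could be proper nonzero subspaces. This is a real possibility, not a technicality: there are only $k+1$ primitive modules in a space of dimension $\binom nk$, so most of them are not one-dimensional. Your later step, that $\mathcal W_j$ ``adds modules'' to $\mathcal W_{j-1}$, rests on this inference.

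The repair is already in your text. Since $W_j$ is real, $\mathcal W_j=\operatorname{im}W_j=\operatorname{im}(W_jW_j^\top)$. The matrix $W_jW_j^\top$, whose $(\vA,\vB)$-entry is $\binom{|\vA\cap\vB|}{j}$, lies in $\C[\mathfrak X]$. Writing $W_jW_j^\top=\sum_i\lambda_iE_i$ gives $\mathcal W_j=\bigoplus_{\lambda_i\neq0}E_i\C^X$. Alternatively, you can combine $\mathrm{Sym}(n)$-invariance with irreducibility of each $E_i\C^X$ under $\mathrm{Sym}(n)$. Either way, your computation of $A_i\chi_\vR$ becomes unnecessary.

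With this fixed, the identification step needs no Specht modules. Start from $\mathcal W_0=E_0\C^X$. The chain $\mathcal W_0\subsetneq\mathcal W_1\subsetneq\cdots\subsetneq\mathcal W_k=\C^X$ is strict, because $\dim\mathcal W_j=\binom nj$ (your Gottlieb--Kantor step) increases for $j\le k<n/2$. Each term is a sum of whole primitive modules, and there are only $k+1$ of them. So each inclusion adds exactly one module, of rank $\binom nj-\binom n{j-1}$, and calling it $E_j$ is the indexing of \cite{godsil2016erdos} that the paper uses.

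By contrast, the paper's proof is a pure citation. It quotes \cite[Theorem~6.3.3 and Corollary~6.3.4]{godsil2016erdos}, which give $E_i\C^X=\mathcal W_i\cap\mathcal W_{i-1}^\perp$ together with its rank, and that $E_0+\cdots+E_j$ is the projection onto $\mathcal W_j$. It then obtains $\dim\mathcal W_j$ by telescoping these ranks, rather than from the rank of the inclusion matrix. Your repaired route is more self-contained: it imports only the Gottlieb--Kantor rank theorem and the labelling convention.
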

\begin{proof}
By~\cite[Theorem~6.3.3]{godsil2016erdos}\footnote{In the notation of Godsil and Meagher \cite{godsil2016erdos}, $\mathcal W_j=\operatorname{col}(W_{j,k}^{\top})$.}, the $i$-th primitive module of the Johnson scheme is $E_i\mathbb C^X=\mathcal W_i\cap \mathcal W_{i-1}^{\perp}$, where $\mathcal{W}_{-1}=0$ and $\dim (E_i\mathbb C^X)=\binom ni-\binom n{i-1}$. Also, by ~\cite[Corollary~6.3.4]{godsil2016erdos}, $E_0+\cdots+E_j$ is the orthogonal projection onto $\mathcal W_j$.
Thus, $\mathcal W_j=\bigoplus_{i=0}^{j}E_i\mathbb C^X$ and $\dim \mathcal W_j
=\sum_{i=0}^{j}\left(\binom ni-\binom n{i-1}\right)
=\binom nj.$
\end{proof}

\subsubsection{Kneser graphs}
\label{subsec:kneser-verification}

\begin{figure}[t]
  \centering
  \includegraphics[width=.75\linewidth]{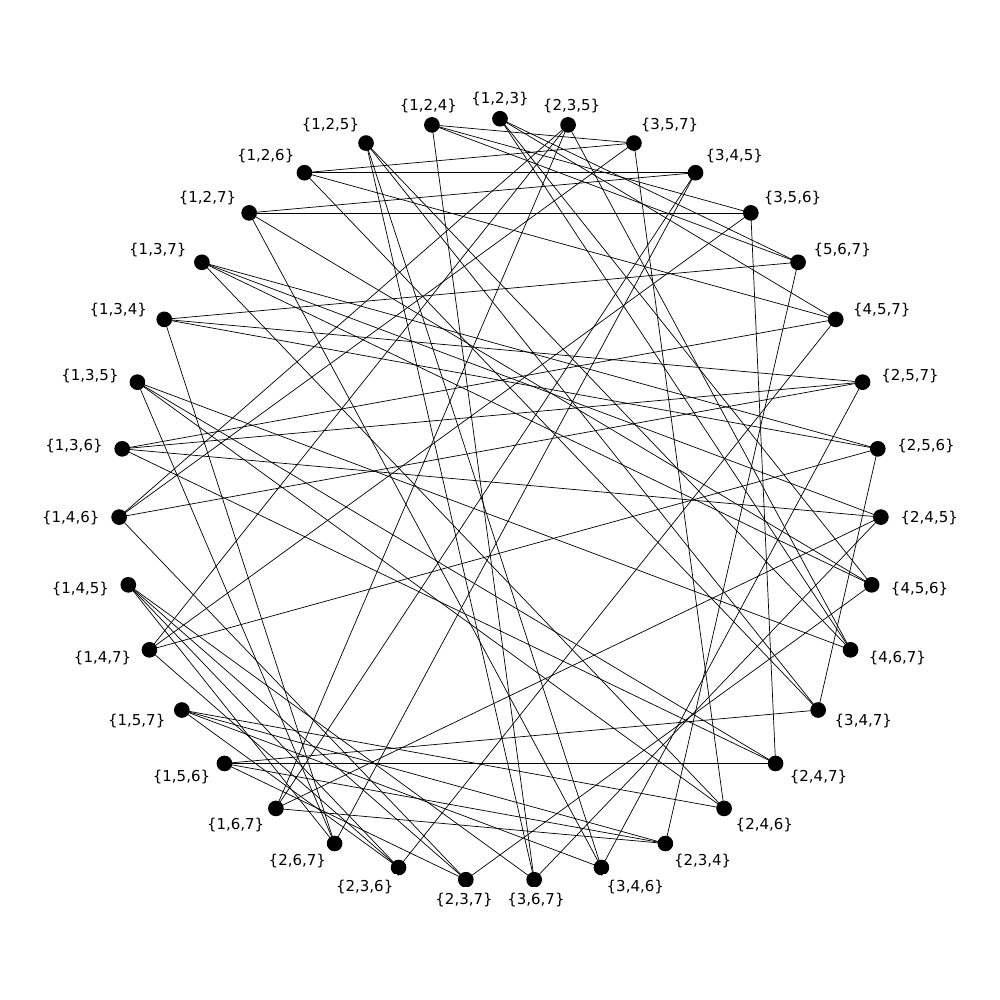}
  \caption{The Kneser graph $\operatorname{KG}(7,3)$.  Its vertex labels are the $3$-subsets of $[7]=\{1,\ldots,7\}$, and two vertices are adjacent precisely when the corresponding subsets are disjoint.  The graph has $35$ vertices and $70$ edges.}
  \label{fig:kneser-kg7-3}
\end{figure}
Recall that the Kneser graph $\KG(n,k)$ is the graph with vertices $X$ and where $\vA\sim \vB$ if, and only if, $\vA\cap \vB=\emptyset$. The edges of $KG(n,k)$ form the maximum-distance relation of the
Johnson scheme, and thus the adjacency matrix is $A_k$.

By the $i=k$ case of \cite[Theorem~6.5.2]{godsil2016erdos}, the primitive modules are eigenspaces of $A_k$ (the adjacency matrix of $KG(n,k)$), and the eigenvalue
corresponding to $E_j\mathbb C^X$ is
\[
\lambda_j=(-1)^j\binom{n-k-j}{k-j},
\qquad 0\le j\le k.
\]
Since the lambdas are distinct, the eigenspace of $KG(n,k)$ with eigenvalue $\lambda_j$
is precisely $E_j\mathbb C^X$.

In particular, the least eigenvalue is $\tau=\lambda_1=-\binom{n-k-1}{k-1}$, with eigenspace $E_1\mathbb C^X$. 
Moreover, $\lambda_0=\binom{n-k}{k}$ is the degree and has multiplicity
one, while $-\lambda_0$ is not an eigenvalue. Thus $KG(n,k)$ is
connected, regular, and non-bipartite.

Our goal is to show that the Kneser graph $KG(n,k)$ has a tame Hoffman-rigid disjointness representation. The next elementary matrix lemma will determine said edge kernel.

\begin{lemma}
\label{lem:kneser-cross-sums}
Let $n>2k$ and consider a matrix $M=(m_{ij})\in\mathbb C^{n\times n}$.  If $\sum_{i\in \vA}\sum_{j\in \vB}m_{ij}=0$ for every two disjoint $\vA,\vB\in\binom{[n]}k$, then $M$ is diagonal.
\end{lemma}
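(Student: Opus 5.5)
Fix distinct $i\neq j$; we show $m_{ij}=0$ using only local exchanges. Write $s(\vA,\vB)=\sum_{a\in\vA}\sum_{b\in\vB}m_{ab}$. The first step extracts a row-difference identity. Take disjoint $k$-sets $\vA,\vB$ and an element $c\notin\vA\cup\vB$, which exists since $n>2k$. Swapping one element $b\in\vB$ for $c$ keeps the pair disjoint. Subtracting the two vanishing sums gives
\[
\sum_{a\in\vA}(m_{ab}-m_{ac})=0
\]
whenever $b,c\notin\vA$ and $\vA$ is a $k$-set; the set $\vB$ drops out. Symmetrically, $\sum_{b\in\vB}(m_{ab}-m_{cb})=0$ whenever $a,c\notin\vB$.

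The second step isolates single entries by swapping inside $\vA$ as well. Fix distinct $b,c$ and a $(k-1)$-set $\vR$ avoiding $b,c$, together with two further elements $a,a'$ outside $\vR\cup\{b,c\}$. This needs $k+3\le n$, which holds since $n\geq 2k+1$ and $k\geq1$; when $k=1$ the condition $n\ge3$ suffices. Applying the first identity to $\vR\cup\{a\}$ and to $\vR\cup\{a'\}$ and subtracting yields
\[
m_{ab}-m_{ac}=m_{a'b}-m_{a'c}.
\]
Hence $d(b,c):=m_{ab}-m_{ac}$ is independent of $a$ for $a\notin\{b,c\}$. Plugging back into the first identity gives $k\,d(b,c)=0$, so $m_{ab}=m_{ac}$ for all distinct $a,b,c$. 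In words, within each row $a$ the off-diagonal entries are all equal to a common value $\rho_a$. The column version gives the same conclusion for columns, namely that the off-diagonal entries in column $b$ equal some $\gamma_b$. Since $n\ge3$, comparing rows and columns shows all off-diagonal entries equal a single constant $\rho$.

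The final step uses the original condition: $s(\vA,\vB)=k^2\rho=0$ for disjoint $\vA,\vB$, because no diagonal entries occur in $s$ when $\vA\cap\vB=\emptyset$. Therefore $\rho=0$ and $M$ is diagonal.

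The main obstacle is the counting check that enough room exists in $[n]$ for the auxiliary elements $a,a',c,\vR$ in the second step. The case $k=1$ is degenerate, since there $\vR=\emptyset$ and the condition reads $m_{ab}=0$ for $a\ne b$ directly. The required room is guaranteed by $n>2k$, but I would verify each choice explicitly for small $k$.
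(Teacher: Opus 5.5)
Your proof is correct. The only slip is the claim that $k+3\le n$ follows from $n\ge 2k+1$ and $k\ge 1$. That implication needs $k\ge 2$: for $k=1$ and $n=3$ there is room for only one of $a,a'$. Nothing is lost, because you treat $k=1$ separately, and there the hypothesis literally says $m_{ab}=0$ for $a\ne b$. In Step~1 you should also say why the identity holds for \emph{every} $k$-set $\vA$ and $b,c\notin\vA$: a $k$-set $\vB\ni b$ avoiding $\vA\cup\{c\}$ exists because $n-k-1\ge k$.

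Your route differs from the paper's in organisation rather than in its basic tool. Both arguments use one-element exchanges followed by a ``$k$ times a common value is zero'' count.

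\begin{itemize}
\item The paper isolates this as a reusable claim: if $|Y|>k$ and all $k$-subset sums of $(u_i)_{i\in Y}$ vanish, then $u\equiv0$. It applies the claim twice to \emph{absolute} quantities. The first application is to $u_j=\sum_{i\in\vA}m_{ij}$ on $[n]\setminus\vA$, giving $\sum_{i\in\vA}m_{ij}=0$ for all $j\notin\vA$. The second is to $u_i=m_{ij}$ on $[n]\setminus\{j\}$.
\item You keep only differences after Step~1. That is what forces your second layer of exchanges inside $\vA$, and with it the extra room $n\ge k+3$ and the $k=1$ case split. It also forces the row/column comparison and the final use of the hypothesis.
\end{itemize}

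The paper's version is shorter and works uniformly under $n>2k$. You could have reached the paper's intermediate statement directly. Step~1 already shows that $\sum_{a\in\vA}m_{ab}$ takes a common value $c_{\vA}$ for all $b\notin\vA$. Then $s(\vA,\vB)=k\,c_{\vA}=0$ gives $c_{\vA}=0$ at once.
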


\begin{proof}
We first prove the following claim: for any $Y\subseteq [n]$ with $|Y|>k$ and any family of numbers $(u_i)_{i\in Y}$ such that $\sum_{i\in b}u_i=0$ for every $b\in \binom Yk$, we have $u_i=0$ for every $i\in Y$. 
Indeed, for any distinct $i,j\in Y$, choose $k$-sets $b_i,b_j\subseteq Y$ such that $b_i\setminus b_j=\{i\}$, and  $b_j\setminus b_i=\{j\}$.
Subtracting the two vanishing sums gives that $u_i=u_j$ and hence, every $u_i$ has a common value $c$. Considering a sum over any $k$-set yields that $kc=0$, so $c=0$ proving the claim. 
Now fix any $\vA\in X$, we let $j\notin \vA$ and apply the claim to $[n]\setminus \vA$ with $u_j=\sum_{i\in \vA}m_{i,j}$. The hypothesis of the claim is the assumption of the lemma, so this yields $ \sum_{i\in \vA}m_{i,j}=0$ for every $j\notin \vA$ since $|[n]\setminus \vA|=n-k>k$. 
Finally, fix any $j\in [n]$ and apply the claim to $[n]\setminus{j}$ with $u_i=m_{i,j}$, the hypothesis of the claim now holds by the previous paragraph. We conclude that $m_{i,j}=0$ for every $i\neq j$, completing the proof.
\end{proof}

\begin{proposition}
\label{prop:kneser-criterion}
For every $n>2k\geq2$, the identity map $\vA\mapsto T_\vA=\vA$ is a Hoffman-rigid tame representation of $\KG(n,k)$. 
Consequently, the graph admits a
commutativity gadget and the corresponding $\qCSP$ is $\RE$-complete.
\end{proposition}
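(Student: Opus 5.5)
We verify four things in turn: that the identity map is a disjointness representation, that it is Hoffman-rigid, that it is tame, and that the graph is connected and non-bipartite. The conclusion then follows by chaining earlier results. Prop.~\ref{prop:hoffman-implies-delsarte} turns Hoffman-rigidity into Delsarte-rigidity. Lemma~\ref{lem:delsarte-iff-schrijver} turns that into Schrijver-rigidity. Finally, Theorem~\ref{thm:criterion} yields a commutativity gadget and $\RE$-completeness.

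First, the disjointness representation. Take $P=[n]$ and $r=k$. The map $\vA\mapsto\vA$ is injective, adjacent vertices are disjoint sets by definition of $\KG(n,k)$, and every element of $[n]$ lies in some $k$-set. The feature functions are $\chi_{\{i\}}$, so $\incspace=\mathcal W_1=E_0\C^X\oplus E_1\C^X$ by Lemma~\ref{lem:johnson-modules}. Hence $U=E_1\C^X$, the representation is scheme-compatible with index set $\{1\}$, and this equals $\Lambda_\tau$ because $\tau=\lambda_1$ is attained only at $j=1$ (the $\lambda_j$ are distinct). This gives Hoffman-rigidity. Connectivity and non-bipartiteness were already noted from the spectrum: $\lambda_0$ is simple and $-\lambda_0$ is not an eigenvalue.

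For tameness, I would apply Lemma~\ref{lem:diagonal-feature-rigidity}. Its hypotheses are the following.
\begin{itemize}
\item The feature functions are linearly independent. This holds because they form a basis of $\mathcal W_1$ by Lemma~\ref{lem:johnson-modules}.
\item Every vertex has degree at least two. The degree is $\binom{n-k}{k}\geq\binom{k+1}{k}\geq2$.
\item $\edgekernel=\Span\{\chi_i\otimes\chi_i\}$.
\end{itemize}

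The third hypothesis is where Lemma~\ref{lem:kneser-cross-sums} enters. Write $F\in\incspace\otimes\incspace$ uniquely as $F=\sum_{i,j}m_{ij}\chi_i\otimes\chi_j$; this uses the basis property again. Then $F(\vA,\vB)=\sum_{i\in\vA}\sum_{j\in\vB}m_{ij}$. So $F\in\edgekernel$ means exactly that these cross sums vanish for all disjoint $k$-sets $\vA,\vB$. By Lemma~\ref{lem:kneser-cross-sums}, $M=(m_{ij})$ is diagonal, so $F$ lies in the span of the $\chi_i\otimes\chi_i$. The converse inclusion is immediate, since each fibre $S_i$ is independent. (Alternatively one could try Lemma~\ref{lem:local-incidence-edge-kernel}, but the direct route is cleaner.) Lemma~\ref{lem:diagonal-feature-rigidity} then gives tameness.

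The main obstacle is this edge-kernel identification, and it is exactly what the matrix lemma is designed for. The care needed is in setting up the coordinate description of $\incspace\otimes\incspace$ using the basis $\{\chi_i\otimes\chi_j\}$. For the case $k=1$, $\KG(n,1)=K_n$ with $n\geq3$, and the same argument goes through since the degree is $n-1\geq2$.
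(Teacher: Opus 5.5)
Your proposal is correct and follows essentially the same route as the paper's proof. You obtain Hoffman-rigidity from $\incspace=\mathcal W_1=E_0\C^X\oplus E_1\C^X$, identify the edge kernel as $\Span\{\chi_i\otimes\chi_i\}$ via the cross-sum lemma applied to the coefficient matrix in the basis $\{\chi_i\otimes\chi_j\}$, and get tameness from \Cref{lem:diagonal-feature-rigidity}. You then conclude through the chain Hoffman $\Rightarrow$ Delsarte $\Rightarrow$ Schrijver $\Rightarrow$ \Cref{thm:criterion}, exactly as the paper does.
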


\begin{proof}
For the proposed representation we have that $P=[n]$, $r=k$ and one can easily see for any $i\in [n]$ that $S_i =\{\vA\in X \mid i\in \vA \}, \chi_i(\vA)=\indfunction{i\in \vA}$, and  $\sum_{i}\chi_i=k\1$.
Every $S_i$ is a nonempty independent set, since two $k$-sets containing $i$
cannot be disjoint and for this representation we have $\incspace=\mathcal W_1$. Hence, by Lemma
\ref{lem:johnson-modules}, $\incspace=E_0\mathbb C^X\oplus E_1\mathbb C^X$ with $\incspace\cap\mathbf1^\perp=E_1\mathbb C^X$, and the functions $\chi_i$, $i\in[n]$, form a basis of $ \incspace$.
Therefore, this is a Hoffman-rigid representation with index set $\{1\}$.
Consider any  $F\in\incspace\otimes\incspace$, and note that $\{\chi_i\mid i\in [n]\}$ forms a basis for $\incspace$ by~\Cref{lem:johnson-modules}. We write
\begin{align*}
    F &= \sum_\ell f_\ell \otimes g_\ell
      = \sum_\ell\sum_{i,j\in[n]} c_{f,\ell,i}c_{g,\ell,j}(\chi_i\otimes\chi_j)
      = \sum_{i,j\in[n]} m_{i,j} (\chi_i\otimes\chi_j)
\end{align*}
so, for any $\vA,\vB\in X$, $F(\vA,\vB)$ has the (unique) expression
\begin{align*}
    F(\vA,\vB) &= \sum_{i,j\in[n]} m_{i,j} (\chi_i\otimes\chi_j)(\vA,\vB)
           = \sum_{\substack{i\in \vA\\ j\in \vB}} m_{i,j}
           = \sum_{i\in \vA}\sum_{j\in \vB} m_{i,j}.
\end{align*}
The map $F$ belongs to the edge kernel $\edgekernel$ precisely when the requirements for~\Cref{lem:kneser-cross-sums} hold. This is equivalent to $M_F$ being
diagonal and, in particular, symmetric. Also, since $F=\sum_{i,j\in[n]}m_{i,j}(\chi_i\otimes \chi_j)$, the statement ``$F\in \edgekernel$ if, and only if, $M_F$ is diagonal'' is equivalent to

\[\edgekernel=\Span\{\chi_i\otimes \chi_i\mid i\in [n]\}.\]

For every $\vA\in X$, the set $[n]\setminus \vA$ has
$n-k\geq k+1$ elements and therefore contains two distinct $k$-sets;
both are neighbours of $\vA$.  Hence
\Cref{lem:diagonal-feature-rigidity} applies and proves
tameness.   By~\Cref{prop:hoffman-implies-delsarte}, the Hoffman-rigid representation is Delsarte-rigid. By~\Cref{lem:delsarte-iff-schrijver}, it is Schrijver-rigid. \Cref{thm:criterion} therefore applies.
\end{proof}

\begin{corollary}
    For every $n\geq2$, the odd graph $O_n=\KG(2n-1,n-1)$ admits both an oracular and a non-oracular
commutativity gadgets, and the corresponding $\qCSP$ is $\RE$-complete in both the oracular and non-oracular settings.
\end{corollary}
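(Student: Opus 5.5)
The plan is to obtain the oracular statement directly from \Cref{prop:kneser-criterion}, and then transfer it to the non-oracular setting using the black-box result of~\cite{culf2025existence} mentioned in the introduction.

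\emph{Oracular case.} For $n\geq 2$ we have $2n-1>2(n-1)$. Hence $O_n=\KG(2n-1,n-1)$ lies in the range $n'>2k\geq 2$ of \Cref{prop:kneser-criterion}, with $n'=2n-1$ and $k=n-1\geq 1$. That proposition gives a tame Hoffman-rigid representation. Therefore \Cref{thm:criterion} applies: $O_n$ admits an (oracular) commutativity gadget, and the oracular $\qCSP(O_n)$ is $\RE$-complete. For $n=2$ the graph $O_2=K_3$ is a degenerate case, but it is still covered, since $\KG(3,1)=K_3$ and the proposition allows $k=1$.

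\emph{Girth.} Next I will verify that $O_n$ is $C_4$-free, which is the hypothesis needed in~\cite{culf2025existence}. Suppose two distinct vertices $x,y$ (each an $(n-1)$-subset of $[2n-1]$) have two distinct common neighbours $a,b$. Then $a,b\subseteq [2n-1]\setminus(x\cup y)$. Because $x\neq y$, we have $|x\cup y|\geq n$, so the complement has at most $n-1$ elements. It therefore contains at most one $(n-1)$-set, which forces $a=b$, a contradiction. So any two vertices have at most one common neighbour, and $O_n$ contains no $4$-cycle.

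\emph{Non-oracular transfer.} I will invoke the result of~\cite{culf2025existence}: for a $C_4$-free graph, an oracular commutativity gadget yields a non-oracular one. Combined with the non-oracular analogue of the reduction chain (pp-construction of 3-SAT from non-bipartite graphs, the gadget-lifting argument, and non-oracular $\RE$-completeness of 3-SAT from $\MIP=\RE$), this gives $\RE$-completeness in the non-oracular model. Membership in $\RE$ holds in both models by the standard enumeration of finite-dimensional strategies.

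The main obstacle is this last step. I need to make sure that the hypotheses of the cited transfer theorem match exactly, for instance whether it requires the gadget to be of the specific power form produced by~\cite{CJM}, or requires the graph to be a core. It is also not yet settled which hardness chain gives non-oracular $\RE$-completeness, so I will treat this as a black box as the paper announces.
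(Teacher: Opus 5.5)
Your oracular argument is correct and matches the paper: $O_n=\KG(2n-1,n-1)$ lies in the range of \Cref{prop:kneser-criterion}, so \Cref{thm:criterion} gives an oracular gadget and oracular $\RE$-completeness. Your common-neighbour count correctly justifies $C_4$-freeness, which the paper only asserts. The paper also invokes \cite[Corollary~5.28]{culf2025existence} with only $C_4$-freeness as input to conclude that the same gadget is a non-oracular commutativity gadget.

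The gap is non-oracular $\RE$-hardness. The ``non-oracular analogue of the reduction chain'' you appeal to is not an available result. Pp-constructions are not in general known to be sound for non-oracular strategies. The lifting arguments use commutation of the measurements at the vertices of a constraint (condition $(ii)$), and that is exactly what the non-oracular model drops; this is why \cite{CJM} introduces the more restrictive notion of q-no-definitions. As written, your proposal leaves this step as an unproved black box, as you yourself note.

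The missing idea is not to rerun the 3-SAT chain. Instead, reduce the oracular problem for the same template, which you have already shown to be $\RE$-complete, to its non-oracular version. The paper does this as in \cite[Proposition~103]{CJM}: $O_n$ pp-defines itself via a tree gadget, hence q-no-defines itself \cite[Proposition~100]{CJM}, and then \cite[Proposition~98]{CJM} reduces oracular $\qCSP(O_n)$ to the non-oracular one.

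The non-oracular gadget you already have could also do this job directly. Glue a copy of it along every edge $x\sim y$ of the input, identifying $u,v$ with $x,y$. Soundness of the gadget forces adjacent measurements to commute in any non-oracular solution. Completeness extends any oracular solution to the glued copies, provided the extension lives on the same Hilbert space as in \Cref{comm-gadget}. Together with membership in $\RE$, either route completes the non-oracular claim.
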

\begin{proof}
    The oracular statement is clear from~\Cref{prop:kneser-criterion}. Since $O_n$ is $C_4$-free, it follows from~\cite[Corollary 5.28]{culf2025existence} that the commutativity gadget from~\Cref{prop:kneser-criterion} also works as a non-oracular commutativity gadget. The $\RE$-completeness of the non-oracular $\qCSP$ is then proved in a similar way as in~\cite[Proposition~103]{CJM}, by observing that $O_n$ pp-defines itself via a tree gadget and, thus, it $q$-no-defines itself (see~\cite[Proposition~100]{CJM} and~\cite[Definition~93]{CJM} for the terminology). By~\cite[Proposition~98]{CJM}, $\qCSP(O_n)$ reduces to its non-oracular version, and the proof is concluded.
\end{proof}

    This answers the question on the existence of commutativity gadgets and complexity for odd graphs that was posed in~\cite[Question 5]{culf2025existence}, in both the oracular and non-oracular settings.

\subsubsection{Complements of Johnson graphs}
\label{subsec:johnson-complement}

\begin{figure}[t]
  \centering
  \includegraphics[width=.75\linewidth]{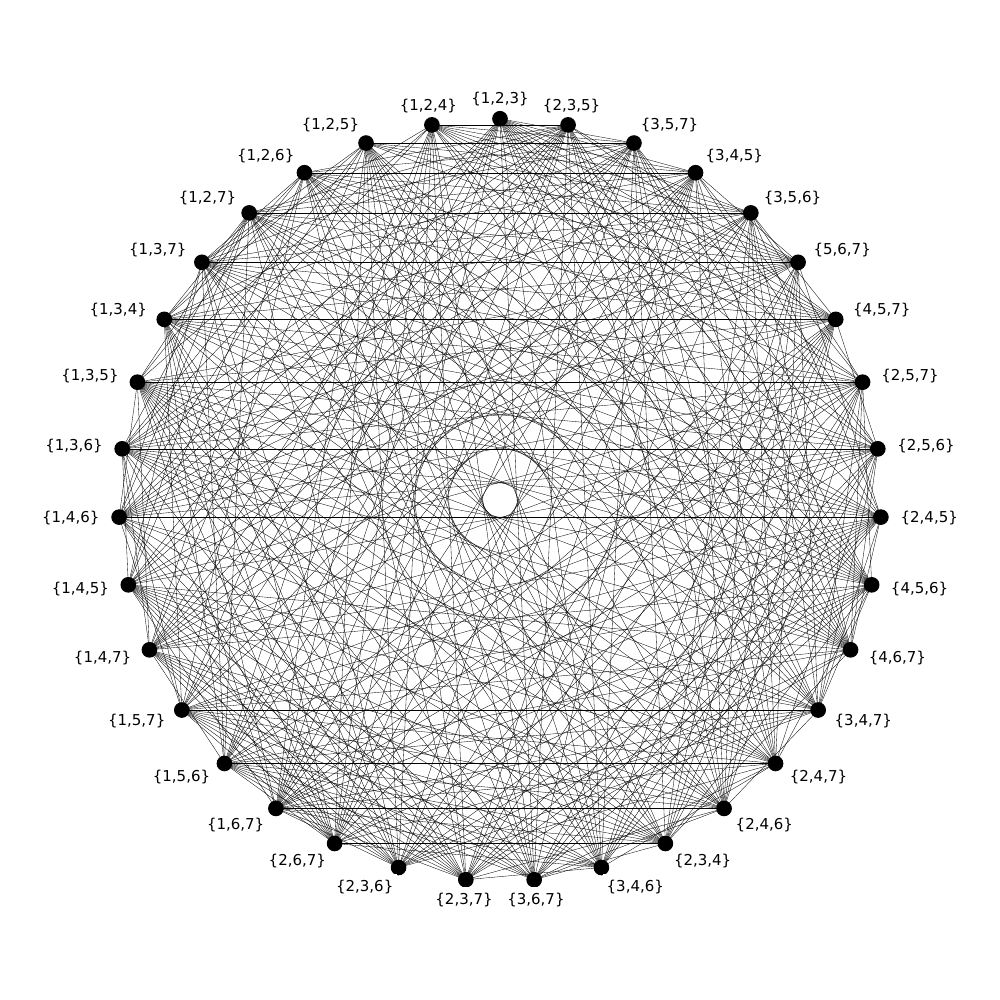}
  \caption{The complement $\overline{J(7,3)}$ of the Johnson graph.  Its vertex labels are the $3$-subsets of $[7]=\{1,\ldots,7\}$, and two distinct vertices $\vA,\vB$ are adjacent precisely when $|\vA\cap \vB|\leq1$.  The graph has $35$ vertices and $385$ edges.}
  \label{fig:johnson-complement-j7-3}
\end{figure}

Let $J(n,k)$ be the Johnson graph on $\binom{[n]}k$, whose edges join
pairs with intersection of size $k-1$.  Its complement has adjacency relation $\vA\sim \vB\quad\Longleftrightarrow\quad |\vA\cap \vB|\leq k-2$.

As shown in~\Cref{tab:AS-graphs}, the adjacency matrix of $\overline{J(n,k)}$ equals $\sum_{i=2}^kA_i$ and it therefore arises from the Johnson scheme with $\Gamma=\{2,\dots,k\}$.
We show that this graph has a Delsarte-rigid representation.
Note that the full power of the Delsarte machinery is needed in this case: we  show in~\Cref{Prop:Delsarte necessary}, that $\overline{J(n,k)}$ does not admit a Hoffman-rigid representation.

We will need to express elements from the dual basis in terms of the primal basis. To this end, we write the normalised change of basis matrix $Q=(Q_{ij})$ defined by

\[E_j=\frac{1}{v}\sum_{i=0}^sQ_{ij}A_i,\]
so that $PQ=vI$, where, as usual, $v=|X|$. We determine $Q$; using~\cref{eqn_2608_1840} and expanding $E_j$ gives

\[\langle E_j, A_i \rangle =\left\langle\frac{1}{v}\sum_{\ell=0}^sQ_{\ell j}A_{\ell},A_i\right\rangle=\frac{1}{v} Q_{ij} |R_i|.\]
$P_{ji}$ is the eigenvalue of $A_i$ on $E_j\C^X$. Hence, $A_iE_j=P_{ji}E_j$ and taking traces gives $\langle E_j, A_i \rangle=P_{ji}\rank(E_j)$, thus 

\begin{equation}\label{eq: Q_{i,j}}
    Q_{ij}=\frac{v\rank(E_j)}{|R_i|}P_{ji}.
\end{equation}

The $j=k$ case of~\cite[Theorem~6.5.2]{godsil2016erdos} gives $P_{ki}=(-1)^i\binom ki$. So, since $|R_i|=\binom nk\binom ki\binom Ni$, we get
\begin{equation}\label{eqn: Q_{ik}}
Q_{ik}=\frac{\binom nk m_k}{\binom nk\binom ki\binom Ni}(-1)^i\binom ki=(-1)^i\frac{m_k}{\binom Ni},
\end{equation}
where $m_k=\rank(E_k)=\binom{n}{k}-\binom{n}{k-1}$.
\begin{proposition}
\label{prop:johnson-complement}
If $n>2k\geq4$, then $T\colon \vA\mapsto T_\vA=\binom{\vA}{k-1}$ is a Delsarte-rigid tame 
representation of $\overline{J(n,k)}$.  Consequently, the graph admits a
commutativity gadget and the corresponding $\qCSP$ is $\RE$-complete.
\end{proposition}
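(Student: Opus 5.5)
The plan is to check the hypotheses of \Cref{thm:criterion} through the association-scheme machinery, with $\Gamma=\{2,\dots,k\}$. Here $P=\binom{[n]}{k-1}$ and $r=k$. The map $T$ is a disjointness representation for three reasons: if $|\vA\cap\vB|\leq k-2$ then $\vA,\vB$ share no $(k-1)$-subset; $\vA$ is the union of its $(k-1)$-subsets, which gives injectivity; and every $(k-1)$-set lies in some $k$-set. The feature function of $\vR\in P$ is $\chi_\vR(\vA)=\indfunction{\vR\subseteq\vA}$, so $\incspace=\mathcal W_{k-1}$. By \Cref{lem:johnson-modules}, $\incspace=\bigoplus_{i=0}^{k-1}E_i\C^X$, so $T$ is scheme-compatible with index set $\Lambda=\{1,\dots,k-1\}$. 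That lemma also shows that the $\chi_\vR$ are linearly independent.

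The graph is connected and non-bipartite because it contains $\KG(n,k)$ as a spanning subgraph, and $\KG(n,k)$ is connected and non-bipartite (see \Cref{subsec:kneser-verification}). Hence, by \Cref{lem:delsarte-iff-schrijver}, it suffices to prove Delsarte-rigidity and tameness.

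For Delsarte-rigidity I would apply \Cref{prop:bose-mesner-spectral-test}. The plan is to write down the candidate
\[
B=(\kappa+1)E_0-I+(\theta_k+1)E_k=\kappa E_0-\sum_{j=1}^{k-1}E_j+\theta_kE_k,
\]
which acts as $-1$ on every $E_j\C^X$ with $j\in\Lambda$, and then to choose $\kappa,\theta_k$ so that $B\in\Span\{A_2,\dots,A_k\}$. Expanding $E_0=J/v$ and $E_k=\frac1v\sum_iQ_{ik}A_i$ with \eqref{eqn: Q_{ik}}, the coefficients of $A_0$ and $A_1$ must vanish. Writing $N=n-k$, this gives the two linear conditions
\[
(\kappa+1)+(\theta_k+1)m_k=v,\qquad (\kappa+1)-(\theta_k+1)\frac{m_k}{N}=0.
\]
Their solution is $\kappa+1=\frac{v}{n-k+1}=\binom{n}{k-1}/k$ and $\theta_k+1=\frac{vN}{m_k(N+1)}>0$. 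This is consistent with $\mu=k/\binom{n}{k-1}$, and $\kappa>1$ is clear. The remaining condition $\theta_k<\kappa$ is equivalent to $m_k>n-k$. Using $m_k=\binom nk\frac{n-2k+1}{n-k+1}$, this holds comfortably for $n>2k\geq4$. $B$ is real symmetric, so the proposition yields Delsarte-rigidity.

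For tameness I would use \Cref{lem:local-incidence-edge-kernel} followed by \Cref{lem:diagonal-feature-rigidity}. Every vertex has degree at least two, since the degree is at least that of $\KG(n,k)$. Condition $(ii)$ holds because $|S_\vR|=n-k+1\geq3$. Condition $(iii)$ holds because two distinct $k$-sets containing $\vR$ intersect exactly in $\vR$. The main obstacle is condition $(i)$.

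For $(i)$, fix $\vB\in X$. The non-neighbours of $\vB$ are the sets $\vA$ with $|\vA\cap\vB|\geq k-1$. Each $\chi_\vR$ with $\vR\subseteq\vB$ is supported on them, since $\vR\subseteq\vA$ forces $|\vA\cap\vB|\geq k-1$. Conversely, suppose $f=\sum_\vR c_\vR\chi_\vR$ vanishes on every $\vA$ with $|\vA\cap\vB|\leq k-2$. I would show $c_\vR=0$ whenever $|\vR\cap\vB|=s\leq k-2$, by induction on $s$. Fix $S\subseteq\vB$ with $|S|=s$, and test $f$ on $\vA=S\cup Z$ with $Z\in\binom{[n]\setminus\vB}{k-s}$; such an $\vA$ has $|\vA\cap\vB|=s$. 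The terms indexed by $\vR$ with $\vR\cap\vB\subsetneq S$ vanish by induction, so we obtain $\sum_{W\in\binom{Z}{k-1-s}}c_{S\cup W}=0$ for every such $Z$. Because $|[n]\setminus\vB|=n-k>k-s$, the claim from the proof of \Cref{lem:kneser-cross-sums} (the vanishing of all such sums over subsets of size one less) forces every $c_{S\cup W}$ to be zero. Once the induction reaches $s=k-2$, every coefficient with $\vR\not\subseteq\vB$ has been killed, so $f\in\Span\{\chi_\vR\mid\vR\subseteq\vB\}$, which is $(i)$.

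With $(i)$--$(iii)$ in hand, \Cref{lem:local-incidence-edge-kernel} gives $\edgekernel=\Span\{\chi_\vR\otimes\chi_\vR\mid\vR\in P\}$. Then \Cref{lem:diagonal-feature-rigidity} gives tameness, and \Cref{thm:criterion} concludes.
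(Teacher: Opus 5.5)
Your checks of the representation, scheme-compatibility, connectivity and Delsarte-rigidity are correct and agree with the paper; your $\kappa,\theta_k$ give exactly $B=\frac{v}{N+1}E_0+\frac{N}{N+1-k}E_k-I$. Conditions $(ii)$, $(iii)$, the degree bound, and the reduction of tameness to \Cref{lem:local-incidence-edge-kernel} and \Cref{lem:diagonal-feature-rigidity} are also fine.

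\textbf{The gap is in your proof of condition $(i)$.} The claim inside the proof of \Cref{lem:kneser-cross-sums} concerns numbers indexed by \emph{single elements} whose sums over all $k$-subsets vanish. Your induction step needs a different statement. Write $Y=[n]\setminus\vB$ and $t=k-1-s$. You need that numbers $u_W=c_{S\cup W}$, indexed by $t$-subsets $W\subseteq Y$, must vanish if their sums over the $t$-subsets of every $(t+1)$-subset of $Y$ vanish. This is the cited claim only when $t=1$, i.e.\ $s=k-2$. In general it asserts that the inclusion matrix of $t$-subsets versus $(t+1)$-subsets of an $(n-k)$-set has full column rank. That holds only when $n-k\geq 2t+1$. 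At $s=0$ this requires $n\geq 3k-1$, which $n>2k$ does not imply once $k\geq3$.

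For a concrete failure, take $n=7$, $k=3$, $\vB=\{1,2,3\}$, $Y=\{4,5,6,7\}$. Set $c_{\{4,5\}}=c_{\{6,7\}}=1$, $c_{\{4,6\}}=c_{\{5,7\}}=-1$ and $c_{\{4,7\}}=c_{\{5,6\}}=0$. These satisfy all four level-$0$ equations, since every $3$-subset of $Y$ contains one pair from each perfect matching of $Y$. So your base case does not force these coefficients to vanish. They do vanish in the end, but only because vertices meeting $\vB$ in one element impose further equations on them, and your level-by-level induction never uses those.

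\textbf{How to repair it.} The paper avoids the coefficient system entirely. It uses $\incspace=\ker E_k$ and notes that a function supported on the non-neighbourhood $C$ of $\vB$ lies in $\incspace$ iff its restriction lies in the kernel of the principal submatrix of $E_k$ on $C$. Here $C$ consists of $\vB$ and the $k(n-k)$ sets $(\vB\setminus\{a\})\cup\{b\}$. Via the $Q_{ik}$, that submatrix is a multiple of $I-\frac1NA_1+\binom N2^{-1}A_2$ restricted to $C$. Solving this small linear system gives dimension at most $k$.

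An elementary alternative, in the spirit of the Hamming case, also works.
\begin{itemize}
\item Write $\vB=\{a_1,\dots,a_k\}$, pick distinct $b_1,\dots,b_k\notin\vB$, and set $g=\sum_{\epsilon\in\{0,1\}^k}(-1)^{|\epsilon|}\be_{\vA_\epsilon}$. Here $\vA_\epsilon$ is the transversal of the pairs $\{a_i,b_i\}$ that takes $b_i$ exactly when $\epsilon_i=1$.
\item Every $(k-1)$-set lies in either no transversal or exactly two, with opposite signs. Hence $g\perp\incspace$.
\item Pairing $f$ with $g$ gives $f(\vB)=\sum_i f((\vB\setminus\{a_i\})\cup\{b_i\})$, because every $\vA_\epsilon$ with $|\epsilon|\geq2$ is a neighbour of $\vB$.
\item Since $n-k-2\geq k-1$, you may change one $b_i$ at a time. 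So $f((\vB\setminus\{a_i\})\cup\{b\})=\varphi_i$ depends only on $i$.
\item Therefore $f=\sum_i\varphi_i\chi_{\vB\setminus\{a_i\}}$, which is condition $(i)$.
\end{itemize}
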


\begin{proof}
We claim that the map $T$ given in the proposition statement is a $(P,r)$-disjointness representation. The feature set  of this representation is $P=\binom{[n]}{k-1}$, and the characteristic vectors span $\incspace=\mathcal W_{k-1}$, with $r=k$.
Put $N=n-k$, $v=\binom nk$, and $X=\binom{[n]}k$.

Two vertices $\vA,\vB\in \overline{J(n,k)}$ are adjacent if, and only if, they do not share a $(k-1)$-subset or, equivalently, $T_\vA\cap T_\vB=\emptyset$. Thus, $T$ is a homomorphism.
Also, if $T_\vA=T_\vB$, then $\vA=\bigcup T_\vA=\bigcup T_\vB=\vB$ so $T$ is a disjointness representation with $\mu=\frac{k}{\binom n{k-1}}=\frac{N+1}{v}$.

Since $\incspace= \mathcal W_{k-1}$ we get by~\Cref{lem:johnson-modules} that $\incspace= \C\1\oplus \bigoplus_{i=1}^{k-1} E_i\C^X$. Thus, $T$ is scheme-compatible with index set $\{1,\dots,k-1\}$.

In order to use~\Cref{prop:bose-mesner-spectral-test} to prove the Delsarte-rigidity of $T$,
define
\[B= \kappa E_0 - \sum_{j=1}^{k-1}E_j + \theta E_k,  \]
for some $\kappa>1$ and $-1<\theta<\kappa$ to be determined and such that $B\in\Span\{A_i\mid i\in\Gamma\}$.
Note that the conditions in~\cref{eqn_1611_2508} are satisfied by definition. Since $\sum_{i=0}^kE_i=I$, we get $B= (\kappa+1) E_0 + (\theta+1) E_k  - I.$

We perform a change of basis and use the coefficients $Q_{ik}$ defined in~\cref{eqn: Q_{ik}} to impose that $B$ lies in $\Span\{A_i\mid i\in\Gamma\}$.
Using $Q_{i0}=1$ for all $0\le i\le k$ we get

\[B= \frac{(\kappa+1)}{v}\sum_{i=0}^kA_i + \frac{(\theta+1)}{v}\sum_{i=0}^kQ_{ik}A_i  - A_0.\]
Substituting $i=0$ into~\cref{eqn: Q_{ik}} and reordering gives us 
\[Q_{0k}=\frac{m_k}{1}, \qquad c_0=\frac{\kappa+1+(\theta+1)m_k}{v}-1,\]
where $c_0$ is the coefficient of $A_0$ in $B$; repeating for $i=1$ gives

\[Q_{1k}=-\frac{m_k}{N}, \qquad c_1=\frac{\kappa+1-(\theta+1)m_k/N}{v}.\]

We write $B$ as $\sum_{i=0}^k c_iA_i$; now the condition that $B\in\Span\{A_i\mid i\in\Gamma\}$ is that $c_0=c_1=0$.
Thus, we get the conditions

\[\kappa+1+(\theta+1)m_k=v, \qquad \kappa+1=\frac{(\theta+1)m_k}{N}\]
and solving for $(\kappa+1)$ and $(\theta+1)$ results in
\[ B=\frac v{N+1}E_0+\frac N{N+1-k}E_k-I.\]
By construction, $B\in\Span\{A_i\mid i\in\Gamma\}$, while its eigenvalues on $E_0$, on $E_1\oplus\cdots\oplus E_{k-1}$, and on $E_k$, respectively, are $\kappa=\frac v{N+1}-1$, $-1$, and $\theta=\frac{k-1}{N+1-k}$.
Note that
\[\kappa+1=\frac{v}{N+1}=\frac{1}{N+1}\binom{n}{k}=\frac{1}{N+1}\frac{(N+k)\dots(N+1)}{k!}=\frac{(N+k)\dots(N+2)}{(k)\dots(2)},\]
so
\[\kappa+1=\prod_{i=2}^k\frac{N+i}{i}\geq\frac{N+2}{2}>\max\left\{2,\frac{N}{N+1-k}\right\}.\]
Here the last inequality uses $N> k\ge 2$.  Hence
$\kappa>\max\{1,(k-1)/(N+1-k)\}$.  This is~\cref{eqn_1611_2508} for $\Lambda=\{1,\ldots,k-1\}$, so \Cref{prop:bose-mesner-spectral-test} applies and $T$ is a Delsarte-rigid representation.

In order to complete the proof we still need to show that the representation is tame.
For this, we apply~\Cref{lem:local-incidence-edge-kernel}. Note that for the given representation, we know that $ S_\vR=\{\vA\in X \mid \vR\subseteq \vA\}$ and $\chi_\vR(x)=\indfunction{x\in S_\vR}$ for any $\vR\in\binom{[n]}{k-1}$.

The second and third assumptions of this lemma can easily be seen to be true: every $S_\vR$ has $n-k+1\geq 3$ elements and if $\vA,\vB\in S_\vR$ are distinct then they have $\vR$ as their unique common $(k-1)$-subset; so, $T_\vA\cap T_\vB=\{\vR\}$.

Fix $\vY\in X$; we investigate those functions that are supported on the non-neighbours of $\vY$.
Let $C_\vY=X\setminus N_{\overline{J(n,k)}}(\vY)=\{\vA\in X\colon |\vA\cap \vY|\ge k-1\}$; we show that 

\[\{f\in\incspace\mid\operatorname{supp}(f)\subseteq C_\vY\} =\Span\{\chi_\vR \mid \vR\in\tbinom {\vY}{k-1}\}.\]

For this we will use that $\incspace=\ker E_k$ and work with the principal submatrix $E_{k,\vY}=[(E_k)_{\vA,\vB}]_{\vA,\vB\in C_\vY}$. Any function $f$ supported on $C_\vY$ satisfies $\langle f,E_kf\rangle =\langle f|_{C_\vY},E_{k,\vY}f|_{C_\vY}\rangle$ and, since $E_k$ is an orthogonal projection, this scalar is zero exactly when $E_kf=0$. Hence, if we identify every function $f$ supported on $C_\vY$ with $f|_{C_\vY}$, the space $\{f\in\incspace \mid \operatorname{supp}(f)\subseteq C_\vY\}$ is exactly $\ker E_{k,\vY}$. Set $M=\frac{v}{m_k}E_{k,\vY}$.

For two elements $\vA,\vB$ of $C_\vY$, we have that $|\vA\cap \vB|\ge k-2$. Thus, we use~\cref{eqn: Q_{ik}} to write

\begin{equation}\label{eq: M-decompositon}
    M= \frac{Q_{0k}}{m_k}A_{0,\vY}+ \frac{Q_{1k}}{m_k}A_{1,\vY}+ \frac{Q_{2k}}{m_k}A_{2,\vY}= I-\frac{1}{N}A_{1,\vY}+\frac{1}{\binom{N}{2}}A_{2,\vY}
\end{equation}
where $A_{s,\vY}$ denotes the restriction of $A_s$ to $C_\vY$. We will compute $\ker M$.
Every element of $C_\vY$ is either $\vY$ or a neighbour of $\vY$ in $J(n,k)$ that can be written as
$\vY_{i,j}=(\vY\setminus \{i\})\cup \{j\}$ for $i\in \vY$ and $j\in[n]\setminus \vY$.
We get $C_\vY= \{\vY\}\cup \{\vY_{i,j}\mid i\in \vY, j\notin \vY\}$.
Let $f\in \ker M$, we write $a=f(\vY)$ and $u_{i,j}=f(\vY_{i,j})$ and we define the row sums, column sums, and total sum

\[r_i=\sum_{j\notin \vY}u_{i,j}, \qquad c_j=\sum_{i\in \vY}u_{i,j}, \qquad s=\sum_{\substack{i\in \vY\\ j\notin \vY}}u_{i,j}. \]
Thus, since $|\vY\cap \vY_{i,j}|= k-1$ for every $i$ and $j$,~\cref{eq: M-decompositon} gives

\[0=(Mf)(\vY)=a -\sum_{\substack{i\in \vY\\ j\notin \vY}}\frac{u_{i,j}}{N}+0=a-\frac{s}{N}\]
and thus, $a=\frac{s}{N}$.

Now evaluating at $\vY_{i,j}$; we want those vertices $\vZ\in C_\vY\setminus \{\vY\}$ such that $|\vY_{i,j}\cap \vZ|=k-1$. These are exactly $\vY$ , those $\vY_{i,\ell}$ such that $j\neq \ell$ (we are replacing $i$ with a different $\ell$) and $\vY_{h,j}$ with $i\neq h$ (we are replacing a different $h$ with $j$). The sum over these neighbours is $a+r_i+c_j-2u_{i,j}$.

We also know that $|\vY_{i,j}\cap \vY_{h,\ell}|=k-2$ precisely when $i\neq h$ and $j\neq \ell$, so the sum over these neighbours is $s-r_i-c_j+u_{i,j}$ and thus we write
\[0=(Mf)(\vY_{i,j})=u_{i,j}-\frac{a+r_i+c_j-2u_{i,j}}{N}+\frac{2(s-r_i-c_j+u_{i,j})}{N(N-1)}.\]
Simplifying and substituting $a=s/N$ gives
\begin{equation}\label{eq: Nuix}
    Nu_{i,j}=r_i+c_j-\frac{s}{N}.
\end{equation}
Summing~\cref{eq: Nuix} over $i\in \vY$ gives $Nc_j=s+kc_j-ks/{N}$ and thus $c_j= s/N$ since $N\neq k$ for every $j\notin \vY$. Substituting back into~\cref{eq: Nuix} yields
\[u_{i,j}=\frac{r_i}{N}=\varphi_i,\]
where $\varphi_i$ depends only on $i$. In particular,
\[a=\frac{s}{N}=\sum_{i\in \vY}\frac{r_i}{N}=\sum_{i\in \vY}\varphi_i.\]
Thus, any $f\in \ker M$ is determined by the $k$ parameters $(\varphi_i)_{i\in \vY}$, and $\dim(\ker M)$ is at most  $k$.

Now consider any $f\in \Span\{\chi_\vR:\vR\in\binom {\vY}{k-1}\}$ and fix $\vA\in N_{\overline{J(n,k)}}(\vY)$. Since $\vA$ is a neighbour of $\vY$, $T_\vA\cap T_\vY=\emptyset$. Hence, $\chi_\vR(\vA)=0$ for every component $\chi_\vR$ of $f$. Thus $f(\vA)=0$ for every $\vA\in N_{\overline{J(n,k)}}(\vY)$. Therefore, $f$ is supported on the non-neighbours of $\vY$ and thus $\ker M$ contains, as a subspace, $\Span\{\chi_\vR \mid \vR\in\tbinom {\vY}{k-1}\}$.
By~\Cref{lem:johnson-modules}, the $k$ functions $\chi_\vR$ are linearly independent and thus $\ker M$ has dimension at least $k$.
Thus,
\begin{equation}\label{eq:A=M=B}
    \{f\in\incspace\mid\operatorname{supp}(f)\subseteq C_\vY\} =\ker M=\Span\{\chi_\vR \mid \vR\in\tbinom {\vY}{k-1}\}
\end{equation}
and the representation $T$ fulfils the hypotheses of~\Cref{lem:local-incidence-edge-kernel}. By said lemma, $ \edgekernel
 =\Span\{\chi_\vR\otimes\chi_\vR \mid \vR\in\tbinom{[n]}{k-1}\}$.
 
The functions $\chi_\vR$ are linearly independent by
\Cref{lem:johnson-modules}.  Moreover, for every $\vA\in X$, the
set $[n]\setminus \vA$ has $N>k$ elements and contains two distinct
$k$-sets, both of which are neighbours of $\vA$.  Therefore
\Cref{lem:diagonal-feature-rigidity} applies and proves tameness. The connected non-bipartite graph $\KG(n,k)$ is a spanning subgraph of
$\overline{J(n,k)}$.  Hence the latter graph is connected and
non-bipartite.  By~\Cref{lem:delsarte-iff-schrijver}, this Delsarte-rigid representation is Schrijver-rigid, and~\Cref{thm:criterion} applies.
\end{proof}

\begin{proposition}\label{Prop:Delsarte necessary}
If $k\geq3$, then  $\overline{J(n,k)}$ has no Hoffman-rigid
representation. For $k=2$, the disjointness representation in \Cref{prop:johnson-complement} is Hoffman-rigid. 
\end{proposition}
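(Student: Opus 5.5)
For $k\geq3$ I plan to argue by contradiction via Remark~\ref{rem:hoffman-tightness}. A Hoffman-rigid representation forces every feature fibre to be a maximum independent set of size $-\tau v/(d-\tau)$, where $d$ is the degree and $\tau$ the least eigenvalue of $\overline{J(n,k)}$. So it is enough to show that Hoffman's ratio bound is not tight for $\overline{J(n,k)}$ when $k\geq 3$, that is, $\alpha(\overline{J(n,k)})<-\tau v/(d-\tau)$.

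For the independence number: independent sets of $\overline{J(n,k)}$ are cliques of $J(n,k)$, i.e., families of $k$-sets pairwise meeting in $k-1$ elements. These are contained either in a star $\{\vA\supseteq\vR\}$ with $|\vR|=k-1$, of size $n-k+1$, or in $\binom{\vZ}{k}$ with $|\vZ|=k+1$, of size $k+1$. Since $n>2k$, we get $\alpha=n-k+1=N+1$. Alternatively, the Delsarte-rigid representation of Proposition~\ref{prop:johnson-complement} already gives $\alpha=\mu v=N+1$.

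For the spectrum: the adjacency matrix is $J-I-A_1$. Its eigenvalue on $E_0$ is $d=v-1-kN$, and on $E_j$ for $j\geq1$ it is $-1-P_{j1}$, where $P_{j1}=(k-j)(N-j)-j$ is the standard Johnson eigenvalue. The function $j\mapsto P_{j1}$ is decreasing, so the least eigenvalue on the nontrivial modules is $\tau=-1-P_{11}=-1-(k-1)(N-1)+1=-(k-1)(N-1)$, attained only at $j=1$ for $k\geq2$.

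The main computation is then to verify
\[
-\tau v/(d-\tau)>N+1,
\qquad\text{equivalently}\qquad
(k-1)(N-1)\,v>(N+1)\bigl(v-1-kN+(k-1)(N-1)\bigr).
\]
Simplifying gives $v\bigl((k-1)(N-1)-(N+1)\bigr)>(N+1)(-N-k)$. The right-hand side is negative. The left-hand side is positive, since for $k\geq3$ we have $(k-1)(N-1)\geq 2N-2>N+1$, using $N>k\geq3$. So the inequality holds, the bound is strict, and no Hoffman-rigid representation exists. Getting the eigenvalue formula and its monotonicity right is the step I expect to need the most care; I will cite \cite[Theorem~6.5.2]{godsil2016erdos} for $P_{j1}$.

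For $k=2$: the index set of the representation in Proposition~\ref{prop:johnson-complement} is $\{1\}$, and by the computation above $\tau$ is attained exactly at $j=1$. Hence $\Lambda_\tau=\{1\}$, and the representation is Hoffman-rigid by Definition~\ref{def:hoffman-feature}. As a consistency check, for $k=2$ the graph $\overline{J(n,2)}$ coincides with $\KG(n,2)$, and the representation matches the Kneser case, whose least eigenvalue lies on $E_1$.
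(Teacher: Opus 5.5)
Your proposal is correct and follows essentially the same route as the paper: you get $\alpha(\overline{J(n,k)})=N+1$ from the clique structure of $J(n,k)$, locate the least eigenvalue $\tau=-(k-1)(N-1)$ of the complement uniquely on $E_1\mathbb C^X$, show that the Hoffman ratio bound $\frac{(k-1)(N-1)v}{v-n}$ strictly exceeds $N+1$ for $k\geq3$, and conclude via the Hoffman-tightness remark. The only minor differences are that for $k=2$ you check $\Lambda_\tau=\{1\}$ directly from the spectrum, whereas the paper identifies the representation with the Hoffman-rigid Kneser representation of $\KG(n,2)$, and that you verify the strict inequality explicitly, which the paper only asserts.
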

\begin{proof}  
An independent set in $\overline{J(n,k)}$ is a family of
$k$-sets with pairwise intersections of size at least $k-1$. Consider distinct $\vA,\vB$ in any such family $\mathcal F$. There are two cases. The first case is that every $\vY\in \mathcal F$ contains $\vA\cap\vB$ and so it has maximum size $N+1$. The second case is that $\vY\subseteq \vA\cup\vB$ for every $\vY\in \mathcal F$, and it then has a maximum size of $\binom{k+1}{k}=k+1$. Thus $\alpha(\overline{J(n,k)})=\max\{N+1,k+1\}=N+1$.

The graph $\overline{J(n,k)}$ has degree $v-1-\binom{k}{k-1}\binom{N}{1}=v-1-kN$ and from~\cite[Theorem 6.3.2]{godsil2016erdos}, the Johnson graph has eigenvalues $P_{j1}=kN-j(n+1-j)$ on the spaces $E_j$, and these decrease strictly with $j$.
Since the adjacency matrix of $\overline{J(n,k)}$ is $J - I - A_1$ and $J = vE_0$, its eigenvalue on $E_j\mathbb{C}^X$ is $v\delta_{j,0} - 1 - P_{j,1}$. Hence the least eigenvalue of $\overline{J(n,k)}$ is attained only on $E_1\mathbb{C}^X$ and equals $\tau = -1 - P_{1,1} = -1 - kN + n = -(k - 1)(N - 1)$.

The Hoffman bound of $\overline{J(n,k)}$ is therefore 

\[\frac{(k-1)(N-1)v}{(v-1-kN)+(k-1)(N-1)}=\frac{(k-1)(N-1)v}{v-k-N}=\frac{v(k-1)(N-1)}{v-n}.\]
This equals $N+1$ for $k=2$, and is strictly larger for $k\geq3$.
When $k=2$, the graph is $KG(n,2)$ and the representation in~\Cref{prop:johnson-complement} is the representation from~\Cref{prop:kneser-criterion} which is Hoffman-rigid. When $k \geq 3$, the Hoffman bound is not tight, so $\overline{J(n,k)}$ admits no Hoffman-rigid representation by~\Cref{rem:hoffman-tightness}.
\end{proof}

\subsection{The Grassmann scheme}
\label{sec:grassmann-scheme}

For a prime power $q$ and $V=\mathbb F_q^n$, write
\[
                   \Gr_q(n,k)\coloneqq \{U\leq V \mid \dim U=k\},
\]
which is the vertex set for the Grassmann scheme on parameters $q$,$n$, and $k$; we study here the two derived graphs shown in~\Cref{tab:AS-graphs} in the range $n>2k$.

Define
\[
    \qbinom{a}{b} \coloneqq  % 
    \prod_{i=0}^{b-1}\frac{q^{a-i}-1}{q^{b-i}-1},
    \qquad
    [a]_q\coloneqq \qbinom{a}{1}
\]
as the Gaussian binomial coefficient (number of $b$-dimensional subspaces of $\mathbb F^a_q$) and the number of one-dimensional
subspaces of $\mathbb F_q^a$, respectively. Also,
$\qbinom ab=0$ when $b>a$.

For a $j$-subspace $R\leq V$, let
$\chi_R(U)=\indfunction{R\leq U}$, and put
\[
       \mathcal W_j\coloneqq \operatorname{span}\{\chi_R \mid \dim R=j\}
       \subseteq\mathbb C^X,
       \qquad \mathcal W_{-1}\coloneqq 0.
\]
By \cite[Theorem~9.4.1]{godsil2016erdos}, we get the following lemma.

\begin{lemma}
\label{lem:grassmann-modules}
    For $0\le j\le k$,
    \[
    \mathcal W_j=\bigoplus_{i=0}^{j}E_i\mathbb C^X,
    \qquad
    \dim \mathcal W_j=\qbinom{n}{j}.%
    \]
    Consequently, the functions $\chi_R$ form a basis of $\mathcal W_j$.
\end{lemma}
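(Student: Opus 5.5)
The plan mirrors the proof of \Cref{lem:johnson-modules}, with subsets replaced by subspaces. The Grassmann scheme analogue of \cite[Theorem~6.3.3]{godsil2016erdos}, namely \cite[Theorem~9.4.1]{godsil2016erdos} (in Godsil--Meagher notation $\mathcal W_j=\operatorname{col}(W_{j,k}^\top)$, with $W_{j,k}$ the inclusion matrix of $j$-subspaces versus $k$-subspaces), says three things for $0\le i\le k\le n/2$. The primitive modules are $E_i\mathbb C^X=\mathcal W_i\cap\mathcal W_{i-1}^\perp$. Their dimensions are $\qbinom ni-\qbinom n{i-1}$. Finally, $\mathcal W_{i-1}\subseteq\mathcal W_i$, so that $E_0+\cdots+E_j$ is the orthogonal projection onto $\mathcal W_j$.

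I would first check the nesting $\mathcal W_{j-1}\subseteq\mathcal W_j$ directly, in case it is not stated explicitly in the cited theorem. For a $(j-1)$-subspace $R'$ and $U\in X$ with $R'\le U$, the number of $j$-subspaces $R$ with $R'\le R\le U$ is $[k-j+1]_q$. This count is independent of $U$, so
\[
\sum_{R\geq R',\,\dim R=j}\chi_R=[k-j+1]_q\,\chi_{R'}.
\]
Hence $\chi_{R'}\in\mathcal W_j$. Iterating gives the chain $\mathcal W_0=\C\1\subseteq\mathcal W_1\subseteq\cdots\subseteq\mathcal W_k=\C^X$.

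Combining the chain with $E_i\mathbb C^X=\mathcal W_i\cap\mathcal W_{i-1}^\perp$ shows that $\mathcal W_j$ is the orthogonal direct sum $\bigoplus_{i=0}^jE_i\mathbb C^X$. Summing the module dimensions telescopes to
\[
\dim\mathcal W_j=\sum_{i=0}^j\left(\qbinom ni-\qbinom n{i-1}\right)=\qbinom nj.
\]
Since $\mathcal W_j$ is spanned by the $\qbinom nj$ functions $\chi_R$ with $\dim R=j$, and its dimension equals their number, these functions are linearly independent and hence form a basis.

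The only real obstacle is the dimension count, which amounts to full rank of the inclusion matrix $W_{j,k}$ for $j\le k\le n-j$; here $n>2k$ ensures this range holds. If I could not quote it from \cite[Theorem~9.4.1]{godsil2016erdos}, I would prove it via Kantor's theorem, or by showing $W_{j,k}W_{j,k}^\top$ is positive definite. That matrix lies in the Bose--Mesner algebra of the Grassmann scheme on $j$-subspaces, and its eigenvalues are explicit positive Gaussian-binomial expressions.
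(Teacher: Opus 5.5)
Your proposal is correct and takes essentially the paper's approach: the paper just cites \cite[Theorem~9.4.1]{godsil2016erdos}, exactly as in the Johnson case. That is, primitive modules $E_i\mathbb C^X=\mathcal W_i\cap\mathcal W_{i-1}^\perp$ of dimension $\qbinom ni-\qbinom n{i-1}$, a telescoping dimension count, and the observation that $\qbinom nj$ spanning functions in a $\qbinom nj$-dimensional space form a basis. Your direct check of the nesting $\mathcal W_{j-1}\subseteq\mathcal W_j$ via the $[k-j+1]_q$ count, and your fallback for the rank of the inclusion matrix, are correct self-contained additions that the paper leaves to the citation.
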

\noindent This is a direct analogue to~\Cref{lem:johnson-modules} for use in the proofs of~\Cref{thm:qKneser} and~\Cref{prop:grassmann-complement} in~\Cref{subsec:q-kneser-verification} and~\Cref{subsec:grassmann-complement}.
Hence, the methods we use for the proofs of this section are analogous to those in~\Cref{sec:johnson-scheme}.
In addition, the proof proceeds in the same manner in both cases; the key difference between them being that all binomial coefficients of the first version are to be replaced with q-binomial coefficients in the latter version. \Cref{lem:johnson-modules} is obtained from~\cite[\S~6]{godsil2016erdos}, while~\Cref{lem:grassmann-modules} adapts results from~\cite[\S~9.4]{godsil2016erdos}. %

\subsubsection{\texorpdfstring{$q$}{q}-Kneser graphs}
\label{subsec:q-kneser-verification}

\begin{figure}[t]
  \centering
  \includegraphics[width=.75\linewidth]{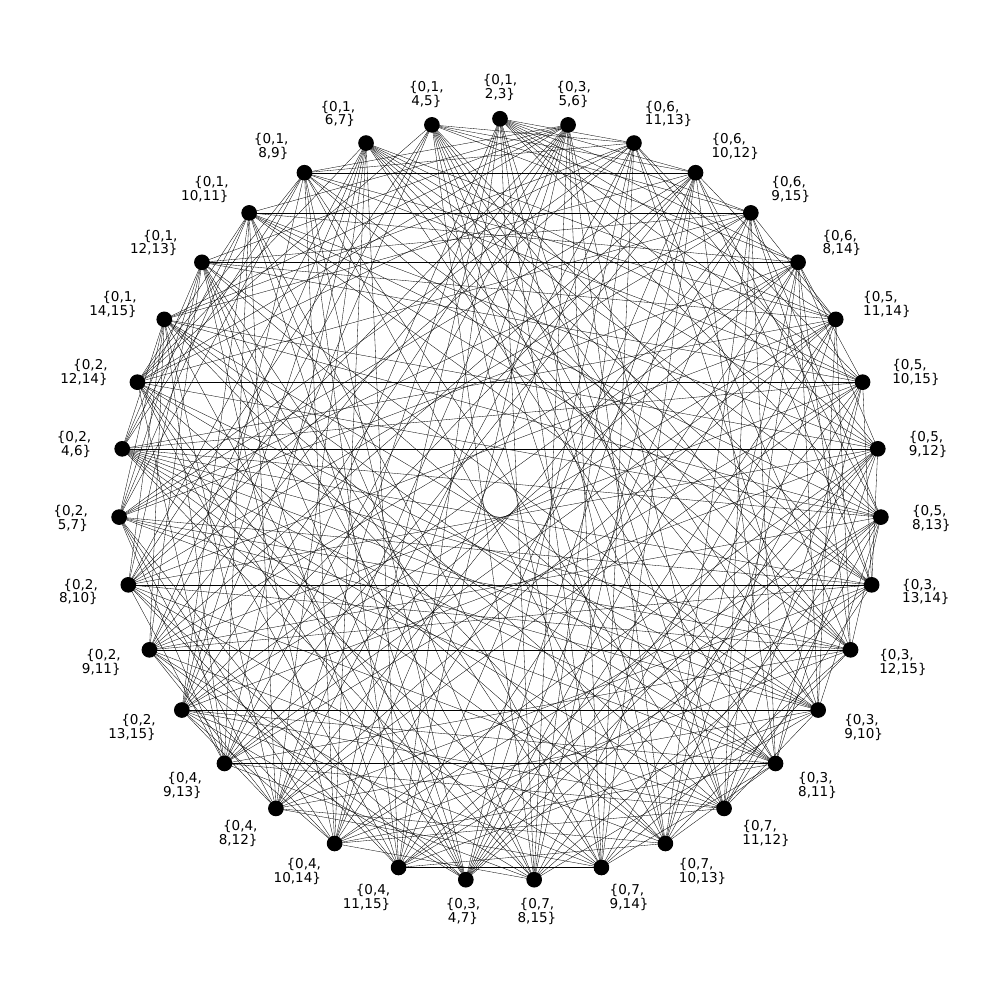}
  \caption{The graph $\operatorname{KG}_2(4,2)=\overline{J_2(4,2)}$, which is both a $q$-Kneser graph and the complement of a Grassmann graph.  Its $35$ vertices are the two-dimensional subspaces of $\mathbb F_2^4$.  We identify each integer $r\in\{0,\ldots,15\}$ with its four-bit binary representation in $\mathbb F_2^4$; each vertex label lists the four elements of the corresponding subspace.  Two vertices are adjacent precisely when the corresponding subspaces intersect only in $0$.  The graph has $280$ edges.}
  \label{fig:q-kneser-grassmann-shared-2-4-2}
\end{figure}

For every subspace $W\leq V$, let
$\mathcal P(W)\coloneqq \{p\leq W \mid \dim p=1\}$.
Recall that the $q$-Kneser graph $\operatorname{KG}_q(n,k)$ has the $k$-dimensional subspaces of $V$ as its vertices, with
$U\sim W$ if and only if $U\cap W=0$.
It is $q^{k^2}\qbinom{n-k}{k}$-regular.

\begin{theorem}
\label{thm:qKneser}
For every $n > 2k \geq 2$, the map $U \mapsto \mathcal{P}(U)$ for $U \in \Gr_q(n,k)$ is a Hoffman-rigid tame representation of $\operatorname{KG}_q(n,k)$.
Consequently, the graph admits a
commutativity gadget and the corresponding $\qCSP$ is $\RE$-complete.
\end{theorem}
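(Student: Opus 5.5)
We follow the proof of \Cref{prop:kneser-criterion} verbatim, with Johnson-scheme facts replaced by their Grassmann analogues. The feature set is $P=\mathcal P(V)$, the set of one-dimensional subspaces of $V$, and we take $r=[k]_q$. The map $U\mapsto\mathcal P(U)$ is injective, since $U$ is spanned by its lines, and it is a homomorphism into the Kneser graph, since $U\cap W=0$ exactly when $\mathcal P(U)\cap\mathcal P(W)=\emptyset$. Every line lies in some $k$-space, so there are no unused features. For a line $p$ the feature function is $\chi_p(U)=\indfunction{p\leq U}$, so $\incspace=\mathcal W_1$. By \Cref{lem:grassmann-modules}, $\incspace=E_0\C^X\oplus E_1\C^X$ and the $\chi_p$ are linearly independent. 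This makes the representation scheme-compatible with index set $\{1\}$.

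For Hoffman-rigidity we must check that the least eigenvalue of $\operatorname{KG}_q(n,k)$ is attained only on $E_1\C^X$. We would quote the $q$-analogue of \cite[Theorem~6.5.2]{godsil2016erdos} from \cite[\S9]{godsil2016erdos}: the eigenvalue of the maximum-distance relation $A_k$ on $E_j\C^X$ is $\lambda_j=(-1)^jq^{\binom j2+k(k-j)}\qbinom{n-k-j}{k-j}$. When $n>2k$ these satisfy $\lambda_0=q^{k^2}\qbinom{n-k}{k}>|\lambda_1|>|\lambda_2|>\dots$, which is the Frankl--Wilson/Hoffman computation for EKR in vector spaces. We would verify the monotonicity of $|\lambda_j|$ by a short ratio estimate. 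Then $\tau=\lambda_1$ is attained only on $E_1$, the degree is a simple eigenvalue and $-\lambda_0$ is not an eigenvalue. So $\operatorname{KG}_q(n,k)$ is connected and non-bipartite, and \Cref{prop:hoffman-implies-delsarte} followed by \Cref{lem:delsarte-iff-schrijver} yields Schrijver-rigidity. This eigenvalue bookkeeping is the step I expect to cost the most care. If the closed formula is awkward, a fallback is Remark-style reasoning: compute $A_k\chi_p$ directly by counting the $k$-spaces disjoint from $U$ that contain $p$, which shows that $\mathcal W_1\cap\1^\perp$ is an eigenspace with eigenvalue $-q^{k^2-k}\qbinom{n-k-1}{k-1}$. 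Minimality of that eigenvalue would still have to be checked against the cited spectrum.

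For tameness we compute the edge kernel as in the Kneser case. Write $F\in\incspace\otimes\incspace$ as $F=\sum_{p,p'}m_{pp'}\chi_p\otimes\chi_{p'}$, which is unique by linear independence. Then $F(U,W)=\sum_{p\leq U,\,p'\leq W}m_{pp'}$. We need a $q$-analogue of \Cref{lem:kneser-cross-sums}: if these cross sums vanish for all pairs of complementary-free $k$-spaces with $U\cap W=0$, then $M=(m_{pp'})$ is diagonal.

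To prove it, first fix $U$ and set $u_{p'}=\sum_{p\leq U}m_{pp'}$. We show that a function on lines summing to zero over $\mathcal P(W)$ for every $k$-space $W$ with $W\cap U=0$ must vanish on lines outside $U$, and this is where $n>2k$ enters. For two lines $p',p''\not\leq U$, choose $(k-1)$-spaces $S$ with $S+p'$ and $S+p''$ both meeting $U$ trivially. The difference of the two sums then involves only the lines of $S+p'$ and $S+p''$ outside $S$. Averaging over $S$, or using that the incidence matrix between lines and $k$-spaces avoiding $U$ has full column rank on lines avoiding $U$ (a transitivity/eigenvalue argument), gives $u_{p'}=0$. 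Second, fix a line $p'$ and apply the same claim to $u_p=m_{pp'}$ for $p\neq p'$, with $U$ replaced by $p'$. This yields $m_{pp'}=0$ for $p\neq p'$.

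The resulting full-rank claim is the second genuine obstacle. I would first try to prove it by the symmetric-design argument: the lines-versus-$k$-spaces incidence restricted to the complement of $U$ is an orbit matrix under the stabiliser of $U$, and its Gram matrix is an invertible combination of $I$ and adjacency-type matrices. With $\edgekernel=\Span\{\chi_p\otimes\chi_p\}$ in hand, \Cref{lem:diagonal-feature-rigidity} gives tameness, since every vertex has at least two neighbours. Finally, \Cref{thm:criterion} concludes the argument.
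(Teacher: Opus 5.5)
Your proposal follows the paper's proof essentially step by step: the same line-feature representation, Hoffman-rigidity read off from the cited $q$-Kneser spectrum, the same two-step reduction of the edge kernel to full column rank of the incidence matrix between lines outside $Z$ and $k$-spaces meeting $Z$ trivially (used once with $Z=U$ and once with $Z$ a line), and tameness via \Cref{lem:diagonal-feature-rigidity}. The one step you assert without proof is exactly what \Cref{lem:q-disjoint-incidence} supplies: it carries out your Gram-matrix plan explicitly, obtaining $C^*C=aI+b(J-I_u\otimes J_h)$ with critical eigenvalue $a-bh=h^{t-1}q^{t-1}\qbinom{m-2}{t-1}$, which is positive precisely because $m=n-\dim Z>t$. (Your exchange-and-average alternative modelled on \Cref{lem:kneser-cross-sums} does not transfer directly, since $\mathcal P(S+p')\setminus\mathcal P(S)$ contains $q^{k-1}$ lines rather than one.)
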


\noindent The proof of this theorem uses the following lemma to compute the edge kernel of the representation.

\begin{lemma}
\label{lem:q-disjoint-incidence}
Let $Z\leq V$ have dimension $s$, let $t\geq1$, and suppose that
$n>s+t$.  Form the zero-one matrix $C$ whose columns are indexed by the
points $p\in\mathcal P(V)$ not contained in $Z$, whose rows are indexed by
the $t$-subspaces $W\leq V$ satisfying $W\cap Z=0$, and whose $(W,p)$-entry
is $1$ precisely when $p\leq W$.  Then $C$ has full column rank over
$\mathbb C$.
\end{lemma}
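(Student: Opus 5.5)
Full column rank means $C^\top C$ is nonsingular, so I plan to compute the Gram matrix $C^\top C$, indexed by points outside $Z$, and show it is positive definite. For a point $p\not\le Z$, the diagonal entry $(C^\top C)_{pp}$ counts $t$-subspaces $W$ with $p\le W$ and $W\cap Z=0$. For distinct points $p,p'$ outside $Z$, the off-diagonal entry counts $t$-subspaces $W\ge p+p'$ with $W\cap Z=0$. This count is zero if $(p+p')\cap Z\neq0$, i.e.\ if the line $p+p'$ meets $Z$. Otherwise it is a constant $\beta$ independent of the pair.

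The constant claim follows by working in the quotient: such $W$ correspond to $(t-2)$-subspaces of $V/(p+p')$ meeting the image of $Z$ trivially. The count of $m$-subspaces meeting a fixed $s$-subspace trivially in an $N$-dimensional space is the standard value $q^{ms}\qbinom{N-s}{m}$. The same argument gives the diagonal entry $\alpha=q^{(t-1)s}\qbinom{n-1-s}{t-1}$ and $\beta=q^{(t-2)s}\qbinom{n-2-s}{t-2}$.

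So $C^\top C=\alpha I+\beta A$, where $A$ is the adjacency matrix of the graph on points outside $Z$ in which $p\sim p'$ iff the line $p+p'$ misses $Z$. It therefore suffices to show $\alpha+\beta\lambda>0$ for every eigenvalue $\lambda$ of $A$. The clean way to handle the eigenvalues is via the complement $\bar A$ (pairs whose line meets $Z$). Then $J=I+A+\bar A$ on the point set, so
\[C^\top C=(\alpha-\beta)I+\beta J-\beta\bar A.\]
The relation in $\bar A$ is: $p\ne p'$ and $p'\le p+Z$. Equivalently, $p$ and $p'$ have the same image in $V/Z$, giving a point of $V/Z$ with $\dim(p+Z)=s+1$. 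Hence $\bar A+I$ is block diagonal, with one all-ones block per point of $V/Z$, each block of size $q^s$ (the points of $p+Z$ not in $Z$).

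It follows that $\bar A$ has eigenvalues $q^s-1$ and $-1$. Since $J$ is PSD, it suffices to show $\alpha-\beta-\beta(q^s-1)=\alpha-q^s\beta>0$. When $t=1$, $\beta=0$ and $\alpha=1$, so this is immediate. For $t\ge2$ it becomes
\[q^{(t-1)s}\qbinom{n-1-s}{t-1}>q^{(t-1)s}\qbinom{n-2-s}{t-2},\]
which holds because $\qbinom{a+1}{b+1}>\qbinom{a}{b}$ whenever $a\ge b\ge0$. Here $a=n-2-s\ge t-1>t-2$, using $n>s+t$.

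The main obstacle I expect is getting the counting formula and the block structure exactly right; the inequality itself is mild. If the counting formula is shaky, I would instead argue positivity directly: $\alpha-q^s\beta$ counts the excess of $t$-spaces through $p$ over those through $p+p''$ summed appropriately. The hypothesis $n>s+t$ is exactly what ensures that $t$-subspaces avoiding $Z$ through a given point exist, so that $\alpha>0$.
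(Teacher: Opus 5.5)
Your proof is correct and is essentially the paper's argument. Both compute the Gram matrix and find the entries $\alpha=q^{(t-1)s}\qbinom{n-1-s}{t-1}$ on the diagonal, $0$ for distinct points with the same image in $V/Z$, and $\beta=q^{(t-2)s}\qbinom{n-2-s}{t-2}$ otherwise. Both recognise $C^\top C=\alpha I+\beta(J-D)$, where $D$ is block diagonal with one all-ones $q^s\times q^s$ block per point of $V/Z$, and then reduce to $\alpha-q^s\beta>0$ via $q$-Pascal.

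The differences are cosmetic. You count via quotients and the standard formula $q^{ms}\qbinom{N-s}{m}$, whereas the paper parametrises $W$ as graphs of linear maps $A\to Z$ with $A\le Y$. You also use $\beta J\succeq 0$ where the paper computes the full spectrum.

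One small slip: $\qbinom{a+1}{b+1}>\qbinom{a}{b}$ requires $a>b$, since both sides equal $1$ when $a=b$. You apply it with $a\ge t-1=b+1$, so the argument is unaffected.
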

\begin{proof}
Choose a linear complement $Y$ of $Z$, and put $m=\dim Y=n-s>t$.
For each $\ell\in\mathcal P(Y)$, fix $0\ne y_\ell\in\ell$.
Each of the $|\mathcal P(Y)| \cdot |Z|$ points outside $Z$ has a unique expression
\[
    p_{\ell,z} = \Span\{y_\ell + z \}
\]
for each of the $\ell \in \mathcal{P}(Y)$ and $z \in Z$.
If $t=1$, its
rows and columns are both indexed by the points outside $Z$, and thus
$C$ is the identity matrix. Assume henceforth that $t\geq2$.

Every $t$-subspace $W$ of $V$ disjoint from $Z$ is the graph of a linear map
$L\colon A\to Z$, where $A\leq Y$ has dimension $t$ and $W \coloneqq \{a + L(a) \mid a \in A\}$.
Now, with the definition of $C$, construct the Gram matrix $C^*C$, where
\[
    C^*C_{p_{\ell,z}, p_{\ell',z'}} = \sum_{W} \overline{C_{W,p_{\ell,z}}}C_{W,p_{\ell',z'}}.
\]
Thus, the
$(p_{\ell,z},p_{\ell',z'})$-entry of $C^*C$ counts pairs
$(A,L)$ such that
\[
 \ell,\ell'\leq A,\qquad L(y_\ell)=z,\qquad
 L(y_{\ell'})=z'.
\]
Put $h=|Z|=q^s$ and $u=|\mathcal P(Y)|=[m]_q$.  There are three possibilities:

\begin{itemize}
    \item If $(\ell,z)=(\ell',z')$, the entry is on the diagonal of $C^*C$.
    Thus, only the one-dimensional subspace $\ell\le A$ is fixed, which gives entry
    \[
       a=h^{t-1}\qbinom{m-1}{t-1}.
    \]
    \item If $\ell=\ell'$ but $z\ne z'$, computing the entry would require $L(y_\ell) = z$
    and $L(y_\ell) = z'$ for some $y_\ell$. This is impossible, thus the entry is $0$.
    \item If $\ell\ne\ell'$, then $y_\ell$ and $y_{\ell'}$ are linearly independent. Hence, the entry is
    \[
       b=h^{t-2}\qbinom{m-2}{t-2}.
    \]
\end{itemize}
These numbers are obtained by first choosing $A$ and then
counting linear maps $L\colon A\to Z$ with respectively one or two
prescribed values.
Grouping the entries of $C^*C$ into blocks according to $\ell, \ell'\in\mathcal P(Y)$, we address two cases:

\begin{itemize}
    \item If $\ell = \ell'$, then according to the first two cases above, the entry is $a$ if $z = z'$ (the diagonal entries) and $0$ otherwise. Thus, in this case, we obtain the block matrix $aI_h$.
    \item If $\ell\ne\ell'$, then according to case 3 from the above listing, all entries are $b$. Thus, we obtain the all-$b$ matrix $bJ_h$.
\end{itemize}

\noindent Altogether, we have

\[
                  C^*C=aI+b\bigl(J-I_u\otimes J_h\bigr),
\]

\noindent where $I$ and $J$ are the $uh \times uh$ identity and all-ones matrices, respectively.
We now compute the spectrum of $C^*C$. Note first that $J$ and $I_u\otimes J_h$ commute, so the spectrum of $J-I_u\otimes J_h$ is $$\{h(u-1),(-h)^{(u-1)},0^{(u(h-1))}\}$$
(where the exponent denotes the multiplicity). Thus, the spectrum of $C^*C$ is
\[
\{a+bh(u-1),(a-bh)^{(u-1)},a^{(u(h-1))}\}.
\]
The positivity of the first and third types of eigenvalues follows immediately, while
\[
\begin{aligned}
 a-bh
 &=h^{t-1}\qbinom{m-1}{t-1} - h^{t-2}\qbinom{m-2}{t-2}h \\
 &=h^{t-1}\left(
     \qbinom{m-1}{t-1}-\qbinom{m-2}{t-2}\right) \\
 &=h^{t-1}q^{t-1}\qbinom{m-2}{t-1}>0,
\end{aligned}
\]
where the inequality holds because of $m>t$.  Hence, $C^*C$ is positive definite, so $C$ has
full column rank.
\end{proof}

\begin{proof}[Proof of Theorem~\ref{thm:qKneser}]
Put $X=\Gr_q(n,k)$ and $P=\mathcal P(V)$.  For $p\in P$ set
\[
                  S_p=\{U\in X \mid p\leq U\},
                  \qquad \chi_p(U)=\indfunction{U\in S_p}.
\]
Each $S_p$ is a nonempty independent set, and every $U\in X$ belongs to exactly
$[k]_q$ of these independent sets.  Thus
\begin{equation}\label{eq:qk4}
    \sum_{p\in P}\chi_p=[k]_q\mathbf1.
\end{equation}
Moreover, the feature set of $U$ is exactly its set of one-dimensional
subspaces, so the feature map is injective.

We next verify the spectral condition.  
By~\cite[\S~9.7.1] {godsil2016erdos}, the eigenvalues of ${KG}_q(n,k)$ are
\begin{equation}\label{eq:q-kneser-eigenvalues}
    \lambda_j=(-1)^j q^{\binom j2+k(k-j)}
                  \qbinom{n-k-j}{k-j},
\end{equation}
with corresponding eigenspaces
\begin{equation}\label{eq:q-kneser-eigenspaces}
    \mathcal E_j=\mathcal{W}_j\cap \mathcal{W}_{j-1}^\perp.
\end{equation}
As all the eigenvalues $\lambda_j$ are distinct, they are the primitive modules $E_i\mathbb C^X$ of the Grassmann scheme.
Furthermore, \cref{eq:q-kneser-eigenvalues} shows that $\lambda_1$ is the least eigenvalue, with \cref{eq:q-kneser-eigenspaces} giving its corresponding eigenspace $\mathcal E_\tau=\mathcal{W}_1\cap \mathcal{W}_0^\perp$.

Since $\mathcal W_1=\operatorname{span}\{\chi_p \mid p\in P\}$ and~\cref{eq:qk4}
puts $\mathbf1$ in $\mathcal W_1$, \Cref{lem:grassmann-modules} and~\cref{eq:q-kneser-eigenspaces}
show that
\[
       \incspace\coloneqq \operatorname{span}\{\chi_p \mid p\in P\}
             =\mathbb C\mathbf1\oplus\mathcal E_\tau.
\]
Thus the feature map is a Hoffman-rigid representation.  Since the graph is
connected and non-bipartite, \Cref{prop:hoffman-implies-delsarte} makes
it a Delsarte-rigid representation.
Lemma~\ref{lem:grassmann-modules} also shows
that the functions $(\chi_p)_{p\in P}$ are linearly independent, as they form a basis.

We now determine the edge kernel.  Because the $\chi_p$
are linearly independent, every $F\in\incspace\otimes\incspace$ has a
unique expression
\begin{equation}\label{eq:qk12}
    F(U,W)=\sum_{p,r\in P}c_{p,r}\chi_p(U)\chi_r(W).
\end{equation}
Suppose that $F(U,W)=0$ whenever $U\cap W=0$.  Fix $U$ and define, for
$r\not\leq U$,
\[
                         f_U(r)=\sum_{p\leq U}c_{p,r}.
\]
For every $k$-subspace $W$ disjoint from $U$,
\cref{eq:qk12} says
\[
                         \sum_{r\leq W}f_U(r)=0.
\]
Apply Lemma~\ref{lem:q-disjoint-incidence} with $Z=U$ and $t=k$; this is possible as the hypothesis is that $n>2k$.
We obtain
$\sum_{p\leq U}c_{p,r}=0$ whenever $r\not\leq U$.
Now fix $r$.  Applying the same lemma with $Z=r$ and $t=k$
to the above gives
$c_{p,r}=0$ for $p\ne r$.
Consequently,
\begin{equation}\label{eq:qk15}
    \edgekernel
    =\operatorname{span}\{\chi_p\otimes\chi_p \mid p\in P\}.
\end{equation}
One inclusion is immediate: if $U\cap W=0$,
no point
$p$ is contained in both $U$ and $W$.

For every $Z\in X$, a complement of $Z$ has
dimension $n-k>k$ and therefore contains two distinct $k$-subspaces,
both of which are neighbours of $Z$.  Hence
\Cref{lem:diagonal-feature-rigidity} applies, which proves tameness
and gives
\[
 \diagonalSubspace=\mathbb C\omega_T,
 \qquad
 \omega_T(U,W)=\sum_{p\in P}\chi_p(U)\chi_p(W)
               =[\dim(U\cap W)]_q.
\]
By~\Cref{lem:delsarte-iff-schrijver}, the Delsarte-rigid representation is
Schrijver-rigid, so Theorem~\ref{thm:criterion} completes the
proof.
\end{proof}

\begin{remark}\label{rem:q-kneser}
The strict inequality $n > 2k$ is essential to this proof when $k\geq2$.
If $n=2k$, $\dim Z=k$, and $k\geq2$, then $\dim Y=t=k$, so in~\Cref{lem:q-disjoint-incidence}, the last inequality used to prove that the eigenvalue $a - bh$ is positive does not hold.
Thus, the full-rank conclusion, and hence~\Cref{eq:qk15}, does not follow by this method.
\end{remark}

\subsubsection{Complements of Grassmann graphs}
\label{subsec:grassmann-complement}
Let $J_q(n,k)$ be the Grassmann graph on the $k$-subspaces of
$V$, where two
vertices are adjacent when their intersection has dimension $k-1$.

\begin{proposition}
\label{prop:grassmann-complement}
If $n>2k\geq4$, then $\overline{J_q(n,k)}$ has a Delsarte-rigid
tame representation.  Consequently, the graph admits a
commutativity gadget and the corresponding $\qCSP$ is $\RE$-complete.
\end{proposition}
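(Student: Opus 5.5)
We follow the proof of \Cref{prop:johnson-complement}, replacing binomial coefficients by Gaussian ones. The representation is $T\colon U\mapsto T_U=\{R\leq U\mid \dim R=k-1\}$, with feature set $P=\Gr_q(n,k-1)$ and $r=[k]_q$. Two $k$-subspaces are adjacent in $\overline{J_q(n,k)}$ exactly when $\dim(U\cap W)\leq k-2$, that is, when they share no $(k-1)$-subspace. Hence $T$ is a homomorphism to the appropriate Kneser graph. It is injective because $U$ is the span of its hyperplanes (here $k\geq2$). The incidence space is $\mathcal W_{k-1}$, so by \Cref{lem:grassmann-modules} the representation is scheme-compatible with index set $\Lambda=\{1,\dots,k-1\}$, and the $\chi_R$ are linearly independent.

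\textbf{Delsarte rigidity.} We apply \Cref{prop:bose-mesner-spectral-test} with
\[
B=(\kappa+1)E_0+(\theta+1)E_k-I.
\]
Write $E_k=\frac1v\sum_i Q_{ik}A_i$, where $Q_{ik}=\frac{v\,m_k}{|R_i|}P_{ki}$ as in \cref{eq: Q_{i,j}}, with $m_k=\qbinom nk-\qbinom n{k-1}$. We need the $q$-analogue of $P_{ki}$. The eigenvalue of the distance-$i$ relation on the last module of the Grassmann scheme is $(-1)^iq^{\binom i2}\qbinom ki$, by \cite[\S9]{godsil2016erdos} or the dual $q$-Hahn formulas. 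Together with $|R_i|=v\,q^{i^2}\qbinom ki\qbinom{N}{i}$, where $N=n-k$, this gives
\[
Q_{ik}=(-1)^i\,m_k\,q^{\binom i2-i^2}\Big/\qbinom Ni.
\]
Imposing $c_0=c_1=0$ gives two linear equations in $\kappa+1$ and $\theta+1$:
\[
\kappa+1+(\theta+1)m_k=v,\qquad \kappa+1=(\theta+1)m_k/(q[N]_q).
\]
Solving yields $\kappa+1=v/(1+q[N]_q)$, which equals $1/\mu$ as it should, since $|S_R|=[N+1]_q=1+q[N]_q$. We then check $\kappa>1$ and $-1<\theta<\kappa$; both follow from $n>2k\geq4$ by the same crude estimates as in the Johnson case, since $v/[N+1]_q$ grows like $q^{(k-1)N}$.

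\textbf{Tameness.} We use \Cref{lem:local-incidence-edge-kernel} and \Cref{lem:diagonal-feature-rigidity}. Conditions $(ii)$ and $(iii)$ are immediate: $|S_R|=[N+1]_q\geq3$, and two distinct $k$-spaces containing $R$ meet exactly in $R$. The main work is condition $(i)$. Fix $Y$ and let $C_Y=\{Y\}\cup\{W:\dim(W\cap Y)=k-1\}$. As in the Johnson case, the space of functions in $\incspace=\ker E_k$ supported on $C_Y$ equals $\ker M$, where
\[
M=I+\frac{Q_{1k}}{m_k}A_{1,Y}+\frac{Q_{2k}}{m_k}A_{2,Y}.
\]
The neighbours $W$ of $Y$ in the Grassmann graph are parametrised by pairs $(H,p)$, where $H$ is a hyperplane of $Y$ and $p$ is a point outside $Y$ modulo $Y$; concretely $W=H+p$, with multiplicities coming from the choice of $p$ within $W\setminus H$. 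Two such spaces $W,W'$ meet in dimension $k-1$ when $H=H'$, or when $W\cap Y\neq W'\cap Y$ but $W+Y=W'+Y$ and the two spaces meet suitably. This incidence structure is more intricate than the row/column grid of the Johnson case. Writing $f(W)=u_{H,\cdot}$ and summing $Mf=0$ over suitable classes, we aim to show that $f$ depends only on $H=W\cap Y$ and that $f(Y)=\sum_H f_H$. That gives $\dim\ker M\leq\qbinom k{k-1}$. The reverse inclusion, $\Span\{\chi_R:R\leq Y\}\subseteq\ker M$, holds by the support argument already used, and the $\chi_R$ are independent. Then $\edgekernel=\Span\{\chi_R\otimes\chi_R\}$, and since every vertex has at least two neighbours, \Cref{lem:diagonal-feature-rigidity} gives tameness.

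The graph contains $\KG_q(n,k)$ as a spanning subgraph, so it is connected and non-bipartite. \Cref{lem:delsarte-iff-schrijver} and \Cref{thm:criterion} then finish the proof.

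The main obstacle is the kernel computation for $M$. It requires the exact $q$-intersection numbers among neighbours of $Y$: for $W\sim W'$ in $C_Y$, one must count how many neighbours share $H$, share $W+Y$, or neither. If the direct elimination gets messy, I would first try an alternative. Decompose $\C^{C_Y}$ under the stabiliser of $Y$, which acts via $GL(Y)\times GL(V/Y)$ together with translations $\operatorname{Hom}$. Then check that $M$ is invertible on every isotypic component except the one carrying the functions depending only on $H$.
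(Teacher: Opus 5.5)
Your set-up and your Delsarte-rigidity computation agree with the paper: the representation, $Q_{ik}$, the two linear conditions and $\kappa+1=v/[N+1]_q$ are the same. The inequalities become immediate from the closed form $\theta=q^{1-k}[k-1]_q/[N-k+1]_q\in(0,1)$. The genuine gap is in tameness, namely condition $(i)$ of \Cref{lem:local-incidence-edge-kernel}, i.e.\ the bound $\dim\ker M\le[k]_q$. You only state this as an aim. You also leave the incidence structure among the Grassmann neighbours of $Y$ undetermined (``the two spaces meet suitably''), so the equations $Mf=0$ cannot even be written down. The isotypic-decomposition fallback is likewise only a plan. Your parametrisation by pairs $(H,p)$ ``with multiplicities'' misses the feature the computation must handle: each pair $(W\cap Y,\,W+Y)$ carries exactly $q$ neighbours. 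The structure is not genuinely more intricate than the Johnson grid.

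The missing idea is an explicit coordinatisation. Fix $C$ with $V=Y\oplus C$ and, for each hyperplane $R$ of $Y$, a vector $e_R$ with $Y=R\oplus\Span\{e_R\}$. Every Grassmann neighbour of $Y$ is then uniquely $Y_{R,p,\alpha}=R\oplus\Span\{x_p+\alpha e_R\}$, with $p=\Span\{x_p\}$ a point of $C$ and $\alpha\in\mathbb F_q$. Two distinct such spaces meet in dimension $k-1$ if and only if $R=R'$ or $p=p'$, with no extra condition. If $p=p'$ and $R\neq R'$, then $R+R'=Y$ lets you write $\alpha'e_{R'}-\alpha e_R=r-r'$ with $r\in R$, $r'\in R'$, so $x_p+\alpha e_R+r$ is a common vector outside $Y$. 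If $R\neq R'$ and $p\neq p'$, any common vector has zero $C$-component, so the intersection is $R\cap R'$. Hence $C_Y\setminus\{Y\}$ is the $[k]_q\times[N]_q$ row/column grid of the Johnson case with every cell replaced by a $q$-clique. Subtracting $(Mf)(Y_{R,p,\beta})=0$ from $(Mf)(Y_{R,p,\alpha})=0$ gives $(u_{R,p,\alpha}-u_{R,p,\beta})\bigl(1+1/(q[N]_q)\bigr)=0$, so $f$ is constant on cells. Let $w_{R,p}=\sum_\alpha u_{R,p,\alpha}$, with $r_R,c_p,s$ its row, column and total sums. Using $f(Y)=s/(q[N]_q)$, the remaining equations reduce to $[N]_q w_{R,p}=r_R+c_p-s/[N]_q$. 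This is exactly the Johnson system with $N,k$ replaced by $[N]_q,[k]_q$. Summing over $R$ and using $[N]_q\neq[k]_q$ gives $c_p=s/[N]_q$. So $f(Y_{R,p,\alpha})$ depends only on $R$, and $f(Y)$ is the sum of these $[k]_q$ values, which is the bound $\dim\ker M\le[k]_q$ you need.
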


\begin{proof}
Put $N=n-k$, $X=\Gr_q(n,k)$, and $v=\qbinom nk$.
Similarly as in the case of the complement of the Johnson graph, we use the
$(k-1)$-subspaces of $\F_q^n$ as features and let $
 S_R=\{A\in X \mid R\leq A\}$ and $T_A=\{R\leq A \mid \dim R=k-1\}$.
This is an injective disjointness representation, because two
adjacent vertices of $\overline{J_q(n,k)}$ have no common $(k-1)$-subspace, and a
$k$-space is determined by its hyperplanes.  Moreover,
\[
 |T_A|=[k]_q,\qquad
 \mu=\frac{[k]_q}{\qbinom n{k-1}}
     =\frac{[N+1]_q}{v}.
\]
This, together with~\Cref{lem:grassmann-modules}, gives
\begin{equation}\label{eq:cg3}
    \incspace
       =\operatorname{span}\{\chi_R \mid R \leq V, \dim R=k-1\}
       =E_0\mathbb C^X\oplus\cdots\oplus E_{k-1}\mathbb C^X.
\end{equation}
Thus, $T$ is scheme-compatible with index set $\{1,\dots,k-1\}$.
Additionally, the feature functions are linearly independent.

We now build the matrix $B \in \mathbb{C}[\mathfrak{X}]$ in order to prove Delsarte-rigidity using~\Cref{prop:bose-mesner-spectral-test}.
Put
\[
 m_k=\operatorname{rank}(E_k)=v-\qbinom n{k-1}
\]
as well as
\[
B= \kappa E_0 - \sum_{i=1}^{k-1}E_i + \theta E_k,
\]
where we want to determine values for $\kappa>1$ and $-1<\theta<\kappa$
so that  $B\in\Span\{A_i\mid i\in\Gamma\}$.
As we know $\sum_{i=0}^k E_i = I$, this can be rewritten as
\begin{equation}\label{eq:grassmann-b}
    B = (\kappa + 1)E_0 + (\theta + 1)E_k - I.
\end{equation}

We will now compute the change of basis from $(E_j)_j$ to $(A_i)_i$ in order to determine $\kappa$ and $\theta$. Using $j = k$ with \cite[Theorem~9.6.3]{godsil2016erdos} gives $P_{ki} = (-1)^iq^{\binom{i}{2}}\qbinom{k}{i}$, and from~\cite[Corollary~9.3.3]{godsil2016erdos} we see that $|R_i|=vq^{i^2}\qbinom{k}{i}\qbinom{N}{i}$. By substituting these values into~\ref{eq: Q_{i,j}} (which holds for arbitrary schemes), we get

\begin{equation}\label{eq:grassmann-q}
  Q_{ik} = (-1)^i \frac{m_k}{q^{\binom{i+1}{2}}\qbinom{N}{i}}.  
\end{equation}
We can now compute $B$ with a change of basis:
\[
B = \frac{(\kappa + 1)}{v}\sum_{i=0}^kA_i + \frac{(\theta + 1)}{v}\sum_{i=0}^kQ_{ik}A_i - A_0.
\]
Substituting $i = 0$ and $i = 1$ into~\cref{eq:grassmann-q} yields
\[
Q_{0k} = m_k,\qquad Q_{1k} = -\frac{m_k}{q[N]_q}.
\]
Furthermore, we get the following coefficients corresponding to each substitution:
\[
c_0 = \frac{\kappa + 1 + (\theta + 1)m_k}{v} - 1,\qquad c_1 = \frac{\kappa + 1 - (\theta + 1)(m_k/(q[N]_q))}{v}.
\]
Since $B \in \Span\{A_2,\dots,A_k\}$, it follows that $c_0 = c_1 = 0$. This gives
\[
\kappa + 1 + (\theta + 1)m_k = v,\qquad \frac{(\theta + 1)m_k}{q[N]_q} = \kappa + 1.
\]
Solving for $(\kappa + 1)$ and $(\theta + 1)$ and then substituting into~\Cref{eq:grassmann-b}, we get
\[
    B=\frac v{[N+1]_q}E_0+
     q^{1-k}\frac{[N]_q}{[N-k+1]_q}E_k-I.
\]
By construction, $B \in \Span\{A_2,\dots,A_k\}$ and its eigenvalues on $E_0$, $E_1 \oplus \cdots \oplus E_{k-1}$ and $E_k$ respectively are
\[
\kappa = \frac{v}{[N + 1]_q} - 1,\qquad -1,\qquad \theta = q^{1-k}\frac{[k-1]_q}{[N - k + 1]_q}.
\]
For $\kappa + 1$, this yields
\[
\kappa + 1  = \frac{v}{[N + 1]_q}
            = \frac{\qbinom{n}{k}}{[N + 1]_q}
            = \frac{1}{[N + 1]_q}\qbinom{n}{k}
            = \frac{1}{[N + 1]_q}\frac{[N + k]_q\cdots[N + 1]_q}{[k]_q!}
            =\prod_{i=2}^k\frac{q^{N+i}-1}{q^i-1}>2.
\]
It follows that, when we consider $\theta$, we get:
\[
0< q^{1-k}\frac{[k-1]_q}{[N-k+1]_q}
   <q^{1-k}[k-1]_q<1<\kappa.
\]
Thus, $\kappa>1$ and $0<\theta<\kappa$. Together with~\Cref{eq:cg3}, this verifies~\Cref{eqn_1611_2508} for $\Lambda=\{1,\ldots,k-1\}$, so
\Cref{prop:bose-mesner-spectral-test} proves that the representation $T_A$ is
Delsarte-rigid.

In order to complete the proof we still need to show that the representation is tame. For this, we want to apply~\Cref{lem:local-incidence-edge-kernel}, so we should investigate those functions supported on
\[C_Y=X\setminus N_{\overline{J_q(n,k)}}(Y)=\{A\in X\mid \dim(A\cap Y)\ge k-1\}.\]
Since two distinct $k$-subspaces cannot intersect in dimension greater than $k-1$, we have
\[C_Y=\{Y\}\cup N_{J_q(n,k)}(Y).\]
We now specify an arbitrary vertex in $N_{J_q(n,k)}(Y)$. Fix a subspace $C\le V$ such that
$V=Y\oplus C$.
Consider any $R\le Y$ with $\dim(R)=k-1$, any point $p=\Span\{x_p\}$ of $C$, and an element $\alpha\in\mathbb F_q$. If we fix a representative $e_R$ such that
$Y=R\oplus\Span\{e_R\}$,
then the subspaces
\[Y_{R,p,\alpha}=R\oplus\Span\{ x_p+\alpha e_R\}\]
are exactly the Grassmann neighbours of $Y$. In particular, $|C_Y\setminus\{Y\}|=[k]_qq[N]_q$.

Next we consider distinct vertices $A=Y_{R,p,\alpha}$ and  $ A'=Y_{R',p',\alpha'}$ in $N_{J_q(n,k)}(Y)$ and determine their intersection sizes.
If $R=R'$, then $R\le A\cap A'$. Since $A$ and $A'$ are distinct $k$-subspaces, their intersection has dimension $k-1$ and so $A\sim A'$ in $J_q(n,k)$.
Suppose now that $p=p'$ and $R\neq R'$. Then $\dim(R\cap R')=k-2$ and $R+R'=Y$. Write
    \[ A=R\oplus\Span\{ x_p+\alpha e_R\}, \qquad A'=R'\oplus\Span\{ x_p+\alpha'e_{R'}\}.\]    
Since $\alpha'e_{R'}-\alpha e_R\in Y=R+R'$, there exist $r\in R$ and $r'\in R'$ such that $\alpha'e_{R'}-\alpha e_R=r-r'$. We can therefore define the vector $ w=x_p+\alpha e_R+r=x_p+\alpha'e_{R'}+r'$.
Then  $w\in A\cap A'$, since $r\in R\le A$ and $r'\in R'\le A'$. Moreover, $w\notin Y$, because its $C$-component is the nonzero vector $x_p$. Hence,
$(A\cap A')\supseteq(R\cap R')$.
The right-hand side has dimension $k-2$ and, since $A\neq A'$, we also have $\dim(A\cap A')= k-1$, so $A\sim A'$ in $J_q(n,k)$.

Finally, suppose that $R\neq R'$ and $p\neq p'$. If $w\in A\cap A'$, then the $C$-component of $w$ lies in $p\cap p'=0$, so $w\in Y$. Consequently,
\[A\cap A'=(A\cap Y)\cap(A'\cap Y)=R\cap R',\]
and hence $\dim(A\cap A')=k-2$.
We can therefore deduce that $\dim(A\cap A')=k-1$ if, and only if, $R=R'$ or $p=p'$. In particular, the field elements $\alpha$ and $\alpha'$ have no influence on adjacency. For fixed $R$ and $p$, the $q$ vertices $Y_{R,p,\alpha}$ form a clique.

With this analysis in hand, the rest of the argument is that of~\Cref{prop:johnson-complement}. We prove
\begin{equation}\label{eq:cgek1}
    \{f\in\incspace:\operatorname{supp}(f)\subseteq C_Y\}=\Span\{\chi_R \mid R\leq Y,\ \dim R=k-1\}.
\end{equation}
By~\cref{eq:cg3}, $\incspace=\ker(E_k)$, so a function supported on $C_Y$ lies in $\incspace$ exactly when the restriction to $C_Y$ lies in $\ker E_{k,Y}$. We set 
\[M= \frac{v}{m_k}E_{k,Y}.\]
As before, any two elements of $C_Y$ intersect in dimension at least $k-2$, so we use~\cref{eq:grassmann-q} to deduce that
\begin{equation}\label{eq:cgek2}
    M=I-\frac{1}{q[N]_q}A_{1,Y}+\frac{1}{q^3\qbinom{N}{2}}A_{2,Y}
\end{equation}
where $A_{s,Y}$ denotes the restriction of $A_s$ to $C_Y$.

Let $f\in \ker(M)$ and write $f(Y)=a$ and $f(Y_{R,p,\alpha})=u_{R,p,\alpha}$; we use the following sums in our calculation:
\[w_{R,p}=\sum_{\alpha}u_{R,p,\alpha}, \qquad r_R=\sum_pw_{R,p}, \qquad c_p=\sum_Rw_{R,p}, \qquad s=\sum_{R,p}w_{R,p}.\]
Evaluating ~\cref{eq:cgek2} at $Y$ gives $0=(Mf)(Y)=a-\frac{s}{q[N]_q}$ and thus $a=\frac{s}{q[N]_q}$.

Now fix a subspace $R$ and a point $p$; by the previous analysis, the elements of $C_Y$ such that $\dim(Y_{R,p,\alpha}\cap Y_{R',p',\alpha'})=k-1$ are $Y$ and those vertices such that $R=R'$ or $p=p'$. Thus the sum of neighbours is $a+r_R+c_p-w_{R,p}-u_{R,p,\alpha}$ at intersection-one and $s-r_R-c_p+w_{R,p}$ at intersection-two. 
Thus,
\begin{equation}\label{eq:cgek3}
    0=(Mf)(Y_{R,p,\alpha})=u_{R,p,\alpha}-\frac{a+r_R+c_p-w_{R,p}-u_{R,p,\alpha}}{q[N]_q}+\frac{s-r_R-c_p+w_{R,p}}{q^3\qbinom{N}{2}}.
\end{equation}

Now for the only new step in the proof: subtracting the equations at $Y_{R,p,\alpha}$ and $Y_{R,p,\beta}$ gives $(u_{R,p,\alpha}-u_{R,p,\beta})(1+\frac{1}{q[N]_q})=0$, so $u_{R,p,\alpha}=u_{R,p,\beta}$ for all $\alpha,\beta\in \mathbb F_q$.  We conclude that $u_{R,p,\alpha}$ does not depend on $\alpha$ and, so, $u_{R,p,\alpha}=w_{R,p}/q$ .
Substituting this together with $a = s/(q[N]_q)$ into~\cref{eq:cgek3} and collecting the terms (using $\qbinom{N}{2} = [N]_q [N-1]_q / (q+1)$ and $[N]_q = 1 + q[N-1]_q$ ) gives 

\[0=\frac{q[N]_q + 1}{q^2[N]_q([N]_q - 1)}\left([N]_qw_{R,p} - r_R - c_p + \frac{s}{[N]_q}\right),\]
and the prefactor is nonzero, so $[N]_q w_{R,p} = r_R + c_p - s/[N]_q$.

Summing over the $[k]_q$ spaces $R \leq Y$ gives $[N]_q c_p = s + [k]_q c_p - [k]_q s/[N]_q$; since $N>k$, $c_p = s/[N]_q$. Substituting back yields $u_{R,p,\alpha} = r_R/(q[N]_q) = \varphi_R$, where $\varphi_R$ is some parameter that depends only on $R$. 
In particular, $a = s/(q[N]_q) = \sum_R \varphi_R$, so any $f \in \ker M$ is determined by the $[k]_q$ arbitrary parameters $(\varphi_R)_{R \leq Y}$ and thus $\dim(\ker M)$ is at most $[k]_q$.

We finish by showing that $\ker M$ contains, as a subspace, the $[k]_q$-dimensional space $\Span\{\chi_R \mid R\leq Y,\ \dim R=k-1\}$. Consider any $f$ in the given span and fix $Z \in N_{\overline{J_q(n,k)}}(Y)$. Since $Z\sim Y$ in $\overline{J_q(n,k)}$, $T_Z \cap T_Y = \emptyset$, so $\chi_R(Z) = 0$ for every component $\chi_R$ of $f$. Thus $f(Z) = 0$ for every $Z \in N_{\overline{J_q(n,k)}}(Y)$. Therefore, $f$ is supported on the non-neighbours of $Y$ and thus $f\in \ker M$ (since $f\in \ker E_k$). By~\Cref{lem:grassmann-modules}, the  $[k]_q$ functions $\chi_R$ are linearly independent and thus 

\[\{f\in\incspace:\operatorname{supp}(f)\subseteq C_Y\}=\ker M=\Span\{\chi_R \mid R\leq Y,\ \dim R=k-1\}.\]

The second and third assumptions of~\Cref{lem:local-incidence-edge-kernel} can easily be seen to be true: every $S_R$ has $[N + 1]_q \geq 3$ elements and if $A, A' \in S_R$ are distinct, then $A \cap A' = R$ so, $T_A \cap T_{A'} = \{R\}$. Hence the representation $T$ fulfils the hypotheses of~\Cref{lem:local-incidence-edge-kernel}. By said lemma, $K = \Span\{\chi_R \otimes \chi_R \mid R \leq V,\ \dim R = k - 1\}$.
The functions $\chi_R$ are linearly independent by~\Cref{lem:grassmann-modules}. Moreover, for every $A \in X$, a complement of $A$ has dimension $N > k$ and contains two distinct $k$-subspaces, both of which are neighbours of $A$. Therefore~\Cref{lem:diagonal-feature-rigidity} applies and proves tameness. The connected non-bipartite graph $KG_q(n, k)$ is a spanning subgraph of $\overline{J_q(n, k)}$. Hence the latter graph is connected and non-bipartite. By~\Cref{lem:delsarte-iff-schrijver}, this Delsarte-rigid representation is Schrijver-rigid, and~\Cref{thm:criterion} applies.
\end{proof}

\begin{proposition}
    If $k \geq 3$, then $\overline{J_q(n, k)}$ has no Hoffman-rigid representation. For $k = 2$, the disjointness
representation in~\Cref{prop:grassmann-complement} is Hoffman-rigid.
\end{proposition}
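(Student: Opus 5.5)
The argument mirrors \Cref{Prop:Delsarte necessary}, with binomial coefficients replaced by Gaussian ones. By \Cref{rem:hoffman-tightness}, a Hoffman-rigid representation forces Hoffman's ratio bound to be tight. For $k\geq3$ it therefore suffices to compute $\alpha(\overline{J_q(n,k)})$ and the Hoffman bound and show that the latter is strictly larger. For $k=2$ the graph $\overline{J_q(n,2)}$ coincides with $\KG_q(n,2)$, since $\dim(A\cap B)\leq 0$ means $A\cap B=0$. The representation of \Cref{prop:grassmann-complement} then uses $1$-dimensional subspaces as features, so it is exactly the representation $U\mapsto\mathcal P(U)$ of \Cref{thm:qKneser}, which is Hoffman-rigid.

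To get the independence number for $k\geq3$, take a family $\mathcal F$ of $k$-subspaces with pairwise intersections of dimension $k-1$. Fix distinct $A,B\in\mathcal F$. Either every member contains $A\cap B$ (a star), or every member lies in $A+B$ (a $(k+1)$-space). Stars have size $[N+1]_q$ with $N=n-k$, and families inside a $(k+1)$-space have size at most $[k+1]_q$. Since $N>k$, we get $\alpha=[N+1]_q=\mu v$; this is consistent with the fibres $S_R$ of \Cref{prop:grassmann-complement}.

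For the Hoffman bound, the adjacency matrix is $J-I-A_1$, so its eigenvalue on $E_j\C^X$ is $v\delta_{j0}-1-P_{j1}$. The Grassmann graph has eigenvalues $P_{j1}=q^{j+1}[k-j]_q[N-j]_q-[j]_q$ (from \cite[\S9]{godsil2016erdos}), and these decrease strictly in $j$. Hence the least eigenvalue of the complement is attained only at $j=1$, giving $\tau=-1-P_{11}$. The degree is $d=v-1-q[k]_q[N]_q$. The Hoffman bound is then $v(-\tau)/(d-\tau)$, and the required inequality is
\[
\frac{v(-\tau)}{d-\tau}>[N+1]_q \iff -\tau\bigl(v-[N+1]_q\bigr)>d\,[N+1]_q .
\]

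Proving this inequality for $k\geq3$ is the main obstacle. I would substitute the expressions for $\tau$ and $d$, use $v=[N+k]_q\cdots[N+1]_q/[k]_q!$, and compare leading powers of $q$. As a consistency check, at $k=2$ the two sides should agree exactly. For $k\geq3$, $v$ grows like $q^{k N}$ up to lower order while $[N+1]_q$ grows like $q^{N}$, so the left side should dominate. If this asymptotic comparison becomes messy, the fallback is the route the Johnson case suggests: show that the Hoffman ratio simplifies to $v(-\tau)/(v-[n]_q)$ or a similar expression, and then compare it with $[N+1]_q$ directly.
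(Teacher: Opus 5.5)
Your outline follows the paper's proof step for step. Hoffman-rigidity forces the ratio bound to be tight. The star versus $(k+1)$-space dichotomy gives $\alpha(\overline{J_q(n,k)})=[N+1]_q$. The least eigenvalue of the complement is $\tau=-1-P_{11}=-q^2[k-1]_q[N-1]_q$. For $k=2$, the graph is $\KG_q(n,2)$ and carries the $q$-Kneser representation. The one step you leave open is the strict inequality for $k\geq3$. The paper also only asserts it, after simplifying $d-\tau=v-[n]_q$. The route you suggest for it would not close it as stated.

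Your heuristic compares the wrong quantities. The degree $d=v-1-q[k]_q[N]_q$ is itself of order $v$, so the growth of $v$ against $[N+1]_q$ cancels out of your inequality. Also, a comparison of leading powers of $q$ is not a proof for a fixed small $q$ such as $q=2$. What matters is $-\tau$ against $[N+1]_q$, and that comparison is exact and short.

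Since $q[k]_q[N]_q>q^2[k-1]_q[N-1]_q$, you have $0<d-\tau<v$, and hence $v(-\tau)/(d-\tau)>-\tau$. For $k\geq3$, use $[k-1]_q\geq[2]_q=1+q$, $[N+1]_q=1+q+q^2[N-1]_q$, and $q^3[N-1]_q>1+q$:
\[
-\tau=q^2[k-1]_q[N-1]_q\geq q^2[N-1]_q+q^3[N-1]_q>q^2[N-1]_q+1+q=[N+1]_q .
\]
So the Hoffman bound strictly exceeds $\alpha=[N+1]_q$, and \Cref{rem:hoffman-tightness} rules out a Hoffman-rigid representation. With this step inserted your argument is complete. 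The simplification of $d-\tau$ to $v-[n]_q$ is not needed.
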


\begin{proof}
    An independent family in $\overline{J_q(n, k)}$ is a family of $k$-spaces meeting pairwise in dimension at least $k-1$. Consider distinct $k$-spaces $U, V$ in any such family $\mathcal{F}$. Then, for every $Z \in \mathcal{F}$, there are two cases: if $Z$ contains $U \cap V$ then it has maximum size $[N + 1]_q$. On the other hand, if this is not true, then $Z \cap U$ and $Z \cap V$ span $Z$, so for basis $\{u_i~ |~ i \in [k]\}$ of $U$ and basis $\{v_i~ |~ i \in [k]\}$ of $V$ we get $Z \leq \Span\{\{u_i~ |~ i \in [k]\} \cup \{v_i~ |~ i \in [k]\}\}$ and its size is $[k + 1]_q$.
    It follows that
    \[\alpha(\overline{J_q(n,k)}) = \max\{[N + 1]_q, [k + 1]_q\} = [N + 1]_q.\]
    Furthermore, we write $P_{ij1}$ for the eigenvalues on spaces $E_j$ for graph $J_q(n,i)$.
    From \cite[Chapter 9]{godsil2016erdos}, the Grassmann graph has the following eigenvalues on $E_j$:
    \begin{align}
        P_{kj1}    &= P_{(k-1)j1} + [n - k + 1]_q - [k]_q \nonumber\\
        &= P_{jj1} + \sum_{i = j+1}^k[n - i + 1]_q - [i]_q \nonumber\\
        &= -[j]_q + \frac{q^{N - j} - 1}{q-1}q^{j+1}[k-j]_q \nonumber\\
        &= q^{j+1}[k - j]_q[N - j]_q -[j]_q. \nonumber
    \end{align}
    These strictly decrease with $j$. Thus, the complement has degree $v - 1 - q[k]_q[N]_q$.

    Since the adjacency matrix of $\overline{J_q(n,k)}$ is $J - I - A_1$, where $J = vE_0$, its eigenvalue on $E_j\mathbb{C}^X$ is $v\delta_{j,0} - 1 - P_{j,1}$. Hence, the least eigenvalue of $\overline{J_q(n,k)}$ is attained only on $E_1\mathbb{C}^X$ and equals $-q^2[k-1]_q[N-1]_q$.
    Therefore, the Hoffman bound of $\overline{J_q(n,k)}$ is
    \[
    \frac{vq^2[k- 1]_q[N-1]_q}{v-1-q[k]_q[N]_q + q^2[k-1]_q[N-1]_q}
    = \frac{vq^2[k- 1]_q[N-1]_q}{v - [n]_q}.
    \]
    For $k = 2$, this equals $[N + 1]_q$, and the graph is $\KG_q(n,2)$, meaning that the representation in~\Cref{prop:grassmann-complement} is the representation from~\Cref{thm:qKneser}, which is Hoffman-rigid. For $k \geq 3$, the Hoffman bound is strictly larger than $[N + 1]_q$. Thus, the Hoffman bound is not tight, so $\overline{J_q(n,k)}$ admits no Hoffman-rigid representation by~\Cref{rem:hoffman-tightness}.
\end{proof}

\subsection{The Hamming scheme}
\label{sec:hamming-scheme}
For $d\geq1$ and $q\geq2$, let $X\coloneqq [q]^d$. For $x,y\in X$, define their Hamming distance 
\[ h(x,y) = |\{i\in[d] \mid x_i\ne y_i\}|. \]

The Hamming scheme on $X$, denoted by $\mathfrak X$, has relation classes indexed by Hamming
distance. The distance-one graph is the Hamming graph $H(d,q)$, in which two words are adjacent
when they differ in exactly one coordinate.
We consider its complement $G_{d,q}\coloneqq \overline{H(d,q)}$,  where two words
are adjacent precisely when their Hamming distance is at least two. Since $G_{d,q}$ has edge set $\bigcup _{i=2}^d R_i$, it arises from the Hamming scheme.

\begin{remark}
    In the maximum-distance graph $X_d(q)$ of the Hamming scheme, two words $x,y\in X$ are adjacent if, and only if, $x_i\neq y_i$ for all $i\in[d]$. Thus, $X_d(q)\cong (K_q)^d$. This graph is homomorphically equivalent to $K_q$ (see~\cite[10.9.1] {godsil2016erdos}), which  for $q\geq 3$ has a commutativity gadget and is $\RE$-complete by~\cite{culf2025existence}.%
\end{remark}

\subsubsection{Complements of Hamming graphs}
\label{subsec:hamming-complement}

\begin{figure}[t]
  \centering
 \includegraphics[width=.75\linewidth]{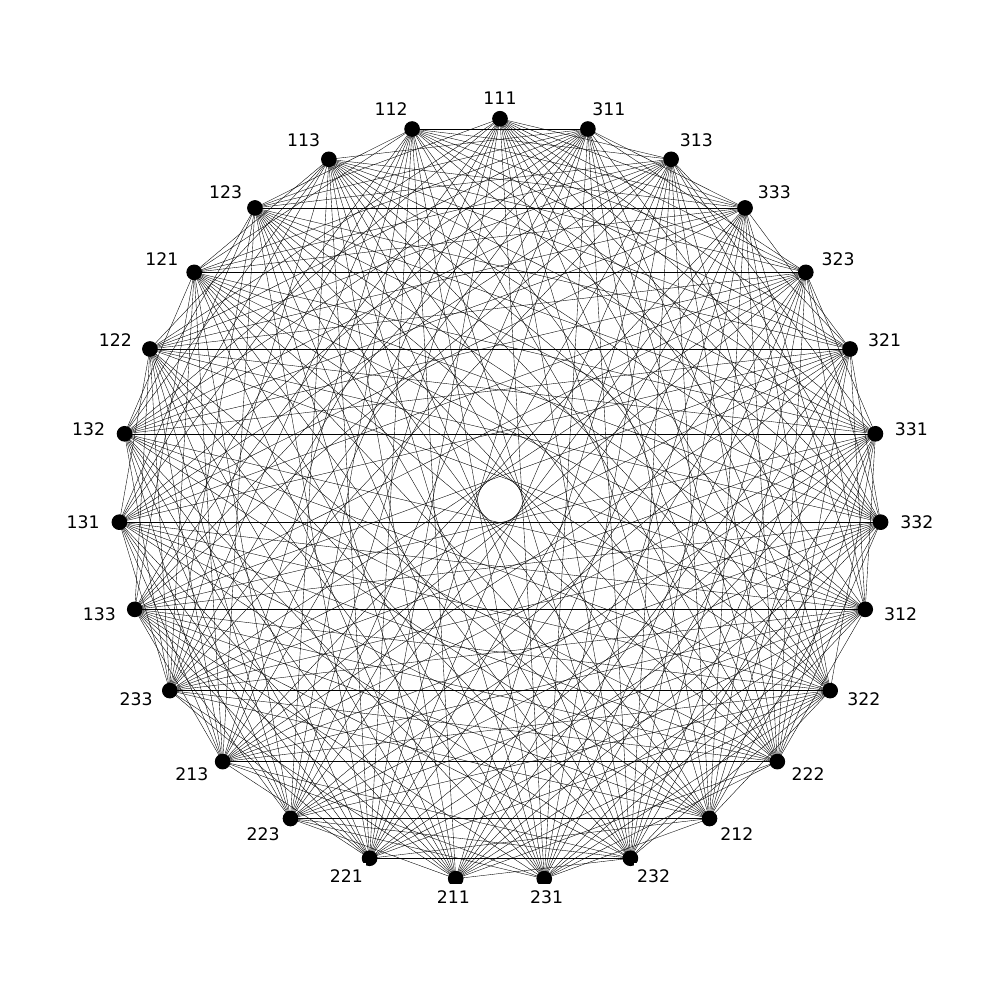}
  \caption{The complement $\overline{H(3,3)}$ of the Hamming graph $H(3,3)$.  The label $x_1x_2x_3$ denotes the word $(x_1,x_2,x_3)\in\{1,2,3\}^3$, and two distinct words are adjacent precisely when their Hamming distance is at least $2$.  The graph has $27$ vertices and $270$ edges.}
  \label{fig:hamming-complement-h3-3}
\end{figure}

Write
\[
 \C^{[q]}=\C\1\oplus U,
 \qquad U=\{u \mid \langle u,\1\rangle=0\},
\]
and let $V_j=E_j\mathbb C^{X}$ be the sum of the tensor subspaces
having a factor $U$ in exactly $j$ coordinates.

\begin{theorem}
\label{thm:hamming-complement}
For every $d\geq2$ and $q\geq3$, the graph $G_{d,q}$ has a
tame Delsarte-rigid  representation.  Consequently, the graph admits a
commutativity gadget and the corresponding $\qCSP$ is $\RE$-complete. 
\end{theorem}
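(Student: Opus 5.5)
We will define the representation by \emph{axis-parallel lines}. A line in direction $i\in[d]$ is a set $\ell=\{x\in X \mid x_j=c_j\ \text{for all } j\neq i\}$ for a fixed $(c_j)_{j\neq i}$. Let $P$ be the set of all lines, so $|P|=dq^{d-1}$, and put $T_x=\{\ell\in P\mid x\in\ell\}$, so $r=d$. Two words share a line iff their Hamming distance is exactly one. Hence adjacent vertices of $G_{d,q}$ (distance $\geq 2$) have disjoint feature sets. Injectivity holds because $\bigcap T_x=\{x\}$. This gives $\mu=q^{1-d}$. The indicator of a line in direction $i$ is a function not depending on $x_i$. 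So $\incspace$ is the span of the functions independent of some coordinate, i.e.\ $\incspace=V_0\oplus\cdots\oplus V_{d-1}$. The representation is therefore scheme-compatible with $\Lambda=\{1,\dots,d-1\}$ and $\incspace=\ker E_d$. Connectedness and non-bipartiteness are easy: for $q\geq3$, $(K_q)^d$ is a spanning subgraph.

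\textbf{Delsarte-rigidity.} We proceed exactly as for $\overline{J(n,k)}$. Set $B=(\kappa+1)E_0+(\theta+1)E_d-I$ and impose that the coefficients of $A_0$ and $A_1$ vanish. Here $E_0=q^{-d}J$ and $E_d=(I-\tfrac1qJ)^{\otimes d}$, whose entry on a pair at distance $i$ is $q^{-d}(q-1)^{d-i}(-1)^i$. The two equations are
\[
\frac{\kappa+1}{q^d}+(\theta+1)\frac{(q-1)^d}{q^d}=1,\qquad \kappa+1=(\theta+1)(q-1)^{d-1}.
\]
Their solution is $\kappa+1=q^{d-1}$ and $\theta+1=(q/(q-1))^{d-1}$. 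For $q\geq3$ and $d\geq2$ we get $\kappa=q^{d-1}-1>1$ and $0\leq\theta<\kappa$. Hence \Cref{prop:bose-mesner-spectral-test} applies, and it also returns $\mu=1/q^{d-1}$, consistent with the count above.

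\textbf{Edge kernel via \Cref{lem:local-incidence-edge-kernel}.} Conditions (ii) and (iii) are immediate: each line has $q\geq3$ points, and two distinct points on a line share only that line. For (i), fix $y$. The non-neighbours of $y$ form the radius-one Hamming ball $C_y=\{y\}\cup\{y_{i,a}\}$, where $y_{i,a}$ changes coordinate $i$ to $a\neq y_i$. We must show that $f\in\ker E_d$ supported on $C_y$ lies in the $d$-dimensional span of the lines through $y$.

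As in the Johnson case, restrict $M=\frac{q^d}{(q-1)^d}E_d$ to $C_y$. Its entries are $1$ on the diagonal, $-1/(q-1)$ at distance one, and $1/(q-1)^2$ at distance two. Write $f(y)=a$ and $f(y_{i,a'})=u_{i,a'}$, and use the row sums $r_i=\sum_{a'}u_{i,a'}$. The equation at $y$ gives $a$ in terms of $\sum_i r_i$. The equations at $y_{i,a'}$ compared for two values $a',a''$ give $u_{i,a'}=u_{i,a''}$; here $q\geq3$ gives at least two such values, and the factor $1+1/(q-1)$ is nonzero. So $u_{i,\cdot}=\varphi_i$ and $\dim\ker\leq d$. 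Conversely, the $d$ line indicators through $y$ lie in this kernel and are linearly independent. This is the computation I expect to be the main obstacle; it is routine but needs care with the distance-two terms. The lemma then gives $\edgekernel=\Span\{\chi_\ell\otimes\chi_\ell\}$, so (R1) holds.

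\textbf{Tameness (R2).} Here \Cref{lem:diagonal-feature-rigidity} is \emph{not} available, since the $dq^{d-1}$ line indicators exceed $\dim\incspace=q^d-(q-1)^d$ and are dependent. So I would argue (R2) directly. Take $F=\sum_\ell c_\ell\chi_\ell\otimes\chi_\ell\in\diagonalSubspace$ with common diagonal $\delta$. Then $F(a,b)=c_\ell$ when $a,b$ lie at distance one on the line $\ell$, and $F(a,b)=0$ at distance $\geq2$. Assume $\Delta_x F=0$ for a fixed $x$.

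First, $N(x)$ contains two words at distance $\geq2$ from each other, so $\delta=0$. Next, let $\ell$ be a line not through $x$. It contains at least $q-1\geq2$ points at distance $\geq2$ from $x$, i.e.\ two neighbours $a,b$ of $x$ with $F(a,b)=c_\ell$, so $c_\ell=\delta=0$. Finally, let $\ell$ be a line through $x$ and take $y\in\ell\setminus\{x\}$. The diagonal entry at $y$ is $0=\delta=c_\ell+\sum_{\ell'\ni y,\ \ell'\neq\ell}c_{\ell'}=c_\ell$, because the other lines through $y$ miss $x$. Hence every $c_\ell=0$, so $F=0$, and $T$ is tame.

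Finally, \Cref{lem:delsarte-iff-schrijver} converts Delsarte-rigidity into Schrijver-rigidity, and \Cref{thm:criterion} gives the commutativity gadget and $\RE$-completeness.
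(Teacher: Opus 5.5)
Your proof is correct and follows essentially the same route as the paper: the same axis-parallel-line representation, the same certificate $B=q^{d-1}E_0+(q/(q-1))^{d-1}E_d-I$ fed into \Cref{prop:bose-mesner-spectral-test}, the edge kernel via \Cref{lem:local-incidence-edge-kernel}, and a direct three-step verification of (R2) (first $\delta=0$, then lines missing $x$, then lines through $x$). The only deviation is in hypothesis (i) of that lemma, where you compute the kernel of the principal submatrix of $E_d$ on the radius-one ball, as in the Johnson case; the paper instead pairs $f$ with the product vectors $\bigotimes_i(\indfunction{\{y_i\}}-\indfunction{\{a_i\}})\in V_d$ to obtain $f(y)=\sum_i f(y_1,\dots,a_i,\dots,y_d)$ directly, which is slightly shorter, but both arguments are valid.
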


\begin{proof}
Since $d\geq2$ and $q\geq 3$, the graph $(K_q)^d$ is connected and non-bipartite.
It is a
spanning subgraph of $G_{d,q}$, hence $G_{d,q}$ is connected and
non-bipartite.

For $x\in X$, define
\[x_{-i}=(x_j)_{j\in[d]\setminus\{i\}}, \qquad T_x=\{(i,x_{-i}) \mid i\in[d]\}. \]

The map $x\mapsto T_x$ is injective since $d\geq 2$.  Adjacent words agree in at most
$d-2$ coordinates, and hence have no common features.  Thus,
$x\mapsto T_x$ defines a disjointness representation with feature set
\[P=\{(i,a) \mid i\in[d],\ a\in[q]^{[d]\setminus\{i\}}\}
\]
and with
\begin{equation}
    \label{eqn_1411_0709}
 |T_x|=d,\qquad |P|=dq^{d-1},\qquad \mu=q^{-(d-1)}.                                          
\end{equation}
For $i\in[d]$ and $a\in[q]^{[d]\setminus\{i\}}$, the feature fibre and feature function of $T$ are
\[
 S_{i,a}=\{x\in[q]^d \mid x_{-i}=a\},
 \qquad
 \chi_{i,a}(x)=\indfunction{x\in S_{i,a}}.
\]
For a fixed $i\in[d]$, we have
\[ \Span\{  \chi_{i,a} \mid a\in [q]^{[d]\setminus\{i\}} \} = 
\C^{[q]} \otimes\dots \C^{[q]} \otimes \C\1\otimes \C^{[q]}\otimes\dots\otimes \C^{[q]}, \]
where the factor $\C\1$ is in the $i$-th position.
  Taking all $i$ gives
\[
                  \incspace=V_0\oplus\cdots\oplus V_{d-1}. 
\]
Since $V_0=\C\mathbf 1$, the representation is scheme-compatible with index set $\{1, \dots, d-1 \}.$

In order to show Delsarte-rigidity using~\Cref{prop:bose-mesner-spectral-test}, we construct a suitable matrix $B\in\C[\mathfrak X]$. 
Recall that $E_i$ is the projector onto $V_i$, and put
\[ B \coloneqq \kappa E_0 - \sum_{j=1}^{d-1}E_j + \theta E_d,  \]
where $\kappa$ and $\theta$ are to be determined in such a way that $\kappa >1, -1<\theta<\kappa,$ and 
\begin{equation} \label{eqn_0935_1609}
    B\in \Span\{A_i\mid i\in \{2,\dots, d \} \}.\end{equation}
Using $\sum_{i=0}^d E_i = I$, we can rewrite $B$ as
\[ B= (\kappa +1)E_0 + (\theta + 1) E_d - I. \]
Since $B\in\C[\mathfrak X]$ by definition,~\cref{eqn_0935_1609} is satisfied if $B$ vanishes on $R_0$ and $R_1$.  
If $x,y$ have Hamming distance $s$, then
\[
 (E_d)_{x,y}=q^{-d}(-1)^s(q-1)^{d-s}.
\]
Furthermore, $E_0=q^{-d}J$. Evaluating $B_{x,x}$ and $B_{x,y}$ for $x,y$ with $h(x,y)=1$ results in the constraints 
\begin{align*}
    0&=B_{x,x}=(\kappa+1)q^{-d} + (\theta+1)q^{-d}(q-1)^d-1, \\
    0&=B_{x,y}=(\kappa+1)q^{-d} - (\theta+1)q^{-d}(q-1)^{d-1}.
\end{align*}
Solving for $(\kappa+1)$ and $(\theta+1)$ yields
\[ B= q^{d-1}E_0+  \left(\frac q{q-1}\right)^{d-1}E_d-I,\]
which has eigenvalues $q^{d-1}-1, -1$ and $\left(\frac q{q-1}\right)^{d-1}-1$ on the spaces $V_0, V_1\oplus\cdots\oplus V_{d-1}$ and $V_d$, respectively.
Since $q\geq 3$ and $d\geq 2$, we have $q^{d-1} - 1>1$ and $q^{d-1} - 1 > \left(\frac q{q-1}\right)^{d-1} -1 > -1$.
Thus, $B$ satisfies the conditions of ~\Cref{prop:bose-mesner-spectral-test} with $\Lambda=\{1,\ldots,d-1\}$,
which proves
that the representation is Delsarte-rigid.

We next determine the edge kernel.
We claim that
\begin{equation}
\label{eqn_1506_0409}
 \{f\in\incspace \mid \operatorname{supp}(f)
                   \subseteq X\setminus N_{G_{d,q}}(y)\}
 =\Span\{\chi_{i,y_{-i}} \mid i\in[d]\}.                        
\end{equation}
The inclusion from right to left is immediate. For the reverse inclusion, fix $y\in[q]^d$ and let
$f\in\incspace$ vanish outside $X\setminus N_{G_{d,q}}(y)$.  For
arbitrary $a_i\ne y_i$, the identity
\[
 \left\langle f,
   \bigotimes_{i=1}^d
   \bigl(\indfunction{\{y_i\}}-\indfunction{\{a_i\}}\bigr)
 \right\rangle=0
\]
follows from $\incspace=V_0\oplus\cdots\oplus V_{d-1}=V_d^\perp$. 
Expanding it and using that $f(x)=0$ whenever $h(y,x)\geq 2$ yields
\begin{equation}
    \label{eqn_1508_0409}
 f(y)=\sum_{i=1}^d
 f(y_1,\ldots,y_{i-1},a_i,y_{i+1},\ldots,y_d).             
\end{equation}
Varying one $a_i$ at a time shows that $f$ is constant on each
set of the form $S_{i, y_{-i}}\setminus\{y\}$, and~\cref{eqn_1508_0409} states that
its value at $y$ is the sum of those $d$ constants.  This proves~\cref{eqn_1506_0409}.

Every feature fibre has $q\geq3$ vertices, and two distinct words in
the feature fibre %
share exactly that feature.  Hence
\Cref{lem:local-incidence-edge-kernel}, applied to~\cref{eqn_1506_0409}, gives
\[
 \edgekernel
 =\Span\{\chi_{i,a}\otimes\chi_{i,a} \mid (i,a)\in P\}.         
\]
In particular, every element of $\edgekernel$ is symmetric,
establishing~(\ref{tag:R1}).%

It remains to prove~\textup{(\ref{tag:R2})}.  Expand $F$ as
\[
 F=\sum_{(i,a)\in P}\gamma_{i,a}\,
       \chi_{i,a}\otimes\chi_{i,a}\in\diagonalSubspace.
\]
Recall from~\cref{eqn_18_08_1510} that $\Delta_z\colon \diagonalSubspace\rightarrow\C^{N(z)\times N(z)}$ is given by $\Delta_z(F)_{a,b}=F(a,a)-F(a,b)$. Suppose that
$\Delta_z(F)=0$, and let $\lambda$ be the common
diagonal value of $F$.  There exist $u,v\in N(z)$ with
$h(u,v)=2$: for $i=1,2$ choose $u_i\neq v_i$ distinct from $z_i$,
and leave all other coordinates equal to those of $z$.
Since $u\sim v$,
\[
       0=F(u,v)=F(u,u)=\lambda.                            
\]
Now take a feature fibre $S_{i,a}$ not containing $z$. Choose two distinct points $x,y\in S_{i,a}$ such that $x_i\neq z_i\neq y_i$. 
This is possible since $q\geq 3$. Because $z\notin S_{i,a}$, there is at least one other coordinate $j\neq i$ where $x$ and $y$ differ from $z$. 
We thus obtain distinct
$x,y\in S_{i,a}\cap N(z)$, and they share exactly the feature
$(i,a)$.  Therefore, evaluating $F$ at $(x,y)$ gives
\[
             \gamma_{i,a}=F(x,y)=F(x,x)=\lambda=0.         
\]
Only the $d$ feature fibres containing $z$ can remain.  Changing only coordinate
$i$ of $z$ produces a word that is contained in exactly the $i$-th one
of those fibres.  Evaluating its zero diagonal therefore kills the
 coefficient of the corresponding feature function.  Thus $F=0$, and $\Delta_z$ is
injective.

By~\Cref{lem:delsarte-iff-schrijver}, the Delsarte-rigid representation is
Schrijver-rigid, so~\Cref{thm:criterion}  proves  that the graph admits a commutativity gadget.
\end{proof}

\begin{remark}
In~\Cref{thm:hamming-complement},
we cannot use~\Cref{lem:diagonal-feature-rigidity} to prove tameness of the representation since the feature functions are linearly dependent. Indeed, 
\[ \sum_{ a\in [q]^{[d]\setminus \{1\}}} \chi_{1, a} = \1 =    \sum_{ a\in [q]^{[d]\setminus \{2\}}} \chi_{2, a} .\]
\end{remark}

\begin{proposition}
If $d\geq3$, then  $G_{d,q}$ has no Hoffman-rigid
representation. For $d=2$, the disjointness representation in~\Cref{thm:hamming-complement}
 is Hoffman-rigid. 
\end{proposition}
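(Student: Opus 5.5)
The plan mirrors the two analogous propositions for the Johnson and Grassmann complements. First I determine $\alpha(G_{d,q})$. An independent set of $G_{d,q}$ is a family of words at pairwise Hamming distance at most one. If such a family contains two words $x,y$ differing in coordinate $i$, then every other member $z$ must agree with $x_{-i}$ on all coordinates other than $i$. Otherwise $z$ would differ from $x$ in some coordinate $j\neq i$, and then, being at distance at most one from both, it would have $z_i=x_i$ and $z_i=y_i$, which is impossible. Hence every independent set lies in a fibre $S_{i,a}$, so $\alpha(G_{d,q})=q$. This matches $\mu v=q^{-(d-1)}q^d=q$ from~\eqref{eqn_1411_0709}.

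Next, I compute the Hoffman bound. The Hamming graph $H(d,q)$ has eigenvalue $P_{j1}=d(q-1)-qj$ on $V_j$, which is strictly decreasing in $j$. The adjacency matrix of $G_{d,q}$ is $J-I-A_1$ with $J=vE_0$, so its eigenvalue on $V_j$ is $v\delta_{j,0}-1-P_{j1}$. The degree is therefore $q^d-1-d(q-1)$, and the least eigenvalue is attained only on $V_1$ and equals $\tau=-1-(d-1)(q-1)+1=-(d-1)(q-1)$. (I still need to check that $j=1$ gives the minimum among $j\geq1$; this holds because $-1-P_{j1}=qj-1-d(q-1)$ increases with $j$.) The Hoffman bound is
\[
\frac{-\tau v}{d_G-\tau}=\frac{(d-1)(q-1)q^d}{q^d-1-d(q-1)+(d-1)(q-1)}=\frac{(d-1)(q-1)q^d}{q^d-q}.
\]
For $d=2$ this equals $(q-1)q^2/(q^2-q)=q$, so the bound is tight. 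For $d\geq3$, comparing it with $q$ amounts to checking $(d-1)(q-1)q^{d-1}>q^d-q$, that is, $(d-1)(q-1)q^{d-2}>q^{d-1}-1$. This holds because $(d-1)(q-1)\geq 2(q-1)\geq q$ when $q\geq2$, so the left side is at least $q^{d-1}>q^{d-1}-1$. Hence for $d\geq3$ the Hoffman bound exceeds $\alpha$, and by~\Cref{rem:hoffman-tightness} no Hoffman-rigid representation exists.

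For $d=2$, the representation of~\Cref{thm:hamming-complement} has index set $\{1,\dots,d-1\}=\{1\}$. By the eigenvalue computation above, $\Lambda_\tau=\{1\}$. Therefore the representation is Hoffman-rigid by~\Cref{def:hoffman-feature}.

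The step I expect to need the most care is the classification of the independent sets, together with confirming the eigenvalue formula $P_{j1}=d(q-1)-qj$ from the standard theory of the Hamming scheme; the rest is arithmetic.
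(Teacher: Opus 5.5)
Your proposal is correct and follows the paper's proof: the same spectral computation (degree $q^d-1-d(q-1)$, least eigenvalue $\tau=-(d-1)(q-1)$ attained only on $V_1$), the same Hoffman bound $\frac{(d-1)(q-1)q^d}{q^d-q}$, and the same appeal to \Cref{rem:hoffman-tightness} for $d\geq3$ and to $\Lambda_\tau=\{1\}$ for $d=2$. The one difference is in proving $\alpha(G_{d,q})\leq q$: you show directly that every independent set lies in a fibre $S_{i,a}$, which is self-contained and also identifies the maximum independent sets, whereas the paper reads the bound off the dual $\vartheta^-$ certificate at value $\mu|X|=q$ from the Delsarte-rigidity in \Cref{thm:hamming-complement}; you also explicitly verify the inequality for $d\geq3$ that the paper only asserts.
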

\begin{proof}  The degree of $G_{d,q}$
and its least adjacency eigenvalue are
\[
 D=q^d-1-d(q-1),\qquad
 \tau=-(d-1)(q-1).                                        
\]
Indeed, the distance-one Hamming graph
has eigenvalue
$d(q-1)-qj$ on $V_j$ (see \cite{godsil2016erdos}). Taking the complement of a regular graph changes every nonprincipal
eigenvalue $\theta$ to $-1-\theta$, since $A(G^c)=J-I-A(G)$.  Thus Hoffman's bound is

\begin{equation}
    \label{eqn_1413_0709}
 \alpha(G_{d,q}) \leq \frac{q^d(d-1)(q-1)}{q^d-q}.                 
\end{equation}

A feature fibre has size $q$, while the associated dual
$\vartheta^-$ certificate and
\cref{eqn_1411_0709} give $\alpha(G_{d,q})\leq\mu|X|=q$.  Hence
$\alpha(G_{d,q})=q$.
The right-hand side of~\cref{eqn_1413_0709} equals $q$ when $d=2$ and is
strictly larger than $q$ when $d\geq3$.    By
\Cref{rem:hoffman-tightness}, no Hoffman-rigid  representation exists
in the latter case. When $d=2$, then $\incspace = V_0\oplus V_1$ and $V_1$ is the least adjacency eigenspace. Therefore, the representation in \Cref{thm:hamming-complement} is Hoffman-rigid.  
\end{proof}

\addcontentsline{toc}{section}{References}
\printbibliography

\end{document}